\newtheorem{theorem}{Theorem}
\newtheorem{remark}{Remark}
\newtheorem{proposition}{Proposition}
\let\ds\displaystyle
\let\scr\mathscr
\let\goth\mathfrak
\def\zT{{\vphantom{\widetilde T} T}}
\def \Liminf{\mathop{\underline{\lim}}\limits}
\def\AA{\mathbb{A}}
\def\Pb{\mathbf{P}}
\def\Ex{\mathbf{E}}
\def\Pb{\mathbf{P}}
\def\KK{\mathbb{K}}
\def\UU{\mathbb{U}}
\def\1{\mbox{1\hspace{-.25em}I}}
\begin{document}
\title{Hidden Ergodic Ornstein-Uhlenbeck Process and Adaptive Filter}
\author{ \textsc{Yury A. Kutoyants}\\ {\small Le Mans University, Le Mans,
    France }\\
{\small     National Research University ``MPEI'', Moscow, Russia }\\ 
{\small Tomsk     State University, Tomsk, Russia }\\
 }

\date{}

\maketitle
%{Running title : }
\begin{abstract}
 The model of partially observed linear stochastic differential equations
 depending on some unknown parameters is considered. An approximation of the
 unobserved component is proposed. This approximation is realized in three
 steps. First an estimator of the method of moments of unknown parameter is
 constructed. Then this estimator is used for defining the One-step
 MLE-process and finally the last estimator is substituted to the equations of
 Kalman-Bucy (K-B) filter. The solution of obtained K-B equations provide us
 the approximation (adaptive K-B filter).  The asymptotic properties of all
 mentioned estimators and MLE and Bayesian estimators of the unknown
 parameters are described. The asymptotic efficiency of the proposed adaptive
 filter is shown.
\end{abstract}
\noindent MSC 2000 Classification: 62M02,  62G10, 62G20.

\bigskip
\noindent {\sl Key words}: \textsl{Partially observed linear system, hidden
  Markov process, Kalman-Bucy filter, parameter estimation, method of moments
  estimators, MLE and Bayesian estimators,  One-step MLE-process, on-line
  approximation, adaptive filter.}

\section{Introduction}

We are given  a  linear partially observed system
 \begin{align}
\label{1}
{\rm d}X_t&=f\left(\vartheta,t \right)\,Y_t\,{\rm d}t+\sigma\left(t\right)\, {\rm d}W_t,\qquad\qquad 
\; X_0,\quad \qquad t\geq 0,\\
 {\rm d}Y_t&=-a\left(\vartheta,t
\right)\,Y_t\,{\rm d}t+b\left(\vartheta,t \right)\,{\rm d}V_t,\qquad\quad
Y_0,\qquad\quad t\geq 0,
\label{2}
\end{align}
where the Wiener processes $W_t,t\geq 0$ and $V_t,t\geq 0$ are
independent. The observations are $X^T=\left(X_t,0\leq t\leq T\right)$ and the
Ornstein-Uhlenbeck process $Y^T=\left(Y_t,0\leq t\leq T\right)$ is {\it
  hidden}. The initial values $X_0\sim {\cal N}\left(0,d_x^2\right), Y_0\sim
{\cal N}\left(0,d_y^2\right)$ do not depend of Wiener processes. The functions
$f\left(\vartheta,t \right),\sigma \left(t\right),a\left(\vartheta,t
\right),b\left(\vartheta,t \right) $ are supposed to be known and smooth
w.r.t. the unknown parameter $\vartheta \in\Theta\subset {\cal R}^d,d\geq 1 $. 

It is known since 1961 \cite{KB61} that the process $m\left(\vartheta
,t\right)=\Ex_\vartheta \left(Y_t|X_s,0\leq s\leq t\right) $,$0\leq t\leq T$
(with known $\vartheta $)
satisfies the equations of Kalman-Bucy filtering, (see, e.g.  \cite{LS01})
\begin{align}
\label{3}
{\rm d}m\left(\vartheta ,t\right)&=-\left[a\left(\vartheta,t
  \right)+\frac{\gamma \left(\vartheta ,t\right)f\left(\vartheta,t
    \right)^2}{\sigma\left(t\right) ^2} \right] m\left(\vartheta ,t\right){\rm
  d}t+\frac{\gamma \left(\vartheta ,t\right)f\left(\vartheta
  ,t\right)}{\sigma\left(t\right)   ^2} {\rm d}X_t,\\
\label{4}
\frac{\partial \gamma \left(\vartheta ,t\right)}{\partial
  t}&=-2a\left(\vartheta,t \right)\gamma \left(\vartheta ,t\right)-\frac{\gamma
  \left(\vartheta ,t\right)^2f\left(\vartheta ,t\right)^2}{\sigma\left(t\right)
  ^2}+b\left(\vartheta,t \right)^2 ,
\end{align}
with the initial values  $m_0= \Ex_\vartheta\left( Y_0|X_0\right)$ and
$\gamma _0=\Ex_\vartheta \left(Y_0-m\left(\vartheta ,0\right)\right)^2 $.

We are interested by the problem of on-line estimation of the conditional
expectation (random function) $m\left(\vartheta ,t\right), 0<t\leq T$ in the
different situations, where $\vartheta $ is unknown. For example,
$f\left(\vartheta,t \right)=\vartheta $ and $a\left(\vartheta,t \right)=a$,
$b\left(\vartheta,t \right)=b$, where $a$ and $b$ are known.  The usual
behavior in such situations is to estimate first the unknown parameters and
then to substitute these estimators in the equations \eqref{3}-\eqref{4}. Due
to importance of such models in a large diversity of applied problems there
exists a wide literature devoted to adaptive filtering but mainly for discrete
time versions of the model \eqref{1}-\eqref{2}, see, e.g., \cite{BC93},\cite{BRR85},
\cite{Hay14}, \cite{Liung79},\cite{Meh70},\cite{S08} and references
therein. There is a large diversity of the models (linear and non linear) and
the methods of adaptive filtering. The most interesting are of course the
algorithms of on-line adaptive filters. The studied algorithms are mainly
verified with the help of numerical simulations, which show the reasonable
behavior of the adaptive filters.  The presented here continuous time
partially observed systems observed in continuous time were less studied. The
asymptotic properties (consistency, asymptotic normality, convergence of
moments and asymptotic efficiency) of the MLE for the homogeneous model
\begin{align}
\label{5}
{\rm d}X_t&=f\left(\vartheta \right)\,Y_t\,{\rm d}t+\sigma \, {\rm
  d}W_t,\qquad\qquad  \quad 
\; X_0,\quad \qquad 0\leq t\leq T,\\
 {\rm d}Y_t&=-a\left(\vartheta \right)\,Y_t\,{\rm d}t+b\left(\vartheta \right)\,{\rm d}V_t,\qquad\qquad
Y_0,\qquad\quad t\geq 0,\label{6}
\end{align}
 were described in the works \cite{Kut84} (estimation of
 $f\left(\vartheta\right) =f= \vartheta$, i.e., $a\left(\vartheta \right)=a,
 b\left(\vartheta \right)=b$, where $a$ and $b$ are known and $f$ is unknown
 parameter) and \cite{KS91} (estimation of $\vartheta =\left(f,a\right))$. See as
 well \cite{BaB84}, where the consistency of the MLE in the case of infinite
 dimensional linear systems was established. Remark that in \cite{Kut04} the
 properties of the MLE of all one-dimensional parameters for the model
 \eqref{5}-\eqref{6} ($\vartheta =f$, $\vartheta =a$, $\vartheta =b$) were
 described too. Another approach called EM (expectation-maximization) was
 developped in \cite{DZ86}. 

Note as well that the consistent estimation of the parameters $\vartheta
=\left(f,b\right)$ or $\vartheta =\left(f,a,b\right) $ is impossible because
the model \eqref{5}-\eqref{6} depends on the product $fb$.
 
The considerd in this work model is a particular case of the large class of
hidden Markov processes. Some general results concerning asymptotic behavior
of estimators can be found in \cite{BRR98} and \cite{CMR05}.

Our goal is to obtain a good recurrent approximation $m_t^\star,0<t\leq T$ of
the process $m\left(\vartheta ,t\right),0<t\leq T$ in the case of the
homogeneous partially observed system \eqref{5}-\eqref{6} and to discuss the
question of asymptotic efficiency of adaptive filters.

The equations \eqref{3}-\eqref{4} became
\begin{align}
\label{7}
{\rm d}m\left(\vartheta ,t\right)&=-\left[a\left(\vartheta
  \right)+\frac{\gamma \left(\vartheta ,t\right)f\left(\vartheta
    \right)^2}{\sigma ^2} \right] m\left(\vartheta ,t\right){\rm
  d}t+\frac{\gamma \left(\vartheta ,t\right)f\left(\vartheta \right)}{\sigma
  ^2} {\rm d}X_t,\qquad m\left(\vartheta ,0\right)\\
\label{8}
\frac{\partial \gamma \left(\vartheta ,t\right)}{\partial
  t}&=-2a\left(\vartheta \right)\gamma \left(\vartheta ,t\right)-\frac{\gamma
  \left(\vartheta ,t\right)^2f\left(\vartheta \right)^2}{\sigma
  ^2}+b\left(\vartheta \right)^2 ,\qquad \quad \gamma \left(\vartheta ,0\right).
\end{align}

 We have to emphasis,
that this model  can be considered as the simplest example
of models studied in Kalman-Bucy filtering theory. The processes $X_t,t\geq 0$
and $Y_t,t\geq 0$ are usually vectors and the Riccati equation \eqref{4} is
written for matrices \cite{Jaz70}, \cite{LS01}, \cite{Hay14},\cite{S08}. We suppose that
the proposed in this work algorithms can 
be developed for more general models and the only obstacle there can be the
construction of the preliminary estimators. If preliminary consistent
estimators are found then the One-step MLE-processes and equations for
adaptive filters can be written relatively easy.

We need a good estimator of $\vartheta $, which can be easily calculated for
each $t$ and can be used for construction of $m_t^\star,0<t\leq T$. Note
that this can not be the MLE $\hat\vartheta _t, 0<t\leq T$  or Bayesian estimator (BE)
$\tilde\vartheta _t, 0<t\leq T$ defined by the relations 
\begin{align}
\label{MLE}
L(\hat{\vartheta }_t,X^t )=\sup_{\vartheta\in \Theta }L\left(\vartheta,X^t
\right),\qquad \qquad \tilde\vartheta _t=\frac{\int_{\Theta }^{}\vartheta
  p\left(\vartheta \right)L\left(\vartheta,X^t \right){\rm
    d}\vartheta}{\int_{\Theta }^{} p\left(\vartheta
  \right)L\left(\vartheta,X^t \right){\rm d}\vartheta},
\end{align}
where the likelihood ratio function $L\left(\vartheta,X^t
\right),\vartheta \in\Theta  $  is
\begin{align}
\label{LR}
L\left(\vartheta,X^t \right)= \exp\left\{ \int_{0}^{t}\frac{f\left(\vartheta
    \right)m\left(\vartheta,s\right)}{\sigma ^2}\,{\rm d}X_s -
  \int_{0}^{t}\frac{f\left(\vartheta
   \right)^2m\left(\vartheta,s\right)^2}{2\sigma^2}{\rm d}s
  \right\} 
\end{align}
because to solve equations \eqref{3}-\eqref{4} for all (or too many)
$\vartheta $ and to all $t\in (0,T]$ is numerically difficult realizable. Here
  $p\left(\vartheta \right), \vartheta \in\Theta $ is density a priory of the
  random variable $\vartheta $ (Bayesian approach with quadratic loss
  function).

The estimation of   $m\left(\vartheta ,t\right),t\in (0,T]$ in this work
 is realized following the  program: 

{\it
\begin{enumerate}
\item  Calculate a preliminary estimator $\bar\vartheta_\tau  $ on relatively small
  interval of observations $\left[0,\tau \right]$.
\item Using $\bar\vartheta_\tau  $ construct the  One-step MLE-process $\vartheta
  _t^\star, \tau <t\leq T $.
\item As approximation of $m\left(\vartheta ,t\right) $ we propose $m_t^\star
  $ obtained with the help of K-B equations, where $\vartheta $ is replaced by
  $\vartheta _t^\star, \tau <t\leq T $.

\item Estimate the error $m _t^\star-m\left(\vartheta
 ,t\right), \tau <t\leq T  $.
\end{enumerate}
}This means that we have no on-line approximation on the time interval
$\left[0,\tau \right]$, but $\tau /T\rightarrow 0$. Note that the used here
One-step MLE-process is the well-known Le Cam's One-step MLE, in which we 
consider the upper limit of the integral (time $t$) as variable.

We applied this construction  (steps 1-2,  or steps 1-4)
to 6 different models of observations. To Markov processes with discrete time
\cite{KM16} (steps 1-2), to the model of observations \eqref{1}-\eqref{2} with small
noises in the both equations \cite{Kut94}, \cite{KZ21} (1-4). To the model
\eqref{1}-\eqref{2} with small noise in the equation \eqref{1} only
\cite{Kut19a}, \cite{Kut22} (1-4). To the model of hidden telegraph process
\cite{KhK18} (1-2). To the discrete time model of Kalman filtering
\cite{Kut23b} (1-4) and to this model of observations in the one-dimensional
case\cite{Kut19b} (1-2).  In this work we consider the two-dimensional case
(1-2), the adaptive filter and study the error (3-4).

\section{Auxiliary results}

%\centerline
{\bf Solution of Riccati equation}

Let us denote
\begin{align*}
r\left(\vartheta \right)=\left(a\left(\vartheta
\right)^2+\frac{f\left(\vartheta \right)^2b\left(\vartheta \right)^2}{\sigma
  ^2}\right)^{1/2},\qquad\quad \gamma _*\left(\vartheta \right)= \frac{\sigma
  ^2\left[r\left(\vartheta \right)-a\left(\vartheta
    \right)\right]}{f\left(\vartheta \right)^2}.
\end{align*}
  Recall that in the case of constant parameters \eqref{5}-\eqref{6} the
  Riccati equation \eqref{8}  has an explicite solution \cite{A83}
\begin{align}
\label{2-10}
\gamma \left(\vartheta ,t\right)=e^{-2r\left(\vartheta
  \right)t}\left[\frac{1}{\gamma\left(\vartheta ,0\right)-\gamma _*\left(\vartheta
    \right)}+\frac{f\left(\vartheta \right)^2}{2r\left(\vartheta
    \right)\sigma ^2}\left(1-e^{-2r\left(\vartheta
    \right)t}\right)\right]^{-1}+\gamma _*\left(\vartheta \right),
\end{align}
where we suppose that $\gamma\left(\vartheta ,0\right)\not=\gamma _*\left(\vartheta
    \right) $.  According to \eqref{2-10}
 there exists a constant $C>0$ such that 
\begin{align*}
\left|\gamma \left(\vartheta ,t\right)-\gamma _*\left(\vartheta
\right)\right|\leq C\,e^{-2r\left(\vartheta \right)t}\longrightarrow 0,\qquad
      {\rm as}\quad t\longrightarrow \infty .
\end{align*}
This exponential convergence allows us in some
problems without loss of generality  consider the stationary regime supposing $\gamma\left(\vartheta
,0\right)=\gamma _*\left(\vartheta \right) $. This essentially simplifies the
calculations.

 If $m_0\sim {\cal N}\left(0,\gamma _*\left(\vartheta \right)\right)$, i.e.,
 the initial value $\gamma \left(\vartheta ,0\right)=\gamma
 _*\left(\vartheta \right)$, then we have the equalities
\begin{align*}
\frac{\partial \gamma \left(\vartheta ,t\right)}{\partial t}\equiv 0,\qquad     \gamma \left(\vartheta ,t\right)\equiv \gamma _*\left(\vartheta \right)\qquad {\rm
  for}\quad {\rm all}\quad t>0. 
\end{align*}
The value $\gamma _*\left(\theta \right)$ is a positive solution  $z$ of the equation
\begin{align*}
-2a\left(\vartheta \right)z -\frac{z^2f\left(\vartheta \right)^2}{\sigma ^2} +b\left(\vartheta \right)^2=0,
\end{align*}
which is obtained from  the   Riccati equation \eqref{8}:
\begin{align*}
-2a\left(\vartheta \right)\gamma _*\left(\vartheta \right) -\frac{f\left(\vartheta \right)^2\gamma _*\left(\vartheta
  \right)^2}{\sigma ^2} +b\left(\vartheta \right)^2=0.  
\end{align*}
Introduce one more notation $\Gamma \left(\vartheta \right)=\gamma
_*\left(\vartheta \right)\sigma ^{-2}f\left(\vartheta \right)^2 =r\left(\vartheta \right)-a\left(\vartheta \right) $.

In this case  the system \eqref{7}-\eqref{8} is reduced to one equation
\begin{align*}
{\rm d}m\left(\vartheta ,t\right)=-\left[a\left(\vartheta \right)+\gamma \left(\vartheta \right)\right]
  m\left(\vartheta ,t\right){\rm d}t+
\gamma \left(\vartheta \right)f\left(\vartheta \right)^{-1}{\rm d}X_t,\quad m\left(\vartheta ,0\right)=m_0.
\end{align*}

%\subsubsection
{\bf Stationary representations}

To simplify the
calculations we will use the representations (here $\vartheta _0$ is the true value)
\begin{align*}
m\left(\vartheta_0 ,t\right)&=m_0e^{-a_0\left(\vartheta_0 \right)t}+\frac{\gamma_*
  \left(\vartheta_0\right)f\left(\vartheta_0 \right)}{\sigma }\int_{0}^{t}e^{-a_0\left(\vartheta _0\right)\left(t-s\right)}
{\rm d}\bar W_s,\\ 
m_0&=\frac{\gamma_* \left(\vartheta_0\right)f\left(\vartheta_0\right)}{\sigma
}\int_{-\infty }^{0}e^{a_0\left(\vartheta_0\right)s} {\rm d}\bar W_s,
\end{align*}
where $\bar W_s,s\in {\cal R}$ is two-sided Wiener process. This means that we
added an independent Wiener process $\tilde W_s,s\geq 0$ and put $\bar
W_t=\tilde W_{-t},t\leq 0 .$   Recall that the {\it innovation } stochastic
process $\left\{\bar{W}_t,\; 
t\geq 0\right\} $ is  defined by the equation
$$
\sigma \;\bar{W}_t=X_t-X_0-\int_{0}^{t}M\left(\vartheta_0,s\right){\rm d}s
$$
 is a standard Wiener process \cite{LS01}.

If $\vartheta \not=\vartheta _0$, then the process $m\left(\vartheta
,t\right),t\geq 0$ has the stochastic differential
\begin{align*}
{\rm d}m\left(\vartheta ,t\right)=-r\left(\vartheta \right)  m\left(\vartheta ,t\right){\rm
  d}t+ \Gamma \left(\vartheta \right)f\left(\vartheta \right)^{-1}f\left(\vartheta _0\right)m\left(\vartheta
_0,t\right){\rm d}t+  \Gamma \left(\vartheta \right)f^{-1} \sigma{\rm
  d}\bar W_t ,\; m\left(\vartheta ,0\right).
\end{align*}

 It will be more convenient to work with the stationary  stochastic process $
M(\vartheta ,t)=f(\vartheta )m(\vartheta,t )$ satisfying the equation
\begin{align*}
{\rm d}M\left(\vartheta ,t\right)=-r\left(\vartheta \right)  M\left(\vartheta ,t\right){\rm
  d}t+ \Gamma \left(\vartheta \right)M\left(\vartheta
_0,t\right){\rm d}t+  \Gamma \left(\vartheta \right) \sigma{\rm
  d}\bar W_t ,\quad M\left(\vartheta ,0\right).
\end{align*}

To write its representation in the integral form we consider two cases
separately. First we suppose that $r\left(\vartheta \right)\not=a
\left(\vartheta _0\right)$, then
\begin{align}
\label{2-13}
M\left(\vartheta ,t\right)&=\Gamma \left(\vartheta \right)\int_{-\infty
}^{t}e^{-r\left(\vartheta\right)\left(t-s\right)}M\left(\vartheta
_0,s\right){\rm d}s +\Gamma \left(\vartheta \right)\sigma \int_{-\infty
}^{t}e^{-r\left(\vartheta\right)\left(t-s\right)}{\rm d}\bar W_s\nonumber\\
&=\Gamma \left(\vartheta \right)\Gamma \left(\vartheta_0 \right)\sigma \int_{-\infty
}^{t}e^{-r\left(\vartheta\right)\left(t-s\right)}   \int_{-\infty
}^{s}e^{-a\left(\vartheta _0\right)\left(s-q\right)}{\rm d}\bar W_q  \; {\rm d}s\nonumber \\
&\qquad\qquad +\Gamma \left(\vartheta \right)\sigma \int_{-\infty
}^{t}e^{-r\left(\vartheta\right)\left(t-s\right)}{\rm d}\bar W_s\nonumber\\
&=\frac{\Gamma \left(\vartheta \right)\Gamma \left(\vartheta_0 \right)\sigma
}{r\left(\vartheta \right)-a\left(\vartheta _0\right)}\int_{-\infty
}^{t}e^{-a\left(\vartheta _0\right)\left(t-s\right)}{\rm 
  d}\bar W_s\nonumber\\
&\qquad\qquad +\frac{\Gamma \left(\vartheta \right)\sigma \left(r\left(\vartheta
  \right)-r\left(\vartheta_0 \right)\right)}{r\left(\vartheta
  \right)-a\left(\vartheta _0\right) } \int_{-\infty  
}^{t}e^{-r\left(\vartheta\right)\left(t-s\right)}{\rm d}\bar W_s.
\end{align}
Here we used the relation (Fubini theorem)
\begin{align*}
\int_{-\infty }^{t}e^{-r\left(\vartheta\right)\left(t-s\right)} \int_{-\infty
}^{s}e^{-a\left(\vartheta _0\right)\left(s-q\right)}{\rm d}\bar W_q \; {\rm d}s &
=e^{-r\left(\vartheta \right)t}\int_{-\infty }^{t}e^{a\left(\vartheta _0\right)q} \int_{q }^{t}e^{
  \left(r\left(\vartheta \right)-a\left(\vartheta _0\right)\right)s}{\rm d}s \; {\rm d} \bar W_q\\ &
= \frac{1}{r\left(\vartheta \right)-a\left(\vartheta _0\right)}\int_{-\infty
}^{t}\left[e^{-a\left(\vartheta _0\right)\left(t-q\right)}-e^{-r\left(\vartheta
    \right)\left(t-q\right)}\right] {\rm d} \bar W_q.
\end{align*}
This representation corresponds to the initial value
\begin{align*}
M\left(\vartheta ,0\right)&=\frac{\Gamma \left(\vartheta \right)\Gamma \left(\vartheta_0 \right)\sigma
}{r\left(\vartheta \right)-a\left(\vartheta _0\right)}\int_{-\infty }^{0}e^{a\left(\vartheta _0\right)s}{\rm
  d}\bar W_s +\frac{\Gamma \left(\vartheta \right)\sigma \left(r\left(\vartheta
  \right)-r\left(\vartheta_0 \right)\right)}{r\left(\vartheta \right)-a\left(\vartheta _0\right) }
\int_{-\infty 
}^{0}e^{r\left(\vartheta\right)s}{\rm d}\bar W_s.
\end{align*}

If $r\left(\vartheta \right)=a\left(\vartheta _0\right)$, then
\begin{align*}
\int_{-\infty }^{t}e^{-a\left(\vartheta _0\right)\left(t-s\right)} \int_{-\infty
}^{s}e^{-a\left(\vartheta _0\right)\left(s-q\right)}{\rm d}\bar W_q \; {\rm
  d}s&=e^{-a\left(\vartheta _0\right)t}\int_{-\infty }^{t} \int_{-\infty
}^{s}e^{a\left(\vartheta _0\right)q}{\rm d}\bar W_q \; {\rm d}s\\
&=\int_{-\infty }^{t}e^{-a\left(\vartheta _0\right)\left(t-s\right)}\left(t-s\right){\rm d}\bar W_s
\end{align*}
and
\begin{align*}
M\left(\vartheta ,t\right)=\Gamma \left(\vartheta \right)\sigma \int_{-\infty
}^{t}e^{-a\left(\vartheta _0\right)\left(t-s\right)}\left[ \gamma \left(\vartheta_0 \right)
  \left(t-s\right)+1\right]{\rm d}\bar W_s.
\end{align*}

These expressions will allow us calculate some moments involving
$M\left(\vartheta ,t\right)$ and $ \dot M\left(\vartheta ,t\right)$. The dot
means the derivation w.r.t. $\vartheta $.

%\subsection
{\bf MLE and BE}

We are given a two-dimensional diffusion process $\left\{X_t,Y_t,\;t\geq 0\right\}$
\begin{align*}
&{\rm d}Y_t=-\,a\left({ \vartheta}\right) Y_t\,{\rm
  d}t+b\left({  \vartheta}\right)\;{\rm d}V_t,\qquad Y_0,\;\\
&{\rm d}X_t=\; f\left({  \vartheta}\right)Y_t\,{\rm d}t+\;\sigma\, {\rm
d}W_t,\quad \quad X_0=0 ,
\end{align*}
where $\left\{V_t,W_t,\; t\geq 0\right\}$ are two independent Wiener
 processes, the functions $a\left(\cdot \right)$, $b\left(\cdot \right)$,
 $f\left(\cdot \right)$ and the constant $\sigma\neq 0 $ are known and ${  \vartheta}$ is a
 finite-dimensional (unknown) parameter. The random variable  $Y_0$
 is ${\goth F}_0$-measurable  and  Gaussian.

Suppose as usual that only one component $X^T=\left\{X_t,\;0\leq t\leq
T\right\}$ is observed and we have to estimate the parameter ${
  \vartheta}$. Our goal is to study the estimators of this parameter in the
asymptotic of large samples $T\rightarrow \infty $.

This model of observations allows us  to study estimators in the  five
different situations:
\begin{description}
\item[F.]  $f\left(\vartheta \right)=\vartheta,\quad  a\left(\vartheta
\right)=a,\quad  b\left(\vartheta \right)=b  $,

\item[A.]  $f\left(\vartheta \right)=f,\quad  a\left(\vartheta
\right)=\vartheta ,\quad  b\left(\vartheta \right)=b  $,

\item[B.] $f\left(\vartheta \right)=f,\quad  a\left(\vartheta
\right)=a ,\quad  b\left(\vartheta \right)=\vartheta  $,

\item[AB.] $f\left(\vartheta \right)=f,\quad  a\left(\vartheta
\right)=\theta _1  ,\quad  b\left(\vartheta \right)=\theta _2 $ , i.e.,
$\vartheta =\left(\theta _1,\theta _2\right)^\top$. 

\item[AF.] $f\left(\vartheta \right)=\theta _2,\quad  a\left(\vartheta
\right)=\theta _1 ,\quad  b\left(\vartheta \right)=b$.

\end{description}

 The MLE   $\hat{\vartheta }_\zT$ and BE $\tilde \vartheta _\zT$ are defined   by
the relations
\begin{align}
\label{2-MleBe}
L(\hat{\vartheta }_\zT,X^T
)=\sup_{\vartheta\in \Theta }L\left(\vartheta,X^T \right),\qquad \qquad \tilde\vartheta
_\zT=\frac{\int_{\Theta }^{}\vartheta p\left(\vartheta \right)L\left(\vartheta,X^T \right){\rm
    d}\vartheta}{\int_{\Theta }^{} p\left(\vartheta \right)L\left(\vartheta,X^T \right){\rm d}\vartheta}.
\end{align}
where
\begin{align}
\label{2-LR-T}
L\left(\vartheta,X^T \right)= \exp\left\{ \int_{0}^{T}\frac{M\left(\vartheta,s\right)}{\sigma ^2}\,{\rm d}X_s -
  \int_{0}^{t}\frac{M\left(\vartheta,s\right)^2}{2\sigma^2}{\rm d}s
  \right\} .
\end{align}
Here $M\left(\vartheta ,t\right)=f\left(\vartheta \right)m\left(\vartheta
,t\right)$  and  $p\left(\cdot \right)$ is continuous positive density function on 
$\Theta$.

To construct the MLE of the parameter $\vartheta $ we have to calculate the
random function
$\left\{M\left(\vartheta,t\right),\;0\leq t\leq T\right\},\;\vartheta\in
\Theta  $  defined 
by the equation
 \begin{align*}
{\rm d}M\left(\vartheta,t\right)=-r\left(\vartheta
\right)M\left(\vartheta,t\right)\,{\rm 
d}t+\Gamma \left(\vartheta \right)
 {\rm d}X_t ,\qquad \quad M\left(\vartheta,0 \right).
\end{align*}

It will be convenient to consider the cases ${\bf F, A, B}$ and ${\bf AB, AF}$
separately. Therefore we study first the estimation of one-dimensional
parameter and then we consider the cases ${\bf AB, AF}$ (two-dimensional parameter). 

%\subsubsection{One-dimensional parameter}

 Suppose that the parameter $\vartheta $ is one-dimensional and $\Theta
 =\left(\alpha ,\beta \right)$.  Introduce the Fisher information
\begin{align*}
{\rm I}\left(\vartheta \right)=
 \frac{ \dot{a}\left(\vartheta \right)^2 }{2a\left(\vartheta \right)}-
\frac{2 \dot{a} \left(\vartheta \right)\;\dot{r} \left(\vartheta \right)
 }{r\left(\vartheta \right) +a \left(\vartheta \right)} +
 \frac{\dot{r}\left(\vartheta \right)^2 } {2r\left(\vartheta \right)},\qquad r\left(\vartheta \right)=\Bigl(a\left(\vartheta \right)^2+\sigma
  ^{-2}f\left(\vartheta \right)^2b\left(\vartheta \right)^2\Bigr)^{1/2}.
\end{align*}

Under regularity conditions given in the theorem below we have the lower bound
on the mean square risks of all estimators (Hajek-Le Cam's bound) \cite{IH81}
\begin{align*}
\lim_{\nu\rightarrow 0}\Liminf_{T\rightarrow \infty }\sup_{\left|\vartheta
  -\vartheta _0\right|\leq \nu }T\Ex_\vartheta \left|\bar\vartheta
_\zT-\vartheta \right|^2\geq {\rm I}\left(\vartheta_0 \right)^{-1}.
\end{align*}
We call the estimator $\bar\vartheta _\zT$ asymptotically efficient if for all
$\vartheta _0\in\Theta $ 
\begin{align*}
\lim_{\nu \rightarrow 0}\lim_{T\rightarrow \infty }\sup_{\left|\vartheta
  -\vartheta _0\right|\leq\nu }T\Ex_\vartheta \left|\bar\vartheta
_\zT-\vartheta \right|^2= {\rm I}\left(\vartheta_0 \right)^{-1}.
\end{align*}

\begin{theorem}
\label{T1}
Suppose that the following conditions hold.
\begin{enumerate}
\item The functions $a\left(\vartheta \right)$, $b\left(\vartheta\right)$,
  $f\left(\vartheta\right)$, $\vartheta \in \left[\alpha ,\beta \right]$ twice
   continuously differentiable on $\bar\Theta $.
\item   These  functions satisfy the conditions:
$b\left(\vartheta\right)\neq 0$, $f\left(\vartheta\right)\neq 0$ and
$a\left(\vartheta\right)>0$ for all $\vartheta \in
\left[\alpha ,\beta \right]$.
\item For all $\nu >0$ we have
\begin{align*}
\inf_{\vartheta_0 \in \Theta }\inf_{\left|\vartheta -\vartheta_0
\right|>\nu}\Bigl(\left|a(\vartheta)-a\left(\vartheta_0 \right)\right|+ \left|r
(\vartheta)-r \left(\vartheta_0 \right)\right|\Bigr)>0.
\end{align*}
\item The following condition holds
\begin{align*}
\inf_{\vartheta \in \Theta }\Bigl(\left|\dot{a}\left(\vartheta
\right)\right|+\left|\dot{r}\left(\vartheta \right)\right|\Bigr) >0.
\end{align*}
\end{enumerate}
 Then the MLE
$\hat{\vartheta }_\zT$ and BE $\tilde\vartheta _\zT$  are uniformly consistent,
 uniformly on compacts $\KK\subset\Theta $ asymptotically normal
\begin{align*}
\sqrt{T}\left(\hat{\vartheta
}_\zT-\vartheta_0  \right)\Longrightarrow \zeta \sim{\cal N}\left(0,{\rm
I}\left(\vartheta_0 \right)^{-1}\right),\quad  \sqrt{T}\left(\tilde{\vartheta
}_\zT-\vartheta_0  \right)\Longrightarrow \zeta ,
\end{align*}
polynomial moments converge: for any $p\geq 2$
\begin{align*}
\lim_{T\rightarrow \infty }T^{p/2}\Ex_{\vartheta _0}\left|\hat{\vartheta
}_\zT-\vartheta_0\right|^p=\Ex_{\vartheta _0}\left|\zeta\right|^p,\qquad
\lim_{T\rightarrow \infty }T^{p/2}\Ex_{\vartheta _0}\left|\tilde{\vartheta 
}_\zT-\vartheta_0\right|^p=\Ex_{\vartheta _0}\left|\zeta\right|^p,
\end{align*}
and the both estimators are   asymptotically efficient.
\end{theorem}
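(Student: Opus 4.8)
The plan is to apply the general scheme of Ibragimov and Khasminskii \cite{IH81}, organised around the normalised likelihood ratio process
\[
Z_T(u)=\frac{L\left(\vartheta_0+u/\sqrt T,\,X^T\right)}{L\left(\vartheta_0,\,X^T\right)},\qquad
u\in\UU_T=\bigl\{u:\vartheta_0+u/\sqrt T\in\Theta\bigr\}.
\]
First I would rewrite $\log Z_T(u)$ using the innovation representation ${\rm d}X_t=M(\vartheta_0,t)\,{\rm d}t+\sigma\,{\rm d}\bar W_t$, which gives
\begin{align*}
\log Z_T(u)&=\frac1\sigma\int_0^T\bigl(M(\vartheta_0+u/\sqrt T,s)-M(\vartheta_0,s)\bigr)\,{\rm d}\bar W_s\\
&\qquad -\frac1{2\sigma^2}\int_0^T\bigl(M(\vartheta_0+u/\sqrt T,s)-M(\vartheta_0,s)\bigr)^2\,{\rm d}s.
\end{align*}
This identity is the backbone of the argument: conditions~1 and~2 guarantee that $M(\vartheta,\cdot)$ and its $\vartheta$-derivative $\dot M(\vartheta,\cdot)$ are stationary Gaussian processes with the explicit integral forms \eqref{2-13}, so every estimate below reduces to a Gaussian moment computation.

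Second, I would establish local asymptotic normality. A first-order Taylor expansion of $M(\vartheta_0+u/\sqrt T,s)$ in $u$ (licit by condition~1) yields
\[
\log Z_T(u)=u\,\Delta_T-\frac{u^2}{2}\,I_T+o_{\Pb_{\vartheta_0}}(1),\qquad
\Delta_T=\frac1{\sigma\sqrt T}\int_0^T\dot M(\vartheta_0,s)\,{\rm d}\bar W_s .
\]
Since $\dot M(\vartheta_0,\cdot)$ is adapted to the innovation filtration and stationary, the martingale central limit theorem gives $\Delta_T\Rightarrow\mathcal N(0,I(\vartheta_0))$, while the law of large numbers for the quadratic variation gives $I_T\to I(\vartheta_0)=\sigma^{-2}\Ex_{\vartheta_0}\dot M(\vartheta_0,0)^2$. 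The remaining task is to check that this stationary variance equals the stated formula; from \eqref{2-13} the process $M(\vartheta,\cdot)$ is a linear combination of two exponential kernels with rates $a(\vartheta_0)$ and $r(\vartheta_0)$, so differentiating in $\vartheta$ and using the stationary covariances $\tfrac1{2a(\vartheta_0)}$, $\tfrac1{2r(\vartheta_0)}$ and $\tfrac1{a(\vartheta_0)+r(\vartheta_0)}$ of these kernels produces exactly the three terms of $I(\vartheta_0)$. Condition~4 then secures $I(\vartheta_0)>0$.

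Third, I would verify the two structural hypotheses of \cite{IH81}. The H\"older continuity bound $\Ex_{\vartheta_0}\bigl|Z_T^{1/2}(u_2)-Z_T^{1/2}(u_1)\bigr|^2\le C\,|u_2-u_1|^2$, uniform in $T$ on each ball $|u_i|\le R$, follows from the smoothness in condition~1 and uniform-in-$T$ bounds on the Gaussian moments of the integrands. The decisive hypothesis is the uniform upper bound $\Ex_{\vartheta_0}Z_T^{1/2}(u)\le e^{-\kappa|u|^2}$ in the compact range, together with an exponential-in-$T$ bound $\Ex_{\vartheta_0}\bigl(L(\vartheta,X^T)/L(\vartheta_0,X^T)\bigr)^{1/2}\le e^{-cT}$ for $|\vartheta-\vartheta_0|>\nu$. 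Both reduce, after controlling the stochastic cross term, to a lower bound on the limiting $L^2$-separation $\lim_T T^{-1}\int_0^T\Ex_{\vartheta_0}\bigl(M(\vartheta,s)-M(\vartheta_0,s)\bigr)^2\,{\rm d}s$, which by \eqref{2-13} is an explicit positive function of $a(\vartheta)-a(\vartheta_0)$ and $r(\vartheta)-r(\vartheta_0)$; the identifiability condition~3 guarantees it stays bounded away from zero. I expect this large-deviation estimate to be the main obstacle, since one must bound an exponential moment of a quadratic functional of a Gaussian process uniformly in $T$ and over the whole parameter set, where the adaptedness of $\dot M(\vartheta_0,\cdot)$ to $\bar W$ prevents a naive Girsanov computation and forces a direct Gaussian/Hellinger estimate (condition~2, keeping $r(\vartheta)$ away from zero, is used here to control the decay rates).

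Finally, with LAN and these two bounds in place, Theorems~I.10.1 and~I.10.2 of \cite{IH81} apply to both $\hat\vartheta_T$ and $\tilde\vartheta_T$: they give uniform consistency, the asymptotic normality $\sqrt T(\hat\vartheta_T-\vartheta_0),\sqrt T(\tilde\vartheta_T-\vartheta_0)\Rightarrow\zeta\sim\mathcal N(0,I(\vartheta_0)^{-1})$, convergence of all polynomial moments, and---because the limiting experiment is Gaussian with Fisher information $I(\vartheta_0)$---asymptotic efficiency in the H\'ajek--Le Cam minimax sense. The Bayesian estimator is handled by the very same $Z_T$-process, the continuity and positivity of the prior $p(\cdot)$ ensuring that the posterior localises at the $\sqrt T$ rate.
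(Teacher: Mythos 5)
The paper offers no proof of Theorem \ref{T1} at all: it refers to Theorem 3.1 of \cite{Kut04}, whose argument is precisely the Ibragimov--Khasminskii scheme you outline --- LAN of the normalized likelihood ratio via the innovation representation, the H\"older and exponential bounds on $Z_T^{1/2}$ (with condition 3 controlling the $L^2$-separation of $M(\vartheta,\cdot)$ from $M(\vartheta_0,\cdot)$ and condition 4 giving ${\rm I}(\vartheta_0)>0$), followed by Theorems 1.10.1/1.10.2 of \cite{IH81} applied to both the MLE and the BE. Your sketch is correct and follows essentially that same route, including the correct identification of ${\rm I}(\vartheta_0)$ from the two-exponential-kernel representation \eqref{2-13} and the correct isolation of the Hellinger-type tail bound as the main technical step.
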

For the proof see Theorem 3.1 in \cite{Kut04}.

%\subsubsection
 {\bf Case {\bf F}$, i.e., \vartheta =f$.}

Consider the particular case $\vartheta =f$. Then $r\left(\vartheta
\right)=\left(a^2+{b^2\vartheta ^2}{\sigma ^{-2}}\right)^{1/2} $ and the
Fisher information is
\begin{align*}
{\rm I}\left(\vartheta \right)=\frac{b^4\vartheta ^2}{2\sigma
  ^4r\left(\vartheta \right)^3} .
\end{align*}
Hence by Theorem \ref{T1} the MLE and BE are consistent, asymptotically
normal
\begin{align*}
\sqrt{T}\left(\hat\vartheta _\zT-\vartheta _0\right)\Longrightarrow {\cal N}\left(0,
\frac{2\sigma ^4r\left(\vartheta _0 \right)^3}{b^4\vartheta _0^2}\right),\qquad \sqrt{T}\left(\tilde\vartheta _\zT-\vartheta _0\right)\Longrightarrow {\cal N}\left(0,
\frac{2\sigma ^4r\left(\vartheta _0 \right)^3}{b^4\vartheta _0^2}\right)
\end{align*}
and are asymptotically efficient.

%\subsubsection
 {\bf Case {\bf A}$, i.e., \vartheta =a$.}

We have $r\left(\vartheta
\right)=\left(\vartheta^2+{b^2f ^2}{\sigma ^{-2}}\right)^{1/2} $ and the
Fisher information is
\begin{align}
\label{2-43}
{\rm I}\left(\vartheta \right)=\frac{\left[r\left(\vartheta
    \right)^2-\vartheta ^2\right]^2+r\left(\vartheta \right)\vartheta
  \left[r\left(\vartheta \right)-\vartheta \right]^2 }{2\vartheta
  r\left(\vartheta \right)^3\left[r\left(\vartheta \right)+\vartheta \right]
}=\frac{\Gamma\left(\vartheta \right)^2 \left[\left(r\left(\vartheta \right)+\vartheta\right)^2
    +r\left(\vartheta \right) \vartheta 
\right]}{2\vartheta
  r\left(\vartheta \right)^3\left[r\left(\vartheta \right)+\vartheta \right]
},
\end{align}
 the both
estimators are asymptotically normal and asymptotically efficient. Say,
\begin{align*}
\sqrt{T}\left(\hat\vartheta _\zT-\vartheta _0\right)\Longrightarrow {\cal N}\left(0,
{\rm I}\left(\vartheta_0 \right)^{-1}\right).
\end{align*}

%\subsubsection
 {\bf Case {\bf B}$, i.e., \vartheta =b$.}

The Fisher information is
\begin{align*}
{\rm I}\left(\vartheta \right)=\frac{\vartheta ^2f^4}{2r\left(\vartheta
  \right)^3\sigma ^4} 
\end{align*}
and the MLE and BE have  the corresponding asymptotic properties.

\section{Method of moments estimators.}

Consider the  partially observed system
\begin{align}
\label{3-15}
{\rm d}X_t&=f\,Y_t\,{\rm d}t+\sigma \, {\rm
  d}W_t,\qquad\qquad  
\; X_0,\quad \qquad 0\leq t\leq T,\\
 {\rm d}Y_t&=-a\,Y_t\,{\rm d}t+b\,{\rm d}V_t,\qquad\qquad
Y_0,\qquad\quad t\geq 0,\label{3-16}
\end{align}

 The process $X_t,t\geq 0$ is not ergodic and that is why for construction of
 method of moments estimators we do not use integrals like
\begin{align*}
\frac{1}{T}\int_{0}^{T}g\left(X_t\right){\rm d}t.
\end{align*}

At the beginning we suppose that $\vartheta =\left(f,a,b\right)\in\Theta
$. The set $\Theta $ is a bounded, convex, closed subset of ${\cal R}^3$ and
the values of $a$ are positive and separated from zero. For simplicity of
exposition we suppose that $T$ is integer. The construction of estimators
follow the similar algorithms first used in the work \cite{KhK18}.

 Introduce the notation
\begin{align}
R_{1,T}&=\frac{1}{T}\sum_{k=1}^{T}\left[X_k-X_{k-1}\right]^2,\qquad \qquad
\quad \Phi_1 \left(\vartheta \right)=\frac{f^2b^2}{ a^3}\left[e^{-a }-1+a
  \right]+\sigma^2,\nonumber\\ 
R_{2,T}&=\frac{1}{T}\sum_{k=2}^{T}\left[X_k-X_{k-1}\right]\left[X_{k-1}-X_{k-2}\right],\qquad
\quad \Phi_2 \left(\vartheta \right)=\frac{f^2b^2}{2a^3}\left[1-e^{-a }
  \right]^2,\nonumber\\ 
K_{1,1} \left(\vartheta \right)&=\frac{{2f^4b^4}}{{{a} ^{6}}}\left[e^{-{a} }-1+{a}
    \right]^2+\frac{ f^4
 b^4e^{4a}\left[1-e^{-a}\right]^3}{a^6\left[1+e^{-a}\right]}\nonumber\\
&\qquad \qquad \qquad \qquad \qquad \qquad +\frac{4f^2\sigma^2b^2 
  }{a^3}\left[e^{-a}-1+a\right]  +{2\sigma^4   },\nonumber
\\ R_{T}&=\left(R_{1,T},R_{2,T}\right)^\top,\quad \qquad \Phi
\left(\vartheta \right)=\left(\Phi_1 \left(\vartheta \right),\Phi_2
\left(\vartheta \right)\right)^\top,\qquad \xi _*=\left(\xi _1,\xi
_2\right)^\top,\nonumber\\ 
{\bf K}\left(\vartheta \right)&=\left(
\begin{array}{cc}
K_{1,1} \left(\vartheta \right)&K_{1,2} \left(\vartheta \right)\nonumber\\
K_{2,1} \left(\vartheta \right)&K_{2,2} \left(\vartheta \right)\nonumber\\
\end{array}
\right),\quad R_{T}=\left(R_{1,T},R_{2,T}\right)^\top,\; \Phi
\left(\vartheta \right)=\left(\Phi_1 \left(\vartheta \right),\Phi_2
\left(\vartheta \right)\right)^\top.
\end{align}
Here $\xi _*\sim {\cal N}\left(0,{\bf K}\left(\vartheta_0 \right) \right)$ and
the calculation of other terms of the matrix ${\bf K}\left(\vartheta \right) $
we will  discuss later.

In the work \cite{Kut19b} the method of moments estimators (MME) $f_T^*,a_T^*$
and $b_T^*$ were proposed and studied for the one-dimensional parameters
$\vartheta =f$, $\vartheta =a$ and $\vartheta =b$ respectively. The MMEs are 
\begin{align*}
f_T^*&=\left(\frac{a^3}{b^2\left[e^{-a}-1+a\right]T}
\sum_{k=1}^{T}\left(\left[X_{k}-X_{k-1}\right]^2-\sigma^2\right)
\right)^{1/2},\\
b_T^*&=\left(\frac{a^3}{f^2\left[e^{-a}-1+a\right]T}
\sum_{k=1}^{T}\left(\left[X_{k}-X_{k-1}\right]^2-\sigma^2\right)
\right)^{1/2},\\
a_T^*&=H\left(\frac{1}{f^2b^2T}
\sum_{k=1}^{T}\left(\left[X_{k}-X_{k-1}\right]^2-\sigma^2\right)
\right),
\end{align*}
where $H\left(y\right)$ is a function inverse to $y=h\left(x\right)=
x^{-3}\left(x-1+e^{-x}\right)$. Below we show  that $h\left(x\right),x> 0$ is strictly
decreasing function.

It was shown in \cite{Kut19b} that these estimators are consistent and for any
$p\geq 1$
\begin{align}
\label{3-19}
\sup_{f\in \KK}T^{p/2}\Ex_{f}\left|f_T^*-f \right|^p\leq C,\;
\sup_{b\in \KK}T^{p/2}\Ex_{b}\left|b_T^*-b \right|^p\leq C, \;
\sup_{a\in \KK}T^{p/2}\Ex_{a}\left|a_T^*-a \right|^p\leq C, 
\end{align}
For construction of One-step MLE-process we need just the estimates \eqref{3-19}
only. 

The proposition below allows us to prove the asymptotic normality of these
estimators and to consider the two-dimensional case too.

\begin{proposition}
\label{3-P1}
 The statistic $R_\zT $  has the following properties:
\begin{enumerate}
\item Uniformly
on $ \Theta $ it converges to $\Phi \left(\vartheta \right) $,
i.e., for any $\nu >0$ 
\begin{align*}
\lim_{T \rightarrow \infty }\sup_{\vartheta \in\Theta}\Pb_\vartheta
\left(\left\|R_\zT -\Phi \left(\vartheta \right)\right\|\geq \nu \right)=0.
\end{align*}
\item Is uniformly
on compacts $\KK \subset\Theta 
$  asymptotically normal
\begin{align}
\label{3-20}
\sqrt{T }\left(R_\zT -\Phi \left(\vartheta \right)\right)\Longrightarrow  \xi _*,
\end{align}
\item The moments converge: for any $p>0$  uniformly on  $\KK 
$   
\begin{align*}
\lim_{T\rightarrow \infty }T ^{p/2}\Ex_\vartheta \left\|R_\zT -\Phi \left(\vartheta
\right)\right\|^p=\Ex_\vartheta \left|\xi _*\right|^p,
\end{align*}
and there exists a constant $C>0$ such that
\begin{align}
\label{3-12}
\sup_{\vartheta \in\KK}T ^{p/2}\Ex_\vartheta \left\|R_\zT -\Phi \left(\vartheta
\right)\right\|^p\leq  C.
\end{align}

\end{enumerate}
\end{proposition}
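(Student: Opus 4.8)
My plan rests on the observation that the increments $\Delta X_k := X_k-X_{k-1}= f\int_{k-1}^{k}Y_t\,{\rm d}t+\sigma\left(W_k-W_{k-1}\right)$ form, in the stationary regime, a centered stationary Gaussian sequence. Since the Riccati solution $\gamma(\vartheta,t)$ converges exponentially to $\gamma_*(\vartheta)$ (as noted after \eqref{2-10}), I would assume $Y$ stationary, the transient contributing only a uniformly negligible term to all the quantities below. Using $\mathrm{Cov}(Y_s,Y_t)=\tfrac{b^2}{2a}e^{-a|t-s|}$ and the independence of $W$ and $Y$, a direct computation yields the autocovariances
\begin{align*}
\rho_\vartheta(j):=\Ex_\vartheta\,\Delta X_k\,\Delta X_{k+j},\qquad \rho_\vartheta(0)=\Phi_1(\vartheta),\quad \rho_\vartheta(1)=\Phi_2(\vartheta),\quad \rho_\vartheta(j)=\Phi_2(\vartheta)\,e^{-a(j-1)}\ \ (j\ge1),
\end{align*}
so that $\left|\rho_\vartheta(j)\right|\le C\,e^{-aj}$ with $a$ bounded below by a positive constant on $\Theta$ (condition~2). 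The uniform summability of $\left(\rho_\vartheta(j)\right)_j$ is what drives all three assertions.

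For the first assertion I would note that $R_{1,\zT}$ and $R_{2,\zT}$ are sample means of the stationary sequences $(\Delta X_k)^2$ and $\Delta X_k\Delta X_{k-1}$, whose expectations are exactly $\Phi_1(\vartheta)$ and $\Phi_2(\vartheta)$. By Isserlis' (Wick's) formula the covariances of these quadratic terms reduce to products of the $\rho_\vartheta$'s (e.g.\ $\mathrm{Cov}((\Delta X_k)^2,(\Delta X_{k+j})^2)=2\rho_\vartheta(j)^2$), hence decay exponentially in $j$; summing them gives $\mathrm{Var}_\vartheta R_{i,\zT}\le C/T$ uniformly on $\Theta$. Chebyshev's inequality then yields the uniform convergence in probability claimed in part~1.

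For the asymptotic normality, observe that $\sqrt{T}\left(R_\zT-\Phi(\vartheta)\right)$ is a normalised sum of functionals of Hermite rank two of the underlying Gaussian sequence, whose covariances lie in $\ell^1$. Such sums obey a central limit theorem: one may invoke the Breuer--Major/fourth-moment theorem for the second Wiener chaos, or proceed by an $m$-dependent approximation, truncating the dependence of each $\Delta X_k$ to a window of width $m$, applying the classical CLT for $m$-dependent sequences, and letting $m\to\infty$ with the tail controlled by the exponential decay of $\rho_\vartheta$. The limiting matrix is $\mathbf K(\vartheta)=\sum_{j\in\ZZ}\mathrm{Cov}_\vartheta\bigl((\Delta X_0^2,\Delta X_0\Delta X_{-1}),(\Delta X_j^2,\Delta X_j\Delta X_{j-1})\bigr)$; evaluating these fourth-order Gaussian moments by Isserlis and summing the resulting geometric series in $e^{-a}$ reproduces $K_{1,1}(\vartheta)$ as displayed, and likewise the entries $K_{1,2}=K_{2,1}$ and $K_{2,2}$.

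Finally, the uniform moment bound \eqref{3-12} and the convergence of moments would follow from hypercontractivity: each component of $\sqrt{T}\left(R_\zT-\Phi(\vartheta)\right)$ lies in the sum of the Gaussian chaoses of order at most two generated by $(W,V,Y_0)$, on which every $L^p$-norm is dominated by the $L^2$-norm times a universal constant $C_p$; since the $L^2$-norm is $O(1)$ uniformly by the variance estimate above, \eqref{3-12} holds, and convergence in distribution (part~2) upgraded by the resulting uniform integrability gives the moment convergence. The hard part is twofold: the combinatorial bookkeeping needed to identify $\mathbf K(\vartheta)$ exactly (the Isserlis expansion of the fourth-order moments summed over all lags), and ensuring that \emph{every} estimate is uniform in $\vartheta$ — which rests entirely on $a$ being bounded away from zero on $\Theta$, so that the exponential decay rate of $\rho_\vartheta$, and hence all the constants $C,C_p$, can be chosen uniformly.
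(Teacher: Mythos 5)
Your proposal is correct, and while it follows the same overall skeleton as the paper's proof (reduction to the stationary regime, exponential decay of the autocovariances driven by $a\geq\alpha_a>0$, Chebyshev for the uniform consistency, a CLT for weakly dependent sums, and uniform integrability to upgrade weak convergence to convergence of moments), it substitutes genuinely different tools at the two technical pinch points. Where the paper computes the fourth-order moments $\Ex_{\vartheta}\left[\eta_k^2\eta_j^2\right]$ by a lengthy conditioning argument on the past of $Y$, you obtain the same exponentially decaying covariances in one line from the Isserlis formula applied directly to the jointly Gaussian increments $X_k-X_{k-1}$; one can check that $2\left(\Ex_\vartheta\eta_k\eta_j\right)^2$ reproduces exactly the paper's bound $C e^{-2a\left|k-j\right|}$ and the structure of $K_{1,1}\left(\vartheta\right)$. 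For the CLT the paper invokes a central limit theorem for exponentially mixing time series, whereas you place $\sqrt{T}\left(R_\zT-\Ex_\vartheta R_\zT\right)$ in the second Wiener chaos generated by $\left(W,V,Y_0\right)$ and invoke Breuer--Major/fourth-moment theorems (or an $m$-dependent approximation); for the moment bound \eqref{3-12} the paper cites Rosenthal's inequality for time series, whereas you use hypercontractivity, so that the all-$p$ bound follows with universal constants from the single uniform variance estimate. Your route exploits Gaussianity to the hilt and is easier to make uniform in $\vartheta$, since verifying the hypotheses of a mixing CLT and of Rosenthal's inequality uniformly is exactly the kind of bookkeeping the paper glosses over; the price is that the argument is tied to the linear-Gaussian structure, while the mixing/Rosenthal route would survive non-Gaussian modifications of the model. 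Two points you should make explicit in a full write-up: first, the joint asymptotic normality of the vector $R_\zT$ requires the multivariate form of the chaos CLT (Peccati--Tudor) or a Cram\'er--Wold step, exactly as the paper does for $\lambda R_{1,T}+\mu R_{2,T}$; second, since $Y_0$ need not have the stationary variance, the bias $\Ex_\vartheta R_\zT-\Phi\left(\vartheta\right)=O\left(1/T\right)$ (uniform on $\Theta$) must be carried through part 3, where it contributes $T^{p/2}O\left(T^{-p}\right)\rightarrow 0$ --- your remark that the transient is uniformly negligible covers this, but it deserves the one-line estimate.
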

\begin{proof}

Using the equality \eqref{3-15}  we write
\begin{align*}
R_{1,T}&=\frac{1}{T}\sum_{k=1}^{T}\left[X_k-X_{k-1}\right]^2
=\frac{1}{T}\sum_{k=1}^{T}\left[\int_{k-1}^{k}{\rm
    d}X_s\right]^2\\
&=\frac{f^2}{T}\sum_{k=1}^{T}\eta _k^2 +\frac{2f\sigma
}{T}\sum_{k=1}^{T}\eta _k \left[W_k-W_{k-1}\right]+\frac{\sigma^2
}{T}\sum_{k=1}^{T} \left[W_
k-W_{k-1}\right]^2,
\end{align*}
where
\begin{align*}
\eta _k =\int_{k-1}^{k}Y_t\;{\rm d}t.
\end{align*}
We have
\begin{align*}
\Ex_{\vartheta _0}R_{1,T}&=\frac{f^2}{T}\sum_{k=1}^{T}\Ex_{\vartheta }\eta _k^2+\sigma ^2
\end{align*}
because $Y^T$ and $W^T$ are independent. 

For the  process $Y^T$ we have the equality 
\begin{align*}
Y_t=Y_0e^{-at}+b\int_{0}^{t}e^{-a\left(t-s\right)}{\rm d}V_s.
\end{align*} 
Hence
\begin{align*}
\Ex_{\vartheta}Y_tY_s&=\Ex_{\vartheta _0}Y_0 ^2e^{-{a}
  \left(t+s\right)}+b^2e^{-{a} \left(t+s\right)}
\int_{0}^{t\wedge s}e^{2{a} r}{\rm d}r\\
&=\left[d^2-\frac{b^2}{2a} \right] e^{-{a}
  \left(t+s\right)}+\frac{b^2}{2a}e^{-{a}
  \left|t-s\right|}
\end{align*}
and
\begin{align*}
\Ex_{\vartheta}\eta _k^2& =\int_{k-1}^{k}\int_{k-1}^{k}\Ex_{\vartheta
}Y_tY_s\,{\rm d}s{\rm d}t \\ &=\left[d^2-\frac{b^2}{2a}
  \right]\left(\int_{k-1}^{k}e^{-{a} t}{\rm d}t\right)^2
+\frac{b^2}{2a}\int_{k-1}^{k}\int_{k-1}^{k}e^{-{a} \left|t-s\right|}{\rm
  d}s{\rm d}t\\ &=\left[\frac{d^2}{a^2}-\frac{b^2}{2a^3} \right] \left[e^{a}
  -1\right]^2e^{-2{a} k}+\frac{b^2}{{a} ^3}\left[e^{-{a} }-1+{a} \right].
\end{align*}
Therefore
\begin{align*}
\Ex_{\vartheta}R_{1,T}&=\left[e^{a}
  -1\right]^2\frac{f^2\left[2ad^2 -b^2\right]}{{2a^3}
  T}\sum_{k=1}^{T}e^{-2{a} k}+\frac{f^2b^2}{{a}
  ^3}\left[e^{-{a} }-1+{a} \right]+\sigma ^2 \\
&=\frac{f^2b^2}{{a}
  ^3}\left[e^{-{a} }-1+{a} \right]+\sigma ^2+r_\zT ,\qquad
\left|r_\zT\right|\leq \frac{C}{T}. 
\end{align*}
For simplicity of calculations we suppose that the process $Y_t,t\geq 0$ is
stationary. Then 
\begin{align*}
\Ex_{\vartheta}R_{1,T}&=\frac{f^2b^2}{{a}
  ^3}\left[e^{-{a} }-1+{a} \right]+\sigma ^2.
\end{align*}
 
The covariance is ($j<k$)
\begin{align*}
\Ex_{\vartheta }\eta _k\eta _j&=b^2\int_{k-1}^{k}\int_{j-1}^{j}\Ex_{\vartheta
} Y_tY_s {\rm d}s{\rm
  d}t=b^2\int_{k-1}^{k}\int_{j-1}^{j}e^{-a\left(t+s\right)}\int_{-\infty
}^{t\wedge s}e^{2au}{\rm d}u {\rm d}s{\rm d}t\\
&=\frac{b^2}{2a}\int_{k-1}^{k}\int_{j-1}^{j}e^{-a\left|t-s\right|}{\rm d}s{\rm
  d}t=\frac{b^2e^a}{a^3}\left(1-e^{-a}\right)^2 \,e^{-a\left|k-j\right|}.
\end{align*}
Hence 
\begin{align}
\label{3-13}
\sup_{\vartheta \in\Theta }\Ex_{\vartheta }\eta _k\eta _j\leq C \,e^{-a\left|k-j\right|}
\end{align}
and this Gaussian sequence $\eta _1,\eta _2,\ldots $ is exponentially mixing.
By the law of large numbers
\begin{align*}
 R_{1,T}\longrightarrow \Phi_1 \left(\vartheta \right)=\frac{f^2b^2}{{a}
   ^3}\left[e^{-{a} }-1+{a} \right]+\sigma ^2,
\end{align*}
as $T\rightarrow \infty $.

 Then we can write
\begin{align*}
&\Ex_{\vartheta }\left(R_{1,T}-\Ex_{\vartheta}R_{1,T} \right)^2\\
&\qquad =\Ex_{\vartheta
  }\left(\frac{f^2}{T}\sum_{k=1}^{T}\left[\eta _k^2- \Ex_{\vartheta}\eta _k^2
    \right] +\frac{2f\sigma }{T}\sum_{k=1}^{T}\eta _k \zeta _k+\frac{\sigma^2
  }{T}\sum_{k=1}^{T} \left(\zeta _k^2-1\right)\right)^2\\
 &\qquad
  =\frac{f^4}{T^2}\sum_{k=1}^{T}\sum_{j=1}^{T}\Ex_{\vartheta }\left(\left[\eta
    _k^2- \Ex_{\vartheta}\eta _k^2 \right]\left[\eta _j^2- \Ex_{\vartheta}\eta
    _j^2 \right]\right)+\frac{4f^2\sigma^2
  }{T^2}\sum_{k=1}^{T}\sum_{j=1}^{T}\Ex_{\vartheta }\bigl(\eta _k \eta _j
  \zeta _k\zeta _j\bigr)\\
&\qquad \quad +\frac{\sigma^4
  }{T^2}\sum_{k=1}^{T}\sum_{j=1}^{T}\Ex_{\vartheta } \left(\zeta _k^2-1\right)\left(\zeta _j^2-1\right)\\
 &\qquad
  =\frac{f^4}{T^2}\sum_{k=1}^{T}\sum_{j=1}^{T}\Ex_{\vartheta }\left(\left[\eta
    _k^2- \Ex_{\vartheta}\eta _k^2 \right]\left[\eta _j^2- \Ex_{\vartheta}\eta
    _j^2 \right]\right)+\frac{4f^2\sigma^2
  }{T^2}\sum_{k=1}^{T}\Ex_{\vartheta } \eta _k^2 \\
&\qquad \quad +\frac{\sigma^4
  }{T^2}\sum_{k=1}^{T}\Ex_{\vartheta } \left(\zeta _k^2-1\right)^2\\
 &\qquad
  \leq \frac{f^4}{T^2}\sum_{k=1}^{T}\sum_{j=1}^{T}\Ex_{\vartheta }\left(\left[\eta
    _k^2- \Ex_{\vartheta}\eta _k^2 \right]\left[\eta _j^2- \Ex_{\vartheta}\eta
    _j^2 \right]\right)+\frac{4f^2\sigma^2b^2
  }{a^3T}\left[a-1+e^{-a}\right]  +\frac{2\sigma^4
  }{T}     .
\end{align*}
For the last sum using once more the presentation \eqref{3-16} we obtain the
exponential bound like \eqref{3-13} as follows. Note that 
\begin{align*}
\Ex_{\vartheta }\left(\left[\eta
    _k^2- \Ex_{\vartheta}\eta _k^2 \right]\left[\eta _j^2- \Ex_{\vartheta}\eta
    _j^2 \right]\right)=\Ex_{\vartheta }\left[\eta
    _k^2\eta _j^2\right]-\left(\Ex_{\vartheta }\left(\eta
    _1^2\right)\right)^2,
\end{align*}
where $\Ex_{\vartheta }\eta _1^2={b^2}{{a} ^{-3}}\left[e^{-{a} }-1+{a}
  \right] $.

We have 
\begin{align*}
\Ex_{\vartheta }\left[\eta _k^2\eta
  _j^2\right]&=\int_{k-1}^{k}\int_{k-1}^{k}\int_{j-1}^{j}\int_{j-1}^{j}
\Ex_\vartheta Y_tY_sY_rY_q\;{\rm d}t{\rm d}s{\rm d}r{\rm d}q \\
&=
b^4\int_{k-1}^{k}\int_{k-1}^{k}\int_{j-1}^{j}\int_{j-1}^{j}e^{-a\left(t+s+r+q\right)}\Ex_\vartheta
I\left(t \right) I\left(s \right) I\left(r \right) I\left(q \right) \;{\rm
  d}t{\rm d}s{\rm d}r{\rm d}q .
\end{align*}
Suppose that $j\leq 
k-1$. Then using the elementary properties of the stochastic
integral we can write  
\begin{align*}
\Ex_\vartheta
I\left(t \right) I\left(s \right) I\left(r \right) I\left(q
\right)&=\Ex_\vartheta\Bigl(  I\left(r \right) I\left(q \right)\Ex_\vartheta 
\Bigl( I\left(t \right) I\left(s \right)|{\scr F}_{r\vee q}^Y\Bigr)\Bigr)\\
&=\Ex_\vartheta\Bigl(  I\left(r \right) I\left(q \right)\Ex_\vartheta 
\Bigl(  I\left(s\wedge t \right)^2|{\scr F}_{r\vee q}^Y\Bigr)\Bigr)\\
&=\Ex_\vartheta\Bigl(  I\left(r \right) I\left(q \right)\Ex_\vartheta 
\Bigl(  \left[I\left(s\wedge t \right)-I\left(r\vee q\right)+ I\left(r\vee
  q\right)\right]^2|{\scr F}_{r\vee q}^Y\Bigr)\Bigr) \\
&=\Ex_\vartheta\Bigl(  I\left(r \right) I\left(q \right)
\Bigl( \int_{r\vee q}^{s\wedge t} e^{2au}{\rm d}u+ I\left(r\vee
  q\right)^2\Bigr)\Bigr) \\
&=\frac{1}{4a^2}\left[e^{2a \left(s\wedge t\right)}-e^{2a \left(r\vee
      q\right)}\right]e^{2a \left(r\wedge
      q\right)} +
  \Ex_\vartheta\Bigl(  I\left(r\wedge q \right)
  I\left(r\vee   q\right)^3\Bigr) \\
&=\frac{1}{4a^2}\left[e^{2a\left[ \left(s\wedge t\right)+ \left(r\wedge
      q\right) \right]}-e^{2a \left(r+
      q\right)}\right]+\frac{3}{4a^2} e^{2a \left(r+
      q\right)}\\
&=\frac{1}{4a^2}\left[e^{2a\left[ \left(s\wedge t\right)+ \left(r\wedge
      q\right) \right]}+2e^{2a \left(r+
      q\right)}\right].
\end{align*}
Here we denoted  $\sigma $-algebra ${\scr F}_t^Y=\sigma \left(Y_s,s\leq t\right)
$ and  used the equality
\begin{align*}
& \Ex_\vartheta\Bigl( I\left(r\wedge q \right) I\left(r\vee q\right)^3\Bigr)=
 \Ex_\vartheta\Bigl( I\left(r\wedge q \right) \left[ I\left(r\vee
   q\right)-I\left(r\wedge q \right)+I\left(r\wedge q \right)\right] ^3\Bigr)
 \\ & \qquad \qquad = 3\Ex_\vartheta\Bigl( I\left(r\wedge q \right)\Bigr)^2
 \Ex_\vartheta\Bigl( \left[I\left(r\vee q\right)-I\left(r\wedge
   q\right)\right]^2\Bigr)+\Ex_\vartheta\Bigl( I\left(r\wedge q \right)\Bigr)
 ^4 \\ & \qquad \qquad = \frac{3}{4a^2}e^{2a\left( r\wedge
   q\right)}\left[e^{2a\left( r\vee q\right)}- e^{2a\left( r\wedge
     q\right)}\right]+\frac{3}{4a^2}e^{4a\left( r\wedge q\right)}
 =\frac{3}{4a^2}e^{2a\left( r+   q\right)} .
\end{align*}
Therefore
\begin{align*}
e^{-a\left(t+s+r+q\right)}\Ex_\vartheta
I\left(t \right) I\left(s \right) I\left(r \right) I\left(q
\right)=\frac{1}{4a^2} e^{-a\left|t-s\right|-a|r-q|}+\frac{1}{2a^2}e^{-a
  \left(t+s\right)  }e^{a
  \left(r+q\right)  }
\end{align*}
and
\begin{align*}
&\int_{k-1}^{k}\int_{k-1}^{k}\int_{j-1}^{j}\int_{j-1}^{j}e^{-a\left(t+s+r+q\right)}\Ex_\vartheta
  I\left(t \right) I\left(s \right) I\left(r \right) I\left(q \right) \;{\rm
    d}t{\rm d}s{\rm d}r{\rm d}q \\
 &\qquad \qquad
  =\frac{1}{4a^2}\int_{k-1}^{k}\int_{k-1}^{k}\int_{j-1}^{j}\int_{j-1}^{j}
  e^{-a\left|t-s\right|-a|r-q|}\;{\rm d}t{\rm d}s{\rm d}r{\rm d}q \\ 
&\qquad
  \qquad\qquad +\frac{1}{2a^2}
  \int_{k-1}^{k}\int_{k-1}^{k}\int_{j-1}^{j}\int_{j-1}^{j} e^{-a
    \left(t+s-r-q\right) }\;{\rm d}t{\rm d}s{\rm d}r{\rm d}q\\ 
&\qquad \qquad   =\frac{{1}}{{{a} ^{6}}}\left[e^{-{a} }-1+{a}
    \right]^2+\frac{e^{2a}\left[1-e^{-a}\right]^4}{2a^6}\;e^{-2a\left|k-j\right|} .
\end{align*}
Hence if $j\leq k-1$ then
\begin{align*}
 \Ex_{\vartheta }\left(\left[\eta
    _k^2- \Ex_{\vartheta}\eta _k^2 \right]\left[\eta _j^2- \Ex_{\vartheta}\eta
    _j^2
   \right]\right)=\frac{b^4e^{2a}\left[1-e^{-a}\right]^4}{2a^6}\;e^{-2a\left|k-j\right|}
 \leq  C\;e^{-2a\left|k-j\right|}.
\end{align*}
If $k=j$ then 
\begin{align*}
\Ex_{\vartheta }\eta
    _k^4=3\left( \Ex_{\vartheta }\eta
    _k^2\right)^2 =\frac{{3b^4}}{{{a} ^{6}}}\left[e^{-{a} }-1+{a}
    \right]^2
\end{align*}
and
\begin{align*}
\Ex_{\vartheta }\left[\eta
    _k^2- \Ex_{\vartheta}\eta _k^2 \right]^2=\frac{{2b^4}}{{{a} ^{6}}}\left[e^{-{a} }-1+{a}
    \right]^2.
\end{align*}
Therefore
\begin{align*}
&\frac{1}{T}\sum_{k=1}^{T}\sum_{j=1}^{T}\Ex_{\vartheta }\left(\left[\eta _k^2-
    \Ex_{\vartheta}\eta _k^2 \right]\left[\eta _j^2- \Ex_{\vartheta}\eta _j^2
    \right]\right)\\
 &\qquad \quad \quad =\frac{{2b^4}}{{{a}
      ^{6}}}\left[e^{-{a} }-1+{a}
    \right]^2+\frac{b^4e^{2a}\left[1-e^{-a}\right]^4}{2a^6T}\sum_{k=1}^{T}\sum_{j\not=k}^{T}
  e^{-2a\left|k-j\right|} \\ 
&\qquad \quad \quad \longrightarrow
  \frac{{2b^4}}{{{a} ^{6}}}\left[e^{-{a} }-1+{a} \right]^2+\frac{
    b^4e^{4a}\left[1-e^{-a}\right]^3 }{a^6\left[1+e^{- a}\right]}.
\end{align*}
Here we used the relations
\begin{align*}
\sum_{j=1,j\not=k}^{T}e^{-2a\left|k-j\right|}=\frac{2e^{2a}}{1-e^{-2a} }-\frac{e^{-2ak}+e^{-2a\left(T-k+1\right)}}{1-e^{-2a} }
\end{align*}
and
\begin{align*}
\frac{\left[1-e^{-a}\right]^4}{\left(1-e^{-a}\right)\left(1+e^{-a}\right)}=\frac{\left[1-e^{-a}\right]^3}{\left(1+e^{-a}\right)}.
\end{align*}
Therefore we obtained the estimate
\begin{align*}
\sup_{\vartheta \in\KK}\Ex_{\vartheta }\Bigl(R_{1,T}-\Ex_{\vartheta}R_{1,T}
\Bigr)^2\leq \frac{C}{T} 
\end{align*}
and the convergence
\begin{align*}
{T}\Ex_{\vartheta }\Bigl(R_{1,T}-\Ex_{\vartheta}R_{1,T}\Bigr)^2&\longrightarrow
\frac{{2f^4b^4}}{{{a} ^{6}}}\left[e^{-{a} }-1+{a}
    \right]^2+\frac{
 f^4 b^4e^{4a}\left[1-e^{-a}\right]^3}{a^6\left[1+e^{-a}\right]}\\
&\qquad +\frac{4f^2\sigma^2b^2 
  }{a^3}\left[e^{-a}-1+a\right]  +{2\sigma^4   }=K_{1,1}\left(\vartheta \right)   .
\end{align*}
The estimate \eqref{3-12} is obtained directly from the Rosenthal's
inequality for time series. As the random variables
$\left|\sqrt{T}\left(R_{1,T}-\Phi _1\left(\vartheta \right)\right)\right|^p $
are uniformly integrable we have the convergence of polynomial moments too.

By the central limit theorem for exponentially mixing time series we obtain
the convergence
\begin{align*}
\sqrt{T}\left(R_{1,T}-\Phi _1\left(\vartheta \right)\right)\Longrightarrow \xi _1\sim
     {\cal N}\left(0, K_{1,1}\left(\vartheta \right)\right). 
\end{align*}

Consider now the second component
\begin{align*}
 R_{2,T}&=\frac{1}{T}\sum_{k=2}^{T}\left[f\eta _k+\sigma
  \left(W_k-W_{k-1}\right)\right]\left[f\eta _{k-1}+\sigma
  \left(W_{k-1}-W_{k-2}\right)\right]\\
&= \frac{f^2}{T}\sum_{k=2}^{T}\eta _k\eta _{k-1}+ \frac{f\sigma
}{T}\sum_{k=2}^{T}\left[\eta _k\zeta _{k-1}+ \eta _{k-1}\zeta _{k}\right]  +\frac{\sigma
  ^2}{T}\sum_{k=2}^{T}\zeta _{k}\zeta _{k-1} .
\end{align*}
Recall  that
\begin{align*}
\Ex_\vartheta \eta _k\zeta _{k-1}=0,\qquad \Ex_\vartheta\eta _{k-1}\zeta
_{k}=0,\qquad \Ex_\vartheta\zeta _{k}\zeta  _{k-1}=0 
\end{align*}
and by the law of large numbers we have the convergences
\begin{align*}
\frac{1 }{T}\sum_{k=2}^{T}\eta _k\zeta _{k-1}\longrightarrow 0,\qquad\frac{1
}{T}\sum_{k=2}^{T}\eta _{k-1}\zeta _{k}\rightarrow 0,\qquad 
 \frac{1}{T}\sum_{k=2}^{T}\zeta _{k}\zeta _{k-1}\longrightarrow 0.
\end{align*}
Further, 
\begin{align*}
\Ex_{\vartheta}R_{2,T}&= \frac{f^2}{T}\sum_{k=2}^{T}\Ex_{\vartheta}\eta _k\eta
_{k-1}=\frac{f^2b^2}{2aT}\sum_{k=2}^{T}
\int_{k-1}^{k}\left(\int_{k-2}^{k-1}e^{-a\left|t-s\right|}{\rm d}s\right) {\rm
  d}t\\
&\longrightarrow \frac{f^2b^2}{2a^3}\left[1-e^{-a}\right]^2\equiv \Phi _2\left(\vartheta \right).
\end{align*}
For the variance  of $R_{2,T} $ we have the following expression
\begin{align*}
\Ex_\vartheta \left(R_{2,T}-\Ex_\vartheta R_{2,T}\right)^2&=
\frac{f^4}{T^2}\sum_{k=2}^{T}\sum_{j=2}^{T}\Ex_\vartheta\left[\eta _k\eta
  _{k-1}-\Ex_\vartheta\eta _k\eta _{k-1}\right]\left[\eta _j\eta
  _{j-1}-\Ex_\vartheta\eta _j\eta _{j-1}\right]\\
 &\qquad + \frac{f^2\sigma^2
}{T^2}\sum_{k=2}^{T}\sum_{j=2}^{T}\Ex_\vartheta\left[\eta _k\zeta _{k-1}+ \eta
  _{k-1}\zeta _{k}\right]\left[\eta _j\zeta _{j-1}+ \eta _{j-1}\zeta
  _{j}\right]\\
 &\qquad +\frac{\sigma
  ^4}{T^2}\sum_{k=2}^{T}\sum_{j=2}^{T}\Ex_\vartheta\zeta _{k}\zeta _{k-1}\zeta
_{j}\zeta
_{j-1}\\ &=\frac{f^4}{T^2}\sum_{k=2}^{T}\sum_{j=2}^{T}\left(\Ex_\vartheta\left[\eta
  _k\eta _{k-1}\eta _j\eta _{j-1}\right]-\left[\Ex_\vartheta\eta _k\eta
  _{k-1}\right]^2\right)+ \frac{\sigma
  ^4\left(T-2\right)}{T^2}\\ &\qquad +\frac{f^2\sigma^2
}{T^2}\sum_{k=2}^{T}\Ex_\vartheta\left[\eta _k^2+2\eta _k \eta _{k-2}+ \eta
  _{k-1}^2\right]. 
\end{align*}
The values of these expectations are already calculated above
\begin{align*}
f^2\Ex_\vartheta\eta _k^2=\Phi _1\left(\vartheta \right),\quad
f^2\Ex_\vartheta\eta _k\eta _{k-1}= \Phi _2\left(\vartheta \right),\quad
f^2\Ex_\vartheta\eta _k\eta _{k-2}= e^{-a}\Phi _2\left(\vartheta \right).
\end{align*}
The calculation of the expectation $\Ex_\vartheta\left[\eta _k\eta _{k-1}\eta
  _j\eta _{j-1}\right] $ can be carried out following the same lines as it was
done above.  Hence as above we can obtain the estimate
\begin{align*}
\sup_{\vartheta \in\KK} \Ex_\vartheta \left(R_{2,T}-\Ex_\vartheta R_{2,T}\right)^2\leq \frac{C}{T}.
\end{align*}
By  the law of large numbers 
\begin{align*}
R_{2,T}\longrightarrow \Phi _2\left(\vartheta \right)
\end{align*}
and by the central limit theorem
\begin{align*}
\sqrt{T}\left(R_{2,T}-\Phi _2\left(\vartheta \right) \right)\Longrightarrow \xi _2.
\end{align*}
Moreover it can be verified by the direct calculations that for any $\lambda
,\mu $ 
\begin{align*}
\sqrt{T}\Bigl(\lambda \left(R_{1,T}-\Phi _1\left(\vartheta \right)\right)+\mu
\left(R_{2,T}-\Phi _2\left(\vartheta \right)\right)  \Bigr)\Longrightarrow
\lambda \xi _1+\mu \xi _2. 
\end{align*}
and this provides the asymptotic normality \eqref{3-20}.
 These calculations are direct, are close to the given above calculations of $
 K_{1,1}\left(\vartheta \right)$ and cumbersome that is why we omit
them and omit as well the corresponding calculations related with the terms
$K_{1,2}\left(\vartheta \right)=K_{2,1}\left(\vartheta \right)$ and
$K_{2,2}\left(\vartheta \right) $ of the matrix ${\bf K}\left(\vartheta \right)$.

Remark that for any $\nu >0$
\begin{align*}
\sup_{\vartheta \in\KK}\Pb_\vartheta \left(\left\| R_{T} -\Phi\left(\vartheta
\right) \right\|\geq \nu \right)\leq {\nu ^{-2}}{\sup_{\vartheta
    \in\KK}\Ex_\vartheta \left\|R_{T} -\Phi\left(\vartheta \right)
  \right\|^2}\leq \frac{C}{\nu ^2T}
\end{align*}

\end{proof}

%\subsubsection
{\bf Estimation of the parameters $f,b$ and $a$.}

The asymptotic normality of the estimators $f^*_T,b^*_T,a^*_T$ we obtain from
Proposition \ref{3-P1} as follows.  For the estimators $f^*_T $ and $b^*_T $ we have
\begin{align*}
\sqrt{T}\left(f^*_T-f _0 \right)&\Rightarrow \xi _f\sim {\cal
  N}\left(0,D_f\left(f_0\right)^2\right) ,\quad  D_f\left(f \right)^2=\frac{a^3\;K_{1,1}\left(f
  \right)}{4b^2\left(e^{-a}-1+a\right)\left(\Phi _1\left(f
  \right)-\sigma ^2\right) },\\
\sqrt{T}\left(b_\zT^*-b _0 \right)&\Rightarrow \xi _b\sim {\cal
  N}\left(0,D_b\left(b_0\right)^2\right) ,\quad D_b\left(b \right)^2=\frac{a^3\;K_{1,1}\left(b
  \right)}{4f^2\left(e^{-a}-1+a\right)\left(\Phi _1\left(b
  \right)-\sigma ^2\right) }.
\end{align*}

The asymptotic normality of $a^*_T $ is obtained as follows. 
Suppose that $\vartheta =a\in \Theta =\left(\alpha ,\beta \right), \alpha >0$
and the values of  $f\not=0$ and  $b\not=0$ are known. 

First we  verify that the function $\Phi _1\left(a \right),a
\in\Theta $ is strictly decreasing.  Introduce the function $h
\left(x\right)=x^{-3}\left(x-1+e^{-x}\right),x>0 $. Then for its derivative we
have the expression $h '\left(x\right)=x^{-4}\left[3-2x-\left(3+x\right)e^{-x}
  \right] $. Remark, that the function
$g\left(x\right)=3-2x-\left(3+x\right)e^{-x} $ at $x=0$ is $g\left(0\right)=0$
and has derivative $g'\left(x\right)=-2+\left(2+x\right)e^{-x}$ with
$g'\left(0\right)=0$. At last $g''\left(x\right)=-\left(1+x\right)e^{-x}< 0
$. Hence $g'\left(x\right)< 0 $, $g\left(x\right)< 0 $ and $ h
'\left(x\right)< 0 $ for $x>0$. As the function $\Phi _1\left(a
\right),a \in\Theta  $ is strictly decreasing, the equation 
\begin{align*}
R_{1,T}=\Phi _1\left(a\right),\qquad a \in\Theta 
\end{align*}
has a unique solution $a =a_\zT^*$ for the values $\Phi _1\left(\beta
\right) < R_{1,T} < \Phi _1\left(\alpha \right)$.
\begin{align*}
a _\zT^*=H\left(\frac{R_{1,T}-\sigma
  ^2}{f^2b^2}\right)&=H\left(\frac{\Phi _1\left(a _0 \right)-\sigma
  ^2}{f^2b^2}\right)+H'\left(\frac{\Phi _1\left(a_0 \right)-\sigma
  ^2}{f^2b^2}\right)\frac{\left(R_{1,T}-\Phi _1\left(a _0
  \right)\right)}{f^2b^2} \\
&=a _0+\frac{R_{1,T}-\Phi _1\left(a _0
  \right)}{h'\left(a _0\right)f^2b^2}+O\left(T^{-1}\right).
\end{align*}
Hence
\begin{align*}
\sqrt{T}\left(a _\zT^*-a _0\right)\Longrightarrow \xi _a\sim {\cal
  N}\left(0,D_a\left(a_0\right)^2\right),\qquad \quad D_a\left(a
_0\right)^2=\frac{K_{1,1}\left(a _0\right)}{h'\left(a_0\right)^2f^4b^4}.
\end{align*}

As the derivative $h'\left(x\right),x>\alpha $ is separated from zero, we have
the estimate: for any $p\geq 2$ there exists a constant $C>0$ such that
\begin{align*}
\sup_{a_0\in\KK}T^{p/2}\Ex_{a_0}\left|a
_\zT^*-a _0\right|^p\leq C 
\end{align*}
and 
the convergence of moments
\begin{align*}
\lim_{T\rightarrow \infty }T^{p/2}\Ex_{a_0}\left|a
_\zT^*-a_0\right|^p=\Ex_{a _0}\left|\xi _a\right|^p.
\end{align*}
holds. 

%\subsubsection
{\bf More general model}

All considered cases of estimation of  one-dimensional parameters are particular
cases of a slightly more general model of observations 
\begin{align*}
{\rm d}X_t&=f\left(\vartheta \right)Y_t{\rm d}t+\sigma {\rm d}W_t,\qquad \qquad
X_0,\qquad 0\leq t\leq T,\\
{\rm d}Y_t&=-a\left(\vartheta \right)Y_t{\rm d}t+b\left(\vartheta \right){\rm d}V_t,\qquad \quad Y_0,\quad t\geq 0
\end{align*}
where $f\left(\vartheta \right),a\left(\vartheta \right),b\left(\vartheta
\right),\vartheta \in\Theta =\left(\alpha ,\beta \right)$ are known smooth
functions. To estimate the parameter $\vartheta $ by observations
$X^T=\left(X_t,0\leq t\leq T\right)$ we can use the MME
\begin{align}
\label{3-21a} 
R_{1,T}&=\frac{1}{T}\sum_{k=1}^{T}\left[X_k-X_{k-1}\right]^2,\qquad \quad
\Psi\left(\vartheta \right)=\frac{f\left(\vartheta \right)^2b\left(\vartheta
  \right)^2}{a\left(\vartheta \right)^2}\left[e^{-a\left(\vartheta
    \right)}-1+a\left(\vartheta \right)\right],\nonumber\\
 \vartheta^* _T&={\rm arg}\inf_{\vartheta \in\Theta
}\left|R_{1,T}-\Psi\left(\vartheta \right)\right|,
\end{align}

\begin{proposition}
\label{P4} Let the functions  $f\left(\vartheta \right),a\left(\vartheta \right),b\left(\vartheta
\right),\vartheta \in\left[\alpha ,\beta \right]$  have two continuous
derivatives, and 
\begin{align*}
\inf_{\vartheta \in\Theta }\left|\dot \Psi\left(\vartheta \right)\right|>0.
\end{align*}
Then the MME $\check \vartheta _T$ is uniformly consistent and for any  $p\geq 2$
\begin{align}
\label{3-21} 
\sup_{\vartheta _0\in\KK}T^{p/2}\Ex_{\vartheta _0}\left|\check \vartheta _T-\vartheta _0 \right|^p\leq C.
\end{align}

\end{proposition}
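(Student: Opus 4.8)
The plan is to reduce the proposition to the uniform moment bound for $R_{1,T}$ already established in Proposition \ref{3-P1}, through a purely deterministic inversion of the map $\Psi$. Denote by $\vartheta^*_T$ (the estimator $\check\vartheta_T$ of the statement) the minimum-distance estimator of \eqref{3-21a}, and recall that, by Proposition \ref{3-P1}, $R_{1,T}$ converges uniformly to $\Psi(\vartheta)$ and satisfies, for every $p\geq 2$,
\begin{align*}
\sup_{\vartheta\in\KK}T^{p/2}\Ex_\vartheta\left|R_{1,T}-\Psi(\vartheta)\right|^p\leq C,
\end{align*}
which is \eqref{3-12} restricted to the first coordinate.

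First I would convert the hypothesis $\inf_{\vartheta\in\Theta}|\dot\Psi(\vartheta)|>0$ into a global two-sided control on $\Psi$. Since $f,a,b$ are twice continuously differentiable on $[\alpha,\beta]$, the function $\Psi$ is $C^1$ and $\dot\Psi$ is continuous on the compact $[\alpha,\beta]$, so the infimum condition extends to $|\dot\Psi(\vartheta)|\geq c_0>0$ on all of $[\alpha,\beta]$. A continuous non-vanishing derivative keeps a constant sign, hence $\Psi$ is strictly monotone and the mean value theorem gives the lower Lipschitz bound $|\Psi(\vartheta)-\Psi(\vartheta')|\geq c_0|\vartheta-\vartheta'|$ for all $\vartheta,\vartheta'\in[\alpha,\beta]$.

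Next I would exploit the minimality defining $\vartheta^*_T$. Passing to the compact closure $\bar\Theta$ so that a minimizer exists, the true value $\vartheta_0$ is a competitor, whence $|R_{1,T}-\Psi(\vartheta^*_T)|\leq|R_{1,T}-\Psi(\vartheta_0)|$ and, by the triangle inequality, $|\Psi(\vartheta^*_T)-\Psi(\vartheta_0)|\leq 2|R_{1,T}-\Psi(\vartheta_0)|$. Combined with the lower Lipschitz bound, this produces the single pathwise estimate
\begin{align*}
\left|\vartheta^*_T-\vartheta_0\right|\leq\frac{2}{c_0}\left|R_{1,T}-\Psi(\vartheta_0)\right|,
\end{align*}
which is the heart of the matter.

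From here both claims follow with no further probabilistic work. Raising to the power $p$, applying $\Ex_{\vartheta_0}$, multiplying by $T^{p/2}$ and invoking the displayed moment bound yields \eqref{3-21}. For uniform consistency, the same inequality gives $\{|\vartheta^*_T-\vartheta_0|>\nu\}\subset\{|R_{1,T}-\Psi(\vartheta_0)|>c_0\nu/2\}$ for each $\nu>0$, so part 1 of Proposition \ref{3-P1} (uniform-in-$\vartheta_0$ convergence of $R_{1,T}$ in probability) closes the argument. I expect the only genuinely delicate point to be deterministic: verifying that $\Psi$ is globally invertible with a uniform lower Lipschitz constant and that the minimizer is attained, which is why one works on $\bar\Theta$; all the analytic effort has already been spent in proving Proposition \ref{3-P1}.
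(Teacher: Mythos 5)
Your proof is correct, and while the consistency half coincides with the paper's own argument, your treatment of the moment bound takes a genuinely different and in fact tidier route. For consistency the paper does exactly what you do: it combines the minimality of $\check\vartheta_T$ with the triangle inequality to get $2\left|R_{1,T}-\Psi\left(\vartheta_0\right)\right|\geq \inf_{\left|\vartheta -\vartheta _0\right|>\nu}\left|\Psi\left(\vartheta \right)-\Psi\left(\vartheta_0\right)\right|$ on the event $\left\{\left|\check\vartheta_T-\vartheta_0\right|\geq\nu\right\}$, and bounds that separation from below by $\check\kappa\,\nu$ via the mean value theorem and the hypothesis $\inf_\vartheta |\dot\Psi\left(\vartheta\right)|>0$ --- precisely your lower Lipschitz bound. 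Where you diverge is \eqref{3-21}: the paper switches to a second argument, noting that with probability tending to one $\check\vartheta_T$ solves $R_{1,T}=\Psi\left(\check\vartheta_T\right)$ exactly, expanding $R_{1,T}-\Psi\left(\vartheta_0\right)=\left(\check\vartheta_T-\vartheta_0\right)\dot\Psi\left(\bar\vartheta_T\right)$ and dividing by $\dot\Psi\left(\bar\vartheta_T\right)$, whereas you simply raise the single pathwise inequality $\left|\check\vartheta_T-\vartheta_0\right|\leq 2c_0^{-1}\left|R_{1,T}-\Psi\left(\vartheta_0\right)\right|$ to the $p$-th power and invoke \eqref{3-12}. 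Your version is tighter on a real point: the paper's exact-solution representation holds only on an event of probability tending to one, and the contribution of the complementary event (where $R_{1,T}$ falls outside the range of $\Psi$ and the minimizer sits at the boundary of $\bar\Theta$) to $T^{p/2}\Ex_{\vartheta_0}\left|\check\vartheta_T-\vartheta_0\right|^p$ is never explicitly controlled there, while your inequality holds on every sample path once the minimizer is taken over the compact $\bar\Theta$, so no exceptional event needs handling. What the paper's mean-value route buys in exchange is the exact representation $\sqrt{T}\left(\check\vartheta_T-\vartheta_0\right)=\sqrt{T}\left(R_{1,T}-\Psi\left(\vartheta_0\right)\right)/\dot\Psi\left(\bar\vartheta_T\right)$, which identifies the asymptotic distribution of the estimator (as exploited in the asymptotic normality statements elsewhere in that section), not merely an upper bound on the risk; your two-sided inequality alone cannot deliver that.
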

\begin{proof} Note that for any $\nu >0$
\begin{align*}
g\left(\vartheta _0,\nu \right)&=\inf_{\left|\vartheta -\vartheta _0\right|>\nu
}\left|\Psi\left(\vartheta \right)-\Psi\left(\vartheta_0 \right) \right|\geq
\inf_{\left|\vartheta -\vartheta _0\right|>\nu } \left|\vartheta -\vartheta _0
\right| \left|\dot \Psi \left(\bar\vartheta \right)\right| \\
&\geq \nu \inf_{ \vartheta  \in \Theta  }\left|\dot \Psi \left(\vartheta
\right)\right|\geq  \check\kappa\; \nu ,
\end{align*}
where  $\check \kappa >0$. Then 
the consistency follows from the standard relations
\begin{align*}
\Pb_{\theta _0}\left(\left|\check \vartheta _T-\vartheta _0 \right|\geq  \nu
\right)&=\Pb_{\theta _0}\left(\inf_{ \left| \vartheta -\vartheta _0 \right|\leq
  \nu} \left| R_{1,T}-\Psi \left(\vartheta \right)\right| \geq \inf_{ \left|
  \vartheta -\vartheta _0 \right|>\nu} \left| R_{1,T}-\Psi \left(\vartheta
\right)\right| \right)\\
& \leq \Pb_{\theta _0}\left(2\left| R_{1,T}-\Psi \left(\vartheta_0
\right)\right| \geq \inf_{ \left| 
  \vartheta -\vartheta _0 \right|>\nu} \left| \Psi \left(\vartheta
\right) -\Psi \left(\vartheta_0
\right)\right| \right)\\
&\leq \frac{4}{g\left(\vartheta _0,\nu \right)^2  }\Ex_{\vartheta
  _0}\left|\check \vartheta _T-\vartheta _0 \right|^2 \leq
\frac{C}{Tg\left(\vartheta _0,\nu \right)^2 }\rightarrow 0. 
\end{align*}

The estimator $\check \vartheta _T $ with probability tending to 1 is
solution of the equation 
\begin{align*}
0=R_{1,T}-\Psi \left(\check \vartheta _T \right)=R_{1,T}-\Psi \left( \vartheta
_0 \right)-\left(\check \vartheta _T-\vartheta _0 \right)\dot \Psi \left(\bar
\vartheta _T \right) 
\end{align*}
where $\left|\bar\vartheta_\zT -\vartheta _0\right|\leq \left|\check\vartheta_\zT -\vartheta _0\right|$   and therefore
\begin{align*}
\sqrt{T}\left(\check \vartheta _T-\vartheta _0
\right)=\frac{\sqrt{T}\left(R_{1,T}-\Psi \left( \vartheta 
_0 \right)\right)  }{\dot \Psi \left(\bar \vartheta _T \right)  }.
\end{align*}
Hence (see \eqref{3-12})
\begin{align*}
T^{p/2}\Ex_{\vartheta _0}\left|\check \vartheta _T-\vartheta _0  \right|^p\leq
\frac{T^{p/2}\Ex_{\vartheta _0} \left| R_{1,T}-\Psi \left( \vartheta
_0 \right)   \right|^p    }{\inf_{\vartheta \in\Theta }\left|\dot
  \Psi\left(\vartheta \right)\right|}\leq C. 
\end{align*}

\end{proof}

%\subsubsection
{\bf Two-dimensional parameter $\vartheta =\left(a,f\right)$}

Suppose that
 $$
\vartheta=\left(\theta_1,\theta_2\right)^\top
=\left(a,f\right)^\top\in \Theta= \left(\alpha _a,\beta _a\right)\times \left(\alpha
_f,\beta _f\right),  \qquad \alpha _a>0\qquad \alpha _f>0.
$$ The MME $\vartheta _\zT^*=   (\theta _{1,\zT}^*,\theta _{2,\zT}^*)^\top=   (a_\zT^*,f_\zT^*)^\top$ is defined as
solution $\vartheta =\vartheta _\zT^*$ of the system of equations
\begin{align*}
\Phi _1\left(\vartheta \right)=R_{1,T},\qquad \qquad \Phi _2\left(\vartheta
\right)=R_{2,T},\qquad \vartheta\in \Theta 
\end{align*}
with the  usual assumption that these equations have solution $\vartheta
_\zT^*\in\Theta  $. 

Recall that 
\begin{align*}
\Phi_1 \left(\vartheta \right)=\frac{f^2b^2}{ a^3}\left[e^{-a }-1+a
  \right]+\sigma^2,\qquad 
\quad \Phi_2 \left(\vartheta \right)=\frac{f^2b^2}{2a^3}\left[1-e^{-a }
  \right]^2.
\end{align*}
Therefore this system can be solved in two steps as follows:
the equation  
\begin{align}
\label{3-31}
\frac{e^{-a_\zT^*}-1+a_\zT^*}{\left(1-e^{-a_\zT^*}\right)^2}=\frac{R_{1,T}-\sigma
  ^2}{2R_{2,T}},
\end{align}
 can be solved the first  ($ $   and then having solution $a_\zT^* $ of this equation we
 define the second estimator
\begin{align}
\label{3-32}
 f_\zT^*=\frac{2(a_\zT^*)^{3/2}R_{2,T}^{1/2}}{b^2\left(1-e^{-a_\zT^*}\right)^2}.
\end{align}
Let us show that the function
\begin{align*}
h\left(x\right)=\frac{e^{-x}-1+x}{\left(1-e^{-x}\right)^2},\qquad x >0
\end{align*}
is strictly increasing.  Its derivative is $h'\left(x\right)
=\left(1-e^{-x}\right)^{-3}g\left(x\right)$ where the function
$g\left(x\right)=1-2xe^{-x}- e^{-2x}$ has the value $g\left(0\right)=0$ and
$g'\left(x\right)=2e^{-x}\left(x-1+ e^{-x}\right)>0 $ for $x>0$.  Therefore
the first equation has a unique solution $a_\zT^* $ and the second equation
gives an  explicit expression for the estimate $f_\zT^* $.

Note that
\begin{align*}
h\left(a_0\right)&=\frac{\Phi _1\left(\vartheta _0\right)-\sigma ^2}{2\Phi
  _2\left(\vartheta _0\right)} ,\qquad \frac{R_{1,T}-\sigma
  ^2}{2R_{2,T}}\longrightarrow h\left(a_0\right),\qquad a_\zT^*\longrightarrow a_0,\\
f_0&=\frac{2a_0^{3/2} \Phi
  _2\left(\vartheta _0\right)^{1/2}}{b^2\left(1-e^{-a_0}\right)},\qquad\qquad
\frac{2(a_\zT^*)^{3/2}R_{2,T}^{1/2}}{b^2\left(1-e^{-a_\zT^*}\right)^2}\longrightarrow 
f_0 
 . 
\end{align*}
Therefore
\begin{align*}
h\left(a_\zT^*\right)=h\left(a_0\right)+h'\left(a_0\right)\left(a_\zT^*-a_0
\right)+O\left( \left(a_\zT^*-a_0
\right)^2   \right)=\frac{R_{1,T}-\sigma
  ^2}{2R_{2,T}}
\end{align*}
and using the convergence \eqref{3-20} we obtain 
\begin{align*}
\sqrt{T}\left(a_\zT^*-a_0\right)&=h'\left(a_0\right)^{-1}\sqrt{T}\left(\frac{R_{1,T}-\sigma
  ^2}{2R_{2,T}}- \frac{\Phi _1\left(\vartheta _0\right)-\sigma ^2}{2\Phi
  _2\left(\vartheta _0\right)} \right)+O\left( \sqrt{T}\left(a_\zT^*-a_0
\right)^2   \right)\\
&=\frac{\sqrt{T}\left(R_{1,T}-\Phi _1\left(\vartheta
  _0\right)\right)}{2h'\left(a_0\right)\Phi _2\left(\vartheta
  _0\right)}-\frac{\left(\Phi _1\left(\vartheta
  _0\right)-\sigma ^2\right) }{2h'\left(a_0\right)\Phi _2\left(\vartheta
  _0\right)}\sqrt{T} \left(R_{2,T}-\Phi _2\left(\vartheta
_0\right)\right)+o\left(1\right)\\
&\Longrightarrow \frac{\xi _1}{2h'\left(a_0\right)\Phi _2\left(\vartheta
  _0\right)}-\frac{\left(\Phi _1\left(\vartheta
  _0\right)-\sigma ^2\right)\;\xi _2 }{2h'\left(a_0\right)\Phi _2\left(\vartheta
  _0\right)}\equiv \xi _3. 
\end{align*}
For the second estimator we have
\begin{align*}
&\sqrt{T}\left(f_\zT^*-f_0\right)=\sqrt{T}\left(\frac{2\left(a_\zT^*\right)^{3/2}R_{2,T}^{1/2}}{b^2
  \left(1-e^{-a_\zT^*}\right)^2}-\frac{2a_0^{3/2} \Phi _2\left(\vartheta
  _0\right)^{1/2}}{b^2\left(1-e^{-a_0}\right)} \right)\\
&\qquad = n'\left(a_0\right)\Phi _2\left(\vartheta
  _0\right)^{1/2} \sqrt{T}\left(a_\zT^*-a_0 \right)+   \frac{n\left(a_0\right)\sqrt{T}\left(R_{2,T} -\Phi _2\left(\vartheta
  _0\right)\right) }{2\Phi _2\left(\vartheta
  _0\right)^{1/2}}+o\left(1\right)\\
&\qquad \Longrightarrow  n'\left(a_0\right)\Phi _2\left(\vartheta
  _0\right)^{1/2}  \xi _3+   \frac{n\left(a_0\right) }{2\Phi _2\left(\vartheta
  _0\right)^{1/2}}\;\xi _2\equiv \xi _4.
\end{align*}
Let us denote $\xi^*=\left(\xi _3,\xi _4\right)^\top$ the Gaussian vector with
the corresponding covariance matrix ${\bf Q}\left(\vartheta _0\right)$.
Therefore we proved the following proposition.
\begin{proposition}
\label{3-P5} The MME $\vartheta _\zT^*$ is uniformly consistent, uniformly on
compacts $\KK \subset \Theta $ asymptotically normal
\begin{align*}
\sqrt{T}\left(\vartheta _\zT^*-\vartheta _0\right)\Longrightarrow \xi^*\sim
     {\cal N}\left(0,{\bf Q}\left(\vartheta _0\right)\right),
\end{align*}
the polynomial moments converge and the estimate
\begin{align}
\label{3-24}
\sup_{\vartheta _0\in\KK}T^{p/2}\Ex_{\vartheta _0}\left\|\vartheta _\zT^*-\vartheta _0\right\|^p\leq C
\end{align}
holds.
\end{proposition}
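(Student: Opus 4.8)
The plan is to exploit the explicit two-step inversion defining $\vartheta_\zT^*$ and to reduce the whole two-dimensional statement to the behaviour of the statistic $R_\zT$, which is already under control through Proposition \ref{3-P1}. The system $\Phi(\vartheta)=R_\zT$ with $\Phi=(\Phi_1,\Phi_2)^\top$ is solved by first recovering $a_\zT^*$ from the scalar equation \eqref{3-31} and then reading off $f_\zT^*$ explicitly from \eqref{3-32}; equivalently $\vartheta_\zT^*=\Phi^{-1}(R_\zT)$. The first point I would record is that $\Phi$ is, on every compact $\KK\subset\Theta$, a smooth diffeomorphism onto its image with non-degenerate Jacobian $\dot\Phi$. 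Indeed, the function $h$ of \eqref{3-31}, through which $a$ is recovered, has already been shown strictly increasing (via $g(x)=1-2xe^{-x}-e^{-2x}$), so $h'$ is separated from zero on $\KK$; since $\Phi_1-\sigma^2$ and $\Phi_2$ are proportional to $f^2$ with $a$-dependent coefficients whose ratio $h(a)$ has nonzero derivative, the Jacobian $\dot\Phi$ is non-degenerate there, and $\Phi_2(\vartheta_0)>0$ because $a>0$, $f\neq0$, $b\neq0$. Consequently $\Phi^{-1}$ is smooth with bounded derivatives, hence Lipschitz, on a neighbourhood of $\Phi(\KK)$.

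Consistency I would then obtain exactly as in Proposition \ref{P4}: the non-degeneracy of $\dot\Phi$ yields the separation bound $\inf_{|\vartheta-\vartheta_0|>\nu}\|\Phi(\vartheta)-\Phi(\vartheta_0)\|\geq\kappa\,\nu$ uniformly in $\vartheta_0\in\KK$, and the uniform convergence $R_\zT\to\Phi(\vartheta_0)$ of Proposition \ref{3-P1}(1) then forces $\vartheta_\zT^*$ into any prescribed $\nu$-ball with probability tending to one. For the asymptotic normality I would apply the delta method: Taylor-expanding $\Phi^{-1}$ about $\Phi(\vartheta_0)$ gives
\[
\sqrt{T}\left(\vartheta_\zT^*-\vartheta_0\right)=\left[\dot\Phi(\vartheta_0)\right]^{-1}\sqrt{T}\left(R_\zT-\Phi(\vartheta_0)\right)+o_{\Pb}(1),
\]
and since $\sqrt{T}(R_\zT-\Phi(\vartheta_0))\Longrightarrow\xi_*\sim{\cal N}(0,{\bf K}(\vartheta_0))$ by \eqref{3-20}, the left-hand side converges to the Gaussian vector $\xi^*=[\dot\Phi(\vartheta_0)]^{-1}\xi_*$ with covariance ${\bf Q}(\vartheta_0)=[\dot\Phi(\vartheta_0)]^{-1}{\bf K}(\vartheta_0)[\dot\Phi(\vartheta_0)]^{-\top}$. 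This is just the compact form of the componentwise computation already displayed for $\xi_3$ and $\xi_4$ in the text.

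For the uniform moment bound \eqref{3-24} I would combine the Lipschitz property of $\Phi^{-1}$ with the moment estimate \eqref{3-12}. On the event $A_T$ that $R_\zT$ stays in a fixed neighbourhood of $\Phi(\vartheta_0)$ (in particular $R_{2,T}\geq\tfrac12\Phi_2(\vartheta_0)$), one has $\|\vartheta_\zT^*-\vartheta_0\|\leq C\|R_\zT-\Phi(\vartheta_0)\|$, whence $T^{p/2}\Ex_{\vartheta_0}(\1_{A_T}\|\vartheta_\zT^*-\vartheta_0\|^p)\leq C\,T^{p/2}\Ex_{\vartheta_0}\|R_\zT-\Phi(\vartheta_0)\|^p\leq C$. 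The main obstacle, as usual for estimators built by inverting a statistic, is the complementary large-deviation event: there $\Phi^{-1}$ is no longer Lipschitz and the denominator $R_{2,T}$ in \eqref{3-31}--\eqref{3-32} may come close to zero. I would control it by bounding $\|\vartheta_\zT^*-\vartheta_0\|$ on $A_T^c$ by the finite diameter of $\Theta$ and estimating $\Pb_{\vartheta_0}(A_T^c)$ through Chebyshev's inequality together with \eqref{3-12} taken at a sufficiently large exponent, which makes that probability decay faster than any power of $T^{-1}$. The bound \eqref{3-24} then follows, and since $\|\sqrt{T}(\vartheta_\zT^*-\vartheta_0)\|^p$ is thereby uniformly integrable, the convergence of the polynomial moments is a direct consequence of the weak convergence established above.
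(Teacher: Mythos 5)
Your proposal is correct and takes essentially the same route as the paper: both reduce the statement to Proposition \ref{3-P1} by inverting $\Phi$ through the two-step procedure \eqref{3-31}--\eqref{3-32} (strict monotonicity of $h$, then the explicit formula for $f_\zT^*$), and your matrix-form delta method with ${\bf Q}\left(\vartheta _0\right)=[\dot\Phi\left(\vartheta _0\right)]^{-1}{\bf K}\left(\vartheta _0\right)[\dot\Phi\left(\vartheta _0\right)]^{-\top}$ is, as you note yourself, just the compact rewriting of the paper's componentwise computation of $\xi _3$ and $\xi _4$. The one point where you go beyond the text is the bound \eqref{3-24}, which the paper dismisses with ``standard arguments'': your splitting into the event where $R_\zT$ stays near $\Phi\left(\vartheta _0\right)$ (where the Lipschitz property of $\Phi^{-1}$ transfers \eqref{3-12} directly) and its complement (handled by the boundedness of $\Theta$ and a Chebyshev bound from \eqref{3-12} at an exponent $q>p$, so that $T^{p/2}\Pb_{\vartheta _0}\left(A_T^c\right)\leq CT^{\left(p-q\right)/2}\rightarrow 0$) is exactly the standard argument intended, and it correctly yields both \eqref{3-24} and, via uniform integrability, the convergence of moments.
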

The proof of the estimate \eqref{3-24} can be carried out using standard
arguments.

%\subsubsection
{\bf Two-dimensional parameter $\vartheta =\left(a,b\right)$}

Recall that there is no real difference between estimation $f$ and $b$ because
the model depends on the product $fb$. Therefore for the definition of 
estimator $a_\zT^*$ see \eqref{3-31} and the estimator $b_\zT^*$   is 
  almost the same as   in \eqref{3-32}, where we replace $b$ by $f$
\begin{align*}
b_\zT^*=\frac{2(a_\zT^*)^{3/2}R_{2,T}^{1/2}}{f^2\left(1-e^{-a_\zT^*}\right)^2}.
\end{align*}

\section{One-step MLE-process.}

Consider once more the partially observed system
\begin{align*}
{\rm d}X_t&=f\left(\vartheta \right)Y_t\;{\rm d}t+\sigma\; {\rm d}W_t,\qquad
X_0=0,\qquad 0\leq t\leq T,\\
{\rm d}Y_t&=-a\left(\vartheta \right)Y_t\;{\rm d}t+b\left(\vartheta \right) {\rm
  d}V_t,\qquad  Y_0.
\end{align*}
The unknown parameter is one-dimensional, $\vartheta \in\Theta =\left(\alpha
,\beta \right)$ and we have to estimate it by observations
$X^T=\left(X_t,0\leq t\leq T\right)$. The initial value $Y_0\sim {\cal
  N}\left(0,d_y^2\right)$ does not depend of the independent Wiener processes
$W^T$ and $V^T$.   The MLE and BE described in Theorem \ref{T1} have nice
asymptotic properties and in particular are asymptotically efficient, but the
calculation of these estimators is computationally rather difficult problem.
For example, the construction of the BE \eqref{2-MleBe} requires the knowledge of
$L\left(\vartheta ,X^T\right)$  (see \eqref{2-LR-T}) for all $\vartheta \in\Theta $. Even if we
suppose that the system is in stationary regime with the initial value
$m\left(\vartheta ,0\right)\sim {\cal N}\left(0,\gamma_* \left(\vartheta
\right)\right)$, then we need not to seek the solution of Riccati equation,
but we need always the trajectories $m\left(\vartheta ,t\right), 0\leq t\leq
T,\vartheta \in\Theta $. 

The goal of this section is to propose and to study One-step MLE-process which
has many good properties, but first we will slightly simplify the exposition
supposing that the system is already in stationary regime, i.e. the initial
value $\gamma \left(\vartheta ,0\right)$ of Riccati equation is $\gamma
\left(\vartheta ,0\right)= \gamma _*\left(\vartheta \right)$ and therefore
$\gamma \left(\vartheta ,t\right)\equiv \gamma _*\left(\vartheta \right) $ for
all $t\geq 0$. Moreover, we put $M\left(\vartheta ,t\right)=f\left(\vartheta
\right)m(\vartheta ,t) $. Recall that $\Gamma \left(\vartheta \right)=\gamma_*
\left(\vartheta \right)f\left(\vartheta \right)^2\sigma ^{-2}=r\left(\vartheta
\right) -a\left(\vartheta \right) $, $r\left(\vartheta
\right)=\left(a\left(\vartheta \right)^2+ b(\vartheta
)^2f(\vartheta)^{2}\sigma ^{-2}\right)^{-1/2}$. Then the equations for
$M(\vartheta ,t),t\geq 0$ and $\dot M(\vartheta ,t),t\geq 0$ will be
\begin{align*}
{\rm d}M\left(\vartheta ,t\right)&=- r\left(\vartheta \right) M\left(\vartheta
,t\right){\rm d} t+ \Gamma 
\left(\vartheta \right) {\rm d}X_t,\\
{\rm d}\dot M\left(\vartheta ,t\right)&=- r\left(\vartheta \right)\dot
M\left(\vartheta ,t\right){\rm d} t- \dot r
  \left(\vartheta \right) M\left(\vartheta ,t\right){\rm d} t+  \dot  \Gamma
\left(\vartheta \right) {\rm d}X_t, 
\end{align*}
with the initial values $M\left(\vartheta ,0\right)$ and $\dot
M\left(\vartheta ,0\right)$.  The integral stationary presentation of
$M\left(\vartheta ,t\right),$ $t\geq 0$ is given in \eqref{2-13}. The integral
presentation for the derivative $\dot M\left(\vartheta ,t\right),t\geq 0$ can
be obtained by direct differentiation of the equation \eqref{2-13}. The
initial values $ M\left(\vartheta ,0\right),$ $\dot M\left(\vartheta ,0\right) $
can be obtained from these representations too.

The One-step MLE-process is constructed in two steps. First, by
observations $X^{\tau _\zT}$ on the relatively small {\it learning} interval
$\left[0,\tau _\zT\right],\tau _\zT=\left[T^\delta\right] , \delta
\in\left(\frac{1}{2},1\right) $ we construct a MME $\vartheta^* _{\tau
  _\zT}$ (see \eqref{3-21a}) and then using this estimator as preliminary one  we
propose One-step MLE-process. Here $\left[A\right]$ means the entire part of
$A$. 

Remind the notations
\begin{align*}
R_{1,\tau _\zT}&=\frac{1}{{\tau_\zT}}\sum_{k=1}^{\tau
  _\zT}\left[X_k-X_{k-1}\right]^2,\qquad \Psi \left(\vartheta \right)=
\frac{f(\vartheta )^2b(\vartheta )^2}{a(\vartheta )^2}\Bigl(e^{a(\vartheta
  )}-1+a(\vartheta )\Bigr)+\sigma ^2
,\\ 
\vartheta^*_{\tau_\zT}&={\rm arg}\inf_{\vartheta \in\Theta }\left|R_{1,\tau
  _\zT}-\Psi \left(\vartheta \right) \right| ,\qquad {\rm I}\left(\vartheta
\right)=\frac{\dot a\left(\vartheta \right)^2}{2a\left(\vartheta
  \right)}-\frac{2\dot a\left(\vartheta \right)\dot r\left(\vartheta
  \right)}{a\left(\vartheta \right)+r\left(\vartheta \right)}+\frac{\dot
  r\left(\vartheta \right)^2}{2r\left(\vartheta \right)}
\end{align*}
and the estimator-process
\begin{align}
\label{52}
\vartheta _{t,T}^\star=\vartheta^* _{\tau_\zT} +\frac{1}{{\rm
    I}\bigl(\vartheta^* _{\tau_\zT}
  \bigr)\left({t-{\tau_\zT}}\right)}\int_{{\tau_\zT}}^{t}\frac{\dot
  M(\vartheta^* _{\tau_\zT} ,s)}{\sigma ^2}\left[{\rm
    d}X_s-M(\vartheta^* _{\tau_\zT} ,s){\rm d}s\right],\;
          {\tau_\zT}<t\leq T.
\end{align}

The random processes  $M(\vartheta^* _{\tau_\zT} ,t),\dot M(\vartheta^*
_{\tau_\zT} ,t),\tau_\zT\leq t\leq T  $ satisfy the equations
\begin{align*}
{\rm d} M(\vartheta^* _{\tau_\zT} ,t)&=-r(\vartheta^*
_{\tau_\zT})M(\vartheta^* _{\tau_\zT} ,t){\rm d}t+ \Gamma (\vartheta^*
_{\tau_\zT} ){\rm d}X_t,\\
{\rm d}\dot  M(\vartheta^* _{\tau_\zT} ,t)&=-r(\vartheta^*
_{\tau_\zT})\dot M(\vartheta^* _{\tau_\zT} ,t){\rm d}t-\dot r(\vartheta^*
_{\tau_\zT}) M(\vartheta^* _{\tau_\zT} ,t){\rm d}t+ \dot \Gamma (\vartheta^*
_{\tau_\zT} ){\rm d}X_t.
\end{align*}
The initial values $M(\vartheta^* _{\tau_\zT} ,\tau_\zT),\dot M(\vartheta^*
  _{\tau_\zT} ,\tau_\zT) $ are defined as follows.
We have
\begin{align*}
M(\vartheta ,\tau_\zT)&= M\left(\vartheta,0\right)
e^{-r(\vartheta)\tau_\zT } +   \Gamma (\vartheta)e^{-r(\vartheta)\tau_\zT
}\int_{0}^{\tau_\zT}e^{r(\vartheta)s }{\rm d}X_s\\
&= M\left(\vartheta,0\right)
e^{-r(\vartheta)\tau_\zT } +   \Gamma (\vartheta)\left[X_{\tau_\zT }
  -r(\vartheta)\int_{0}^{\tau_\zT} e^{-r(\vartheta)\left(\tau_\zT-s \right)
}     X_s{\rm d}s\right],
\end{align*}
and the initial value is
\begin{align}
\label{53}
M(\vartheta^* _{\tau_\zT}  ,\tau_\zT)&= M(\vartheta^* _{\tau_\zT} ,0)
e^{-r(\vartheta^* _{\tau_\zT} )\tau_\zT }\nonumber\\
&\qquad \qquad  + \Gamma (\vartheta^* _{\tau_\zT} )\left[X_{\tau_\zT }
  -r(\vartheta^* _{\tau_\zT} )\int_{0}^{\tau_\zT} e^{-r(\vartheta^*
    _{\tau_\zT} )\left(\tau_\zT-s \right) 
}     X_s{\rm d}s\right].
\end{align}
For $\dot M(\vartheta^*
_{\tau_\zT} ,\tau_\zT)$ we have a similar representation
\begin{align}
\label{54}
\dot M(\vartheta^* _{\tau_\zT} ,\tau_\zT)&=\left[\dot M(\vartheta^*
  _{\tau_\zT} ,0)-M(\vartheta^* _{\tau_\zT} ,0) \dot r(\vartheta^* _{\tau_\zT}
  )\tau_\zT\right] e^{-r(\vartheta^* _{\tau_\zT} )\tau_\zT }+\dot \Gamma
(\vartheta^* _{\tau_\zT} )X_{\tau_\zT} \nonumber\\
 &\qquad -\left[ \dot \Gamma
  (\vartheta^* _{\tau_\zT} ) r (\vartheta^* _{\tau_\zT} )+\Gamma (\vartheta^*
  _{\tau_\zT} )\dot r (\vartheta^* _{\tau_\zT} )\right]\int_{0}^{\tau_\zT}
e^{-r(\vartheta^* _{\tau_\zT}\left(\tau_\zT-s \right) )} X_s{\rm
  d}s\nonumber\\
 &\qquad - \Gamma (\vartheta^* _{\tau_\zT} ) \dot
r(\vartheta^* _{\tau_\zT} )  r(\vartheta^* _{\tau_\zT}
)\int_{0}^{\tau_\zT} e^{-r(\vartheta^* _{\tau_\zT} )\left(\tau_\zT-s \right) }\left(\tau_\zT-s \right)
X_s{\rm d}s.
\end{align}

 Change the variables: $t=v T, v\in
 \left[\varepsilon_\zT,1\right],\varepsilon_\zT=\tau_\zT T^{-1}=T^{-1+\delta }\rightarrow 0 $
 and denote $\vartheta _\zT^\star\left(v\right)=\vartheta_{vT,T} ^\star,
 \varepsilon_\zT<v\leq 1 $.  
\begin{theorem}
\label{T2} Suppose that the following  conditions hold.
\begin{enumerate}
\item The functions $f\left(\vartheta
\right),a\left(\vartheta \right),b\left(\vartheta \right),\vartheta \in
\left[\alpha ,\beta \right]$ are   positive and twice 
continuously differentiable.
\item $\inf_{\vartheta \in\Theta }|\dot \Psi \left(\vartheta \right)|>0.   $

\item $ \inf_{\vartheta \in\Theta }\left(\left|\dot a\left(\vartheta
\right)\right|+\left|\dot r\left(\vartheta \right)\right|\right) >0   $.
\end{enumerate}
Then  One-step MLE-process  $\vartheta_\zT ^\star\left(v \right),
\varepsilon_\zT<v \leq 1  $ with $\delta \in \left(1/2, 1\right)$ is uniformly
on compacts $\KK\subset\Theta $
consistent: for any $\nu >0$ and any $\varepsilon _0 \in (0,1]$
\begin{align*}
\lim_{T\rightarrow \infty }\sup_{\vartheta _0\in\KK}\Pb_{\vartheta
  _0}\left(\sup_{\varepsilon _0\leq v\leq 1}\left|\vartheta_\zT 
^\star\left(v \right)-\vartheta _0 
\right|>\nu \right)=  0, 
\end{align*}
 asymptotically normal
\begin{align*}
\sqrt{T}\left(\vartheta_\zT ^\star\left(v \right)-\vartheta _0
\right)\Longrightarrow \zeta _v\sim {\cal N}\left(0,v^{-1} {\rm
  I}\left(\vartheta_0 \right)^{-   1}\right),
\end{align*}
and the moments   converge: for any $p\geq 1$ 
\begin{align*}
T^{p/2}\Ex_{\vartheta _0}\left|\vartheta_\zT ^\star\left(v \right)-\vartheta
_0 \right|^p\longrightarrow  \Ex_{\vartheta _0}\left|\zeta _v \right|^p.
\end{align*}
The random process  $\eta _\zT\left(v\right)=v\sqrt{T{\rm
  I}\left(\vartheta_0 \right)}\left(\vartheta_\zT ^\star\left(v \right)-\vartheta _0
\right),\varepsilon _0\leq v\leq 1 $
converges in distribution in the measurable space (${\cal
  C}\left[\varepsilon _0,1\right],  {\goth B}$) to the Wiener process
$W\left(v\right),\varepsilon _0\leq v\leq 1 $.
\end{theorem}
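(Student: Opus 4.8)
The plan is to exploit the Le Cam structure of the estimator \eqref{52}. Writing the score accumulated on $[\tau_T,t]$ as $\Delta_t(\vartheta)=\int_{\tau_T}^t\sigma^{-2}\dot M(\vartheta,s)\bigl[{\rm d}X_s-M(\vartheta,s){\rm d}s\bigr]$, the One-step process reads $\vartheta^\star_{t,T}=\vartheta^*_{\tau_T}+\bigl[{\rm I}(\vartheta^*_{\tau_T})(t-\tau_T)\bigr]^{-1}\Delta_t(\vartheta^*_{\tau_T})$. Under $\Pb_{\vartheta_0}$ the innovation representation gives ${\rm d}X_s-M(\vartheta,s){\rm d}s=\bigl[M(\vartheta_0,s)-M(\vartheta,s)\bigr]{\rm d}s+\sigma\,{\rm d}\bar W_s$, and a second-order Taylor expansion of $M(\vartheta_0,s)-M(\vartheta^*_{\tau_T},s)$ in the first argument about $\vartheta^*_{\tau_T}$ yields
\begin{align*}
\Delta_t(\vartheta^*_{\tau_T})=(\vartheta_0-\vartheta^*_{\tau_T})\int_{\tau_T}^t\frac{\dot M(\vartheta^*_{\tau_T},s)^2}{\sigma^2}\,{\rm d}s+\int_{\tau_T}^t\frac{\dot M(\vartheta^*_{\tau_T},s)}{\sigma}\,{\rm d}\bar W_s+\rho_t,
\end{align*}
where $\rho_t$ is a quadratic remainder carrying the factor $(\vartheta_0-\vartheta^*_{\tau_T})^2$. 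The decisive point, exactly as in Le Cam's device, is that dividing the first term by ${\rm I}(\vartheta^*_{\tau_T})(t-\tau_T)$ and using that the empirical information $(t-\tau_T)^{-1}\int_{\tau_T}^t\sigma^{-2}\dot M(\vartheta,s)^2{\rm d}s$ tends to $\Ex_{\vartheta_0}\bigl[\dot M(\vartheta,0)^2\bigr]\sigma^{-2}$, which at $\vartheta=\vartheta_0$ equals ${\rm I}(\vartheta_0)$, cancels the preliminary error $(\vartheta^*_{\tau_T}-\vartheta_0)$ to leading order.

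So the heart of the matter is the ergodic analysis. First I would record that, in the stationary regime, $M(\vartheta,\cdot)$, $\dot M(\vartheta,\cdot)$, $\ddot M(\vartheta,\cdot)$ are stationary Gaussian processes governed by \eqref{2-13} and its $\vartheta$-derivatives, enjoying, uniformly for $\vartheta$ near $\vartheta_0$, finite moments of all orders and exponential covariance decay, since Conditions~1 and~3 guarantee that $r(\vartheta)>0$ and ${\rm I}(\vartheta)>0$ are bounded away from zero on compacts. The transient produced by initializing the filter at time $\tau_T$ through \eqref{53}--\eqref{54} decays like $e^{-r(\vartheta)(s-\tau_T)}$ and hence contributes only $O\bigl((t-\tau_T)^{-1}\bigr)$ to any time average. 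From this I would derive a uniform-in-$v$ law of large numbers with rate, $(vT-\tau_T)^{-1}\int_{\tau_T}^{vT}\sigma^{-2}\dot M(\vartheta,s)^2{\rm d}s={\rm I}(\vartheta)+O_{\Pb}(T^{-1/2})$ near $\vartheta_0$, together with $L^p$ bounds on all pieces via Rosenthal/Burkholder inequalities. Combined with the rate $|\vartheta^*_{\tau_T}-\vartheta_0|=O_{\Pb}(T^{-\delta/2})$ from \eqref{3-21}, the smoothness of ${\rm I}(\cdot)$, and the constraint $\delta>1/2$, every non-martingale contribution to $\sqrt T(\vartheta^\star_T(v)-\vartheta_0)$ is shown to be $o_{\Pb}(1)$ uniformly in $v\in[\varepsilon_0,1]$, leaving
\begin{align*}
\sqrt T\bigl(\vartheta^\star_T(v)-\vartheta_0\bigr)=\frac{\sqrt T}{{\rm I}(\vartheta_0)\,(vT-\tau_T)}\int_{\tau_T}^{vT}\frac{\dot M(\vartheta_0,s)}{\sigma}\,{\rm d}\bar W_s+o_{\Pb}(1).
\end{align*}

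Uniform consistency then follows by bounding the three groups of terms uniformly in $v\ge\varepsilon_0$: Doob's maximal inequality controls $\sup_v$ of the martingale part (of order $T^{-1/2}$ after division by $vT-\tau_T\ge\varepsilon_0 T-\tau_T$), while the remaining terms are $O_{\Pb}(T^{-\delta/2})$. For fixed $v$, asymptotic normality comes from the central limit theorem for the continuous martingale $\int_{\tau_T}^{vT}\sigma^{-1}\dot M(\vartheta_0,s){\rm d}\bar W_s$, whose quadratic variation $\int_{\tau_T}^{vT}\sigma^{-2}\dot M(\vartheta_0,s)^2{\rm d}s$ equals $(vT-\tau_T){\rm I}(\vartheta_0)(1+o_{\Pb}(1))$; after normalization this gives the limit ${\cal N}(0,v^{-1}{\rm I}(\vartheta_0)^{-1})$, and the uniform $L^{p+\varepsilon}$ bounds supply the uniform integrability needed to upgrade to convergence of the polynomial moments. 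For the functional statement, set $m_T(v)=(T\,{\rm I}(\vartheta_0))^{-1/2}\int_{\tau_T}^{vT}\sigma^{-1}\dot M(\vartheta_0,s){\rm d}\bar W_s$, a continuous $\scr F_{vT}$-martingale in $v$ with $\langle m_T\rangle_v=(T\,{\rm I}(\vartheta_0))^{-1}\int_{\tau_T}^{vT}\sigma^{-2}\dot M(\vartheta_0,s)^2{\rm d}s\to v$ uniformly on $[\varepsilon_0,1]$; since $\eta_T(v)=\tfrac{v}{v-\varepsilon_T}\,m_T(v)+o_{\Pb}(1)$ with $v/(v-\varepsilon_T)\to1$ uniformly, Rebolledo's invariance principle for continuous martingales (the quadratic-variation clock tending to the identity, continuity furnishing the Lindeberg condition) yields $m_T\Longrightarrow W$ in $({\cal C}[\varepsilon_0,1],{\goth B})$, whence $\eta_T\Longrightarrow W$.

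I expect the main obstacle to be the second step: establishing the uniform-in-$v$ ergodic limits and the $L^p$ bounds with the correct rates for the filter outputs evaluated at the data-dependent parameter $\vartheta^*_{\tau_T}$. This requires controlling simultaneously the exponentially decaying transient from the time-$\tau_T$ initialization \eqref{53}--\eqref{54}, the smooth dependence of the whole family $\{M(\vartheta,\cdot),\dot M(\vartheta,\cdot),\ddot M(\vartheta,\cdot)\}$ on $\vartheta$ so that both the Taylor expansion and the substitution of $\vartheta_0$ for $\vartheta^*_{\tau_T}$ in the limits are legitimate, and moment bounds sharp enough to pass from convergence in probability to convergence of moments and to the process level. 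Once these are in hand, the probabilistic conclusions follow from the standard maximal inequalities and the continuous-martingale central limit theorem.
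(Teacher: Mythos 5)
Your proposal is correct and, for the core of the theorem, follows essentially the paper's own route: the same Le Cam decomposition of the One-step estimator into preliminary error, score martingale, and empirical-information term; the same cancellation mechanism in which the factor multiplying $\sqrt{T}(\vartheta^*_{\tau_T}-\vartheta_0)$ is $O_{\Pb}(|\vartheta^*_{\tau_T}-\vartheta_0|)+O_{\Pb}(T^{-1/2})$, so that $\delta>1/2$ kills it; the same replacement of $\dot M(\vartheta^*_{\tau_T},s)$ by $\dot M(\vartheta_0,s)$ in the stochastic integral via an It\^o-isometry bound of order $\Ex_{\vartheta_0}|\vartheta^*_{\tau_T}-\vartheta_0|^2$; and the same three-term bound for uniform consistency. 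The one genuinely different component is the functional statement: you invoke Rebolledo's invariance principle for the continuous martingale $m_T(v)$, using convergence of its quadratic-variation clock to the identity, whereas the paper proves convergence of finite-dimensional distributions by the Cram\'er--Wold device applied to $\langle\lambda,\eta_T\rangle$ together with the CLT, and obtains tightness from the fourth-moment increment bound $\Ex_{\vartheta_0}|\eta_T(v_2)-\eta_T(v_1)|^4\leq C|v_2-v_1|^2$ (Kolmogorov's criterion). Your route packages fidi convergence and tightness into a single martingale theorem, which is more economical; its price is that you must show the residual $\eta_T(v)-\tfrac{v}{v-\varepsilon_T}m_T(v)$ is $o_{\Pb}(1)$ \emph{uniformly} in $v\in[\varepsilon_0,1]$ (your appeal to Doob's maximal inequality is the right tool for this), while the paper's tightness bound acts on $\eta_T$ directly and only needs pointwise-in-$v$ negligibility of the residual $h_T(v)$ in the representation \eqref{58a}. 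Both arguments close the proof; neither has a gap that the other avoids.
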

\begin{proof}  
Let us denote ${\rm I}_m=\inf_{\vartheta \in\Theta }{\rm I}\left(\vartheta
\right)>0$, suppose that $\varepsilon _T<\varepsilon _0$ and consider the
probability
\begin{align*}
& \Pb_{\vartheta _0}\left(\sup_{\varepsilon _0\leq v\leq 1}\left|\vartheta_\zT ^\star\left(v
  \right)-\vartheta_0 \right|>\nu \right)\leq \Pb_{\vartheta
    _0}\left(|\vartheta^*_{\tau _\zT} -\vartheta_0 |>\frac{\nu
  }{3}\right)\\ 
&\quad +\Pb_{\vartheta _0}\left(\frac{1}{{\rm I}_m
    T\left(\varepsilon _0-\varepsilon _\zT\right)} \sup_{\varepsilon _0\leq v\leq 1}\left| \int_{\varepsilon
    _\zT}^{v}\frac{\dot M(\vartheta^*_{\tau _\zT},uT ) }{\sigma
  } {\rm d}\bar W_{uT}\right|\geq \frac{\nu }{3} \right)\\ 
&\quad  +\Pb_{\vartheta _0}\left(\frac{1}{{\rm I}_m \left(\varepsilon _0-\varepsilon
    _\zT\right)} \int_{\varepsilon _\zT}^{1}\frac{\left| \dot
    M(\vartheta^*_{\tau _\zT},uT )\left[M(\vartheta_0,uT
      )-M(\vartheta^*_{\tau _\zT},uT ) \right] \right|}{\sigma^2
  } {\rm d}u\geq \frac{\nu }{3} \right).
\end{align*}
 For the first probability due to \eqref{3-21} we have
\begin{align*}
\Pb_{\vartheta
    _0}\left(\left|\vartheta^*_{\tau _\zT} -\vartheta_0 \right|>\frac{\nu
  }{3}\right)\leq \frac{9}{\nu ^2}\Ex_{\vartheta
  _0}|\vartheta^*_{\tau _\zT} -\vartheta_0 |^2\leq  \frac{C}{\nu^2 \tau _\zT}\rightarrow 0.
\end{align*}
The second probability is estimated as follows 
\begin{align*}
&\Pb_{\vartheta _0}\left( \sup_{\varepsilon _0\leq
  v\leq 1}\left| \int_{\varepsilon 
    _\zT}^{v}\frac{\dot M(\vartheta^*_{\tau _\zT},uT ) }{\sigma
  } {\rm d}\bar W_{uT}\right|\geq \frac{\nu {\rm I}_m
    T\left(\varepsilon _0-\varepsilon _\zT\right) }{3} \right)\\
&\qquad \qquad \qquad \qquad \qquad  \leq \frac{C}{\nu^2 {\rm I}_m^2 
    T\left(\varepsilon _0-\varepsilon _\zT\right)^2 } \int_{\varepsilon 
    _\zT}^{1}\frac{\Ex_{\vartheta
    _0}\dot M(\vartheta^*_{\tau _\zT},uT )^2 }{\sigma^2
  } {\rm d}u  \\
&\qquad \qquad \qquad \qquad \qquad   \leq \frac{C}{\nu^2 {\rm I}_m^2 
    T\left(\varepsilon _0-\varepsilon _\zT\right)^2 } \longrightarrow 0.
\end{align*}
Further
\begin{align*}
&\Ex_{\vartheta _0} \left|\dot
    M(\vartheta^*_{\tau _\zT},uT )\left[M\left(\vartheta_0,uT
      \right)-M(\vartheta^*_{\tau _\zT},uT ) \right]\right|\\
&\qquad \qquad =\Ex_{\vartheta _0} \left|\dot
    M(\vartheta^*_{\tau _\zT},uT )\dot
    M\left(\bar\vartheta_{\tau _\zT},uT \right)\left[\vartheta^*_{\tau
        _\zT}-\vartheta _0\right]\right|\\
&\qquad \qquad \leq  \left(\Ex_{\vartheta _0} \left|\dot
    M(\vartheta^*_{\tau _\zT},uT )\dot
    M\left(\bar\vartheta_{\tau _\zT},uT \right)\right|^2 \Ex_{\vartheta _0} [\vartheta^*_{\tau
        _\zT}-\vartheta _0]^2\right)^{1/2}\\
&\qquad \qquad \leq C \left( \Ex_{\vartheta _0} [\vartheta^*_{\tau
        _\zT}-\vartheta _0]^2\right)^{1/2}\leq  CT^{-\delta/2 }\rightarrow 0.
\end{align*}
Therefore the last probability tends to zero as well. All estimates are
uniform on compacts. 

The estimator  $\vartheta_\zT ^\star\left(v
  \right) $ has the representation
\begin{align*}
&\sqrt{T}\left(\vartheta_\zT ^\star\left(v \right)-\vartheta
_0\right)=\sqrt{T}\left(\vartheta_{\tau _\zT} ^\star\left(v
\right)-\vartheta _0\right) +\frac{\sqrt{T}}{{\rm
    I}\bigl(\vartheta^* _{\tau_\zT}
  \bigr)\left({t-{\tau_\zT}}\right)}\int_{{\tau_\zT}}^{t}\frac{\dot
  M(\vartheta^* _{\tau_\zT} ,s)}{\sigma }{\rm
    d}\bar W_s\\
&\qquad \qquad +\frac{\sqrt{T}}{{\rm
    I}\bigl(\vartheta^* _{\tau_\zT}
  \bigr)\left({t-{\tau_\zT}}\right)}\int_{{\tau_\zT}}^{t}\frac{\dot
  M(\vartheta^* _{\tau_\zT} ,s)}{\sigma ^2}\left[M( \vartheta _{0}
  ,s)-M(\vartheta^* _{\tau_\zT} ,s)\right]{\rm d}s\\
 &\qquad  =\frac{1}{{\rm
    I}\bigl(\vartheta^* _{\tau_\zT}
  \bigr)\left({v-\varepsilon _\zT}\right)}  \frac{1}{\sqrt{T}}  \int_{{\tau_\zT}}^{t}\frac{\dot
  M(\vartheta^* _{\tau_\zT} ,s)}{\sigma }{\rm
    d}\bar W_s\\
&\qquad \qquad  +\frac{\sqrt{T}\left(\vartheta^* _{\tau_\zT} -\vartheta _0
  \right)}{{\rm
    I}\bigl(\vartheta^* _{\tau_\zT}
  \bigr)   }\left({\rm
    I}\bigl(\vartheta^* _{\tau_\zT}
  \bigr)-\frac{1}{\left({t-{\tau_\zT}}\right)}\int_{{\tau_\zT}}^{t}\frac{\dot
  M(\vartheta^* _{\tau_\zT} ,s)\dot
  M(\bar\vartheta _{\tau_\zT} ,s)}{\sigma ^2}{\rm d}s\right).
\end{align*}
The stochastic integral has the following asymptotics
\begin{align*}
&\Ex_{\vartheta _0}\left( \frac{1}{\sqrt{T}}  \int_{{\tau_\zT}}^{t}\frac{\dot
  M(\vartheta^* _{\tau_\zT} ,s)}{\sigma }{\rm
    d}\bar W_s-\frac{1}{\sqrt{T}}  \int_{{\tau_\zT}}^{t}\frac{\dot
  M(\vartheta _0 ,s)}{\sigma }{\rm
    d}\bar W_s  \right)^2\\
&\qquad =\frac{1}{{T}}  \int_{{\tau_\zT}}^{t}\frac{\Ex_{\vartheta _0}\left[\dot
  M(\vartheta^* _{\tau_\zT} ,s)  -\dot  M(\vartheta _0 ,s)  \right] ^2}{\sigma ^2 }{\rm
    d}s \leq C\frac{t}{T}\Ex_{\vartheta _0}\left[\vartheta^* _{\tau_\zT}
    -\vartheta _0  \right]^2\leq  C\frac{t}{T}T^{-\delta }\rightarrow 0
\end{align*}
and for a fixed $ v$
\begin{align*}
\frac{1}{\sqrt{T}} \int_{{\tau_\zT}}^{vT}\frac{\dot M(\vartheta _0 ,s)}{\sigma
}{\rm d}\bar W_s\Longrightarrow {\cal N}\left(0,v {\rm I}\left(\vartheta
_0\right)\right)
\end{align*}
because by the law of large numbers 
\begin{align*}
\frac{1}{{T}} \int_{{\tau_\zT}}^{vT}\frac{\dot M(\vartheta _0 ,s)^2}{\sigma^2
}{\rm d}s\longrightarrow  v{\rm I}\left(\vartheta
_0\right).
\end{align*}
Therefore
\begin{align}
\label{56}
\frac{1}{{\rm I}\left(\vartheta^* _{\tau_\zT} \right)\left(v-\varepsilon  _\zT\right)\sqrt{T} }
\int_{{\tau_\zT}}^{t}\frac{\dot M(\vartheta^* _{\tau_\zT} ,s)}{\sigma
}{\rm d}\bar W_s\Longrightarrow  {\cal N}\left(0,v^{-1} {\rm I}\left(\vartheta
_0\right)^{-1}\right).
\end{align}
For the last term we write
\begin{align*}
&\int_{{\tau_\zT}}^{t}\frac{\dot M(\vartheta^* _{\tau_\zT} ,s)\dot
  M(\bar\vartheta _{\tau_\zT} ,s)}{\sigma ^2}{\rm d}s\\
&\qquad \quad =\int_{{\tau_\zT}}^{t}\frac{\dot
  M(\vartheta^* _{\tau_\zT} ,s)^2}{\sigma ^2}{\rm d}s+ \left(\bar\vartheta
  _{\tau_\zT}-\vartheta^* _{\tau_\zT}       \right)\int_{{\tau_\zT}}^{t}\frac{\dot 
  M(\vartheta^* _{\tau_\zT} ,s)\ddot
  M(\bar{\bar\vartheta} _{\tau_\zT} ,s)   }{\sigma ^2}{\rm d}s,
\end{align*}
where  $ \left|\bar\vartheta _{\tau_\zT} -\vartheta^* _{\tau_\zT}   \right|\leq
|\vartheta^* _{\tau_\zT}-\vartheta _0 |  $. 
 Hence
\begin{align*}
&\frac{\sqrt{T}\left(\vartheta^* _{\tau_\zT} -\vartheta _0
  \right)}{{\rm
    I}\bigl(\vartheta^* _{\tau_\zT}
  \bigr)   }\left({\rm
    I}\bigl(\vartheta^* _{\tau_\zT}
  \bigr)-\frac{1}{\left({t-{\tau_\zT}}\right)}\int_{{\tau_\zT}}^{t}\frac{\dot
  M(\vartheta^* _{\tau_\zT} ,s)\dot
  M(\bar\vartheta _{\tau_\zT} ,s)}{\sigma ^2}{\rm d}s\right)\\
&\qquad \qquad =\frac{\sqrt{T}\left(\vartheta^* _{\tau_\zT} -\vartheta _0
  \right)}{{\rm
    I}\bigl(\vartheta^* _{\tau_\zT}
  \bigr)   }\left({\rm
    I}\bigl(\vartheta^* _{\tau_\zT}
  \bigr)-\frac{1}{\left({t-{\tau_\zT}}\right)}\int_{{\tau_\zT}}^{t}\frac{\dot
  M(\vartheta^* _{\tau_\zT} ,s)^2}{\sigma ^2}{\rm d}s\right)\\
&\qquad \qquad\qquad +\frac{\sqrt{T}\left(\vartheta^* _{\tau_\zT} -\vartheta _0
  \right)\left(\bar\vartheta _{\tau_\zT} -\vartheta _0 \right)}{{\rm
    I}\bigl(\vartheta^* _{\tau_\zT}
  \bigr) \left(t-\tau _\zT\right)  }\int_{{\tau_\zT}}^{t}\frac{\dot 
  M(\vartheta^* _{\tau_\zT} ,s)\ddot
  M(\bar{\bar\vartheta} _{\tau_\zT} ,s)   }{\sigma ^2}{\rm d}s.
\end{align*}
 Note that
\begin{align*}
\sqrt{T}\left|\left(\vartheta^* _{\tau_\zT} -\vartheta _0
  \right)\left(\bar\vartheta _{\tau_\zT} -\vartheta _0 \right)\right|\leq
  \sqrt{T}\left|\vartheta^* _{\tau_\zT} -\vartheta _0
  \right|^2=O\left(T^{1/2 - \delta }\right)\longrightarrow 0.
\end{align*}
The similar estimates allows us to write
\begin{align*}
&{\rm I}\bigl(\vartheta^* _{\tau_\zT}
  \bigr)-\frac{1}{\left({t-{\tau_\zT}}\right)}\int_{{\tau_\zT}}^{t}\frac{\dot
    M(\vartheta^* _{\tau_\zT} ,s)^2}{\sigma ^2}{\rm d}s
  ={\rm I}\bigl(\vartheta _{0}
  \bigr)-\frac{1}{\left({t-{\tau_\zT}}\right)}\int_{{\tau_\zT}}^{t}\frac{\dot
    M(\vartheta_0,s)^2}{ \sigma ^2}{\rm d}s+O\left(\vartheta^* _{\tau_\zT}-\vartheta
  _0 \right).
\end{align*}
Further 
\begin{align*}
\Ex_{\vartheta _0}\left( \frac{1}{\left({t-{\tau_\zT}}\right)}\int_{{\tau_\zT}}^{t}\left[\frac{\dot
  M(\vartheta _{0} ,s)^2}{\sigma ^2}  -  {\rm
    I}\bigl(\vartheta _{0}
  \bigr)  \right] {\rm d}s \right)^2\leq \frac{C}{t}
\end{align*}
and we obtain 
\begin{align*}
\frac{\sqrt{T}\left(\vartheta^* _{\tau_\zT} -\vartheta _0
  \right)}{{\rm
    I}\bigl(\vartheta^* _{\tau_\zT}
  \bigr)   }\left({\rm
    I}\bigl(\vartheta^* _{\tau_\zT}
  \bigr)-\frac{1}{\left({t-{\tau_\zT}}\right)}\int_{{\tau_\zT}}^{t}\frac{\dot
  M(\vartheta^* _{\tau_\zT} ,s)\dot
  M(\bar\vartheta _{\tau_\zT} ,s)}{\sigma ^2}{\rm d}s\right)\longrightarrow 0.
\end{align*}
This  convergence and \eqref{56} yield the asymptotic normality.

The convergence of moments  follows from  \eqref{3-21} and boundedness of all
moments of the derivatives of $M\left(\vartheta ,s\right)$.

It is proved that the process $\eta _\zT\left(v\right), \varepsilon _0,\leq
v\leq 1$ admits the representation
\begin{align}
\label{58a}
\eta _\zT\left(v\right)&=\frac{1}{\sigma \sqrt{{\rm I}\left(\vartheta
    _0\right)T   }}\int_{0}^{vT}\dot M\left(\vartheta _0,s\right){\rm d}\bar
W_s+h_\zT\left(v\right) ,
\end{align}
where the residue $h_\zT(v)=h_\zT(v,\vartheta
_0,\vartheta^* _{\tau_\zT} )\rightarrow 0 $ uniformly on $\vartheta
_0\in\KK$
\begin{align*}
\sup_{\vartheta _0\in\KK}\Ex_{\vartheta _0}h_\zT(v,\vartheta
_0,\vartheta^* _{\tau_\zT} )^2\rightarrow 0
\end{align*}
 and such that
\begin{align}
\label{58}
\Ex_{\vartheta _0}\left|\eta _\zT\left(v_2\right)-\eta _\zT\left(v_1\right)
\right|^4\leq C\left|v_2-v_1\right|^2 + C\left|v_2-v_1\right|^4 \leq
C\left|v_2-v_1\right|^2.
\end{align}
To verify the convergence of two-dimensional distributions $ \eta
_\zT\left(v_1\right),\eta _\zT\left(v_2\right) $ we introduce $\langle \lambda
,\eta _\zT\rangle= \lambda _1\eta _\zT\left(v_1\right)+\lambda _2\eta
_\zT\left(v_2\right) $ and note that 
\begin{align*}
\langle \lambda ,\eta _\zT\rangle =\frac{1}{\sigma \sqrt{{\rm
      I}\left(\vartheta _0\right)T }}\int_{0}^{T}\left[\lambda
  _1\1_{\left\{s<v_1T\right\}}+\lambda _2\1_{\left\{s<v_2T\right\}}\right]\dot
M\left(\vartheta _0,s\right){\rm d}\bar W_s+  \langle \lambda
, h_\zT\rangle
\end{align*}
where $\langle \lambda , h_\zT\rangle\rightarrow 0 $ and
\begin{align*}
&\frac{1}{\sigma ^2{{\rm I}\left(\vartheta _0\right)T
  }}\int_{0}^{T}\left[\lambda _1\1_{\left\{s<v_1T\right\}}+\lambda
    _2\1_{\left\{s<v_2T\right\}}\right]^2\dot M\left(\vartheta
  _0,s\right)^2{\rm d}s\\ &\qquad =\frac{\lambda _1^2}{\sigma ^2{{\rm
        I}\left(\vartheta _0\right)T }}\int_{0}^{v_1T}\dot M\left(\vartheta
  _0,s\right)^2{\rm d}s+\frac{\lambda _2^2}{\sigma ^2{{\rm I}\left(\vartheta
      _0\right)T }}\int_{0}^{v_2T}\dot M\left(\vartheta _0,s\right)^2{\rm
    d}s\\ &\qquad \qquad +\frac{2\lambda _1\lambda _2}{\sigma ^2{{\rm I}\left(\vartheta
      _0\right)T }}\int_{0}^{\left(v_1\wedge v_2\right)T}\dot M\left(\vartheta
  _0,s\right)^2{\rm d}s\\
&\qquad \longrightarrow \lambda _1^2v_1+2\lambda _1\lambda _2\left(v_1\wedge v_2\right)+\lambda _2^2v_2
\end{align*}
because
\begin{align*}
\frac{v}{\sigma ^2{{\rm I}\left(\vartheta
      _0\right)Tv }}\int_{0}^{vT}\dot M\left(\vartheta
  _0,s\right)^2{\rm d}s\longrightarrow v.
\end{align*}
Therefore by the CLT
\begin{align*}
\langle \lambda ,\eta _\zT\rangle \Longrightarrow \langle \lambda ,W\rangle
=\lambda _1W\left(v_1\right)+\lambda _2W\left(v_2\right) .
\end{align*}

The convergence  of finite-dimensional distributions 
 $$
\left(\eta
_\zT\left(v_1\right),\ldots, \eta _\zT\left(v_k\right)
\right)\Longrightarrow\left(W\left(v_1\right),\ldots, W\left(v_k\right)
\right)
 $$
 can be verified by a similar way. This convergence and \eqref{58} provide
the weak convergence $\eta _\zT\left(\cdot \right)\Longrightarrow W\left(\cdot
\right) $.

\end{proof}

\begin{remark}
\label{R1}
{\rm The representation \eqref{52} of the One-step MLE-process can be given
  in the   recurrent form as follows. 
For the values  $t\in (\tau_\zT, T] $  we have the equality
\begin{align*}
 \left({t-{\tau_\zT}}\right)\vartheta _{t,T}^\star=\left({t-{\tau_\zT}}\right)\vartheta^* _{\tau_\zT} +\frac{1}{{\rm
    I}\bigl(\vartheta^* _{\tau_\zT}
  \bigr)}\int_{{\tau_\zT}}^{t}\frac{\dot
  M(\vartheta^* _{\tau_\zT} ,s)}{\sigma ^2}\left[{\rm
    d}X_s-M(\vartheta^* _{\tau_\zT} ,s){\rm d}s\right].          
\end{align*}
Hence
\begin{align}
\label{59}
{\rm d}\vartheta _{t,T}^\star=\frac{\vartheta^* _{\tau_\zT}-\vartheta _{t,T}^\star }{{t-{\tau_\zT}}}{\rm d}t+\frac{{\dot
  M(\vartheta^* _{\tau_\zT} ,t)}{}}{{\rm
    I}\bigl(\vartheta^* _{\tau_\zT}
  \bigr)\sigma ^2\left({t-{\tau_\zT}}\right)}\left[{\rm
    d}X_t-M(\vartheta^* _{\tau_\zT} ,t){\rm d}t\right].
\end{align}
To avoid the singularities like $0/0$ at the beginning $t=\tau_\zT$ in
numerical simulations the value $t-{\tau_\zT}$ in this formula can be replaced
by $t-{\tau_\zT}+\varepsilon ^*$, where $\varepsilon ^*>0 $ is some small
value. This modification, of course, will not change the asymptotic results of
$\vartheta _{t,T}^\star $.

}
\end{remark}

%\subsubsection
{\bf Estimation of the parameter $f$.}

If $f\left(\vartheta \right)=\vartheta \in\Theta =\left(\alpha _f,\beta
_f\right), \alpha _f>0,$ $a\left(\vartheta \right)=a>0$ and
$b\left(\vartheta \right)=b>0$, then the preliminary estimator is
\begin{align*}
\vartheta^* _{\tau_\zT}=\left(\frac{a^3}{b^2\left[e^{-a}-1+a\right]{\tau_\zT}}
\sum_{k=1}^{{\tau_\zT}}\left(\left[X_{k}-X_{k-1}\right]^2-\sigma^2\right)
\right)^{1/2}.
\end{align*}
The processes $M\left(\vartheta ,t\right),\dot M\left(\vartheta
,t\right),t\geq 0$ are solutions of the equations
\begin{align*}
{\rm d}M\left(\vartheta ,t\right)&=-r\left(\vartheta \right)M\left(\vartheta
,t\right){\rm d}t+\Gamma \left(\vartheta \right){\rm d}X_t,\qquad\qquad t\geq 0,\\
{\rm d}\dot M\left(\vartheta ,t\right)&=-r\left(\vartheta \right)\dot M\left(\vartheta
,t\right){\rm d}t+\frac{b^2\vartheta }{r \left(\vartheta \right)\sigma
  ^2}\;{\rm d}X_t     ,
\end{align*}
where $r\left(\vartheta \right)=\left(a^2+b^2\vartheta ^2\sigma
^{-2}\right)^{1/2}$ and $\Gamma \left(\vartheta \right)=r\left(\vartheta
\right)-a $. 

The One-step MLE-process is
\begin{align*}
\vartheta _{t,T}^\star=\vartheta^* _{\tau_\zT}+\frac{2\sigma ^3r(\vartheta^* _{\tau_\zT}
  )^3}{b^4\vartheta^{*2} _{\tau_\zT}\left(t-\tau _\zT \right)} \int_{\tau _\zT}^{t}\dot
M(\vartheta^* _{\tau_\zT},s)\left[{\rm d}X_s-M(\vartheta^*
  _{\tau_\zT},s){\rm d}s\right],\quad  \tau _\zT<t\leq T .
\end{align*}

According to Theorem \ref{T2} this estimator has the properties mentioned
in 
this theorem and in particular we have the asymptotic normality
\begin{align*}
\sqrt{T}\left(\vartheta _{\tau_\zT}^\star\left(v\right)-\vartheta _0\right)\Longrightarrow {\cal
N}\left(0,\frac{2\sigma ^4r\left(\vartheta_0
  \right)^3}{v\, b^4\vartheta _0^2} \right).
\end{align*}

\subsubsection{Estimation of the parameter $b$.}

If $b\left(\vartheta \right)=\vartheta \in\Theta =\left(\alpha _b,\beta
_b\right), \alpha _b>0,$ $f\left(\vartheta \right)=f>0$ and
$a\left(\vartheta \right)=a>0$, then the preliminary estimator and One-step
MLE-process are 
\begin{align*}
\vartheta^* _{\tau _\zT}&=\left(\frac{a^3}{f^2\left[e^{-a}-1+a\right]\tau _\zT}
\sum_{k=1}^{\tau _\zT}\left(\left[X_{k}-X_{k-1}\right]^2-\sigma^2\right)
\right)^{1/2},\\
\vartheta _{t,T}^\star&=\vartheta^* _{\tau _\zT}+\frac{2\sigma ^3r(\vartheta^* _{\tau _\zT}
  )^3}{f^4\,\vartheta^{*2} _{\tau _\zT}\left(t-\tau _\zT \right)} \int_{\tau _\zT}^{t}\dot
M(\vartheta^* _{\tau _\zT},s)\left[{\rm d}X_s-M(\vartheta^*
  _{\tau _\zT},s){\rm d}s\right],\quad  \tau _\zT<t\leq T 
\end{align*}
with the  corresponding  processes $M\left(\vartheta ,t\right),t\geq 0$ and $\dot M\left(\vartheta
,t\right),t\geq 0$. 

We have as well 
\begin{align*}
\sqrt{T}\left(\vartheta _\zT^\star\left(v\right)-\vartheta _0\right)\Longrightarrow {\cal
N}\left(0,\frac{2\sigma ^4r\left(\vartheta_0
  \right)^3}{v\, f^4\vartheta _0^2} \right).
\end{align*}

\subsubsection{Estimation of the parameter $a$.}

If $a\left(\vartheta \right)=\vartheta \in\Theta =\left(\alpha _a,\beta
_a\right), \alpha _a>0,$ $f\left(\vartheta \right)=f>0$ and
$a\left(\vartheta \right)=a>0$, then the preliminary estimator is
\begin{align*}
\vartheta^* _{\tau _\zT}={\rm arg}\inf_{\vartheta \in\Theta
}\left|\frac{1}{\tau _\zT}\sum_{k=1}^{\tau _\zT}\left[X_{k}-X_{k-1}\right]^2-\Psi\left(\vartheta
\right) \right|,\qquad \Psi\left(\vartheta \right)=\frac{f^2b^2}{\vartheta
  ^3}\left[e^{-\vartheta }-1+\vartheta \right]+\sigma ^2.
\end{align*}
 The Fisher information ${\rm I}\left(\vartheta \right)$  is defined in
\eqref{2-43}
\begin{align*}
{\rm I}\left(\vartheta_0 \right)=\frac{ \Gamma \left(\vartheta
  _0\right)\left[r\left(\vartheta _0\right)+\vartheta _0+r\left(\vartheta
    _0\right)\vartheta _0\Gamma \left(\vartheta
  _0\right) \right]}{2\vartheta _0r\left(\vartheta
  _0\right)^3\left[r\left(\vartheta _0\right)+\vartheta _0\right]},
\end{align*}
and 
 the One-step MLE-process is given by \eqref{52}. It  is
asymptotically normal
\begin{align*}
\sqrt{T}\left(\vartheta _\zT^\star\left(v\right)-\vartheta
_0\right)\Longrightarrow {\cal N}\left(0,v^{-1}{\rm I}\left(\vartheta_0
\right)^{-1} \right).
\end{align*}

\subsubsection{Estimation of the two-dimensional parameter $\left(a,f\right)$.}

The partially observed system is 
\begin{align*}
{\rm d}X_t&=\theta _2Y_t{\rm d}t+\sigma {\rm d}W_t,\qquad
X_0,\qquad  0\leq t\leq T,\\
{\rm d}Y_t&=-\theta _1Y_t{\rm d}t+b{\rm
  d}V_t,\qquad  Y_0,\qquad t\geq 0,
\end{align*}
where $b\not=0$ is known and we have to estimate $\vartheta
=\left(a,f\right)=\left(\theta _1,\theta _2\right)\in\Theta =\left(\alpha
_a,\beta _a\right)\times \left(\alpha _f,\beta _f\right) $, $\alpha _a>0,\alpha _f>0$. 

Recall that the preliminary MME $\vartheta^*_{\tau _\zT}=(\theta^*_{1,\tau
  _\zT},\theta^*_{2,\tau _\zT})=(a^*_{\tau _\zT},f^*_{\tau _\zT}) $
can be calculated as follows
\begin{align*}
&\frac{e^{-a_{\tau _\zT}^*}-1+a_{\tau _\zT}^*}{(1-e^{-a_{\tau
        _\zT}^*})^2}=\frac{R_{1,\tau _\zT}-\sigma ^2}{2R_{2,\tau
      _\zT}},\qquad \quad R_{1,\tau _\zT}=\frac{1}{\tau _\zT}\sum_{k=1}^{\tau
    _\zT}\left[X_{k}-X_{k-1}\right]^2,\\ & f_{\tau _\zT}^*=\frac{2(a_{\tau 
      _\zT}^*)^{3/2}\,R_{2,\tau _\zT}^{1/2}}{b^2(1-e^{-a_{\tau
        _\zT}^*})^2},\qquad\qquad  R_{2,\tau _\zT}=\frac{1}{\tau _\zT}\sum_{k=2  
}^{\tau
    _\zT}\left[X_{k}-X_{k-1}\right]\left[X_{k-1}-X_{k-2}\right].
\end{align*}

Fisher information matrix is
\begin{align}
 {\bf I}\left(\vartheta_0 \right)=\left(\begin{array}{cc}
   \frac{\Gamma\left(\vartheta _0\right)^2 \left[r\left(\vartheta_0
       \right)\theta _{1,0} +\left(r\left(\vartheta_0
       \right)+\theta _{1,0}\right)^2
       \right]}{2\theta _{1,0} r\left(\vartheta_0
     \right)^3\left[r\left(\vartheta_0 \right)+\theta _{1,0} \right] },
   &\frac{b^2\theta _{2,0}\left(\theta _{1,0}^2+r\left(\vartheta _0\right)\theta
     _{1,0}-2r\left(\vartheta _0\right)\right) }{ 2r\left(\vartheta
     _0\right)^3\left(r\left(\vartheta _0\right)+\theta _{1,0} \right)\sigma ^2 }
   \\\frac{b^2\theta _{2,0} \left(\theta _{1,0}^2+r\left(\vartheta _0\right)\theta
     _{1,0}-2r\left(\vartheta _0\right) \right)}{ 2r\left(\vartheta
     _0\right)^3\left(r\left(\vartheta _0\right)+\theta _{1,0} \right)\sigma ^2 },
   &\frac{\theta _{2,0} ^2b^4}{2r\left(\vartheta_0 \right)^3\sigma ^4}
 \end{array}
\right).
\label{51}
\end{align} 
 and it is non-degenerate. For its calculation we used the stationary
 representations for the derivatives $\dot M_f\left(\vartheta _0,t\right)$ and
 $\dot M_a\left(\vartheta _0,t\right)$.  One-step MLE-process is the vector
 process $\vartheta _{t,T}^\star,\tau _\zT<t\leq T$ defined by the expression
\begin{align*}
\vartheta _{t,T}^\star=\vartheta^*_{\tau _\zT}+\frac{{\bf I}(\vartheta^*_{\tau
    _\zT})^{-1}}{\sigma \left(t-\tau _\zT \right)}\int_{\tau _\zT}^{t}\dot
    M(\vartheta^*_{\tau _\zT},s) \left[{\rm d}X_s-  M(\vartheta^*_{\tau
        _\zT},s){\rm d}s\right] .
\end{align*}
Here $M(\vartheta^*_{\tau
        _\zT},s),\tau _\zT\leq t\leq T $ and vector $\dot M(\vartheta^*_{\tau
        _\zT},s)=(\dot   M_a(\vartheta^*_{\tau _\zT},t) ,\dot   M_f(\vartheta^*_{\tau _\zT},t))^\top ,\tau _\zT\leq t\leq T $    are solutions of the equations
\begin{align*}
{\rm d}M(\vartheta^*_{\tau _\zT},t)&=-r(\vartheta^*_{\tau
  _\zT})M(\vartheta^*_{\tau _\zT},t){\rm d}t +\Gamma (\vartheta^*_{\tau
  _\zT}){\rm d}X_t, \\
{\rm d}\dot   M_a(\vartheta^*_{\tau _\zT},t)&=-r(\vartheta^*_{\tau
  _\zT})\dot M_a(\vartheta^*_{\tau _\zT},t){\rm d}t -\dot r_a(\vartheta^*_{\tau
  _\zT})M(\vartheta^*_{\tau _\zT},t){\rm d}t +\dot \Gamma_a (\vartheta^*_{\tau
  _\zT}){\rm d}X_t,\\
{\rm d}\dot   M_f(\vartheta^*_{\tau _\zT},t)&=-r(\vartheta^*_{\tau
  _\zT})\dot M_f(\vartheta^*_{\tau _\zT},t){\rm d}t -\dot r_f(\vartheta^*_{\tau
  _\zT})M(\vartheta^*_{\tau _\zT},t){\rm d}t +\dot \Gamma_f (\vartheta^*_{\tau
  _\zT}){\rm d}X_t,
\end{align*}
The initial values $ M(\vartheta^*_{\tau _\zT},\tau _\zT), \dot
M_a(\vartheta^*_{\tau _\zT},\tau _\zT)$  and $ \dot  M_f(\vartheta^*_{\tau _\zT},\tau _\zT) $     are calculated with the help of the representations
\eqref{53},\eqref{54} given  above. 

Of course this expression for One-step MLE-process we can replace by the
recurrent version like \eqref{59}
\begin{align}
\label{}
{\rm d}\vartheta _{t,T}^\star=\frac{\vartheta^* _{\tau_\zT}-\vartheta _{t,T}^\star }{{t-{\tau_\zT}}}{\rm d}t+\frac{{\bf
    I}\bigl(\vartheta^* _{\tau_\zT}
  \bigr)^{-1}{\dot
  M(\vartheta^* _{\tau_\zT} ,t)}{}}{\sigma ^2\left({t-{\tau_\zT}}\right)}\left[{\rm
    d}X_t-M(\vartheta^* _{\tau_\zT} ,t){\rm d}t\right].
\end{align}

\begin{proposition}
\label{P6} One-step MLE-process $\vartheta
_{t,T}^\star,\tau _\zT<t\leq T$ is uniformly consistent, asymptotically normal
\begin{align*}
\sqrt{vT}\left(\vartheta _{vT,T}^\star-\vartheta _0  \right)\Longrightarrow
\zeta \sim {\cal N}\left(0,{\bf I}(\vartheta_0)^{-1}\right)
\end{align*}
and the polynomial moments converge.
\end{proposition}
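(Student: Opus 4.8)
The plan is to repeat, for the two-dimensional parameter, the scheme of the proof of Theorem~\ref{T2}, the only change being that scalars become vectors and the scalar Fisher information is replaced by the matrix ${\bf I}(\vartheta_0)$ of \eqref{51}, which is non-degenerate so that ${\bf I}(\vartheta_0)^{-1}$ exists. For uniform consistency I would bound $\Pb_{\vartheta_0}(\sup_{\varepsilon_0\le v\le1}\|\vartheta^\star_{vT,T}-\vartheta_0\|>\nu)$ by three probabilities exactly as before: one controlling the preliminary MME $\vartheta^*_{\tau_\zT}$ through the moment estimate \eqref{3-24}, one controlling the vector stochastic integral $\int_{\tau_\zT}^{t}\dot M(\vartheta^*_{\tau_\zT},s)\,{\rm d}\bar W_s$ by a maximal inequality together with the boundedness of the polynomial moments of the components of $\dot M$, and one controlling the bias integral $\int_{\tau_\zT}^{t}\dot M(\vartheta^*_{\tau_\zT},s)[M(\vartheta_0,s)-M(\vartheta^*_{\tau_\zT},s)]\,{\rm d}s$ after writing $M(\vartheta_0,s)-M(\vartheta^*_{\tau_\zT},s)=\dot M(\bar\vartheta_{\tau_\zT},s)^\top(\vartheta_0-\vartheta^*_{\tau_\zT})$ and invoking $\Ex_{\vartheta_0}\|\vartheta^*_{\tau_\zT}-\vartheta_0\|^2\le C\tau_\zT^{-1}$ from \eqref{3-24}.

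For asymptotic normality I would substitute the innovation identity ${\rm d}X_s=M(\vartheta_0,s)\,{\rm d}s+\sigma\,{\rm d}\bar W_s$ into the defining expression of $\vartheta^\star_{t,T}$ and, with $t=vT$, arrive at the representation
\begin{align*}
\sqrt{vT}\bigl(\vartheta^\star_{vT,T}-\vartheta_0\bigr)
&={\bf I}(\vartheta^*_{\tau_\zT})^{-1}\,\frac{\sqrt{vT}}{\sigma(t-\tau_\zT)}\int_{\tau_\zT}^{t}\dot M(\vartheta^*_{\tau_\zT},s)\,{\rm d}\bar W_s\\
&\quad+{\bf I}(\vartheta^*_{\tau_\zT})^{-1}\Bigl[{\bf I}(\vartheta^*_{\tau_\zT})-\frac{1}{t-\tau_\zT}\int_{\tau_\zT}^{t}\frac{\dot M(\vartheta^*_{\tau_\zT},s)\dot M(\bar\vartheta_{\tau_\zT},s)^\top}{\sigma^2}\,{\rm d}s\Bigr]\sqrt{vT}\bigl(\vartheta^*_{\tau_\zT}-\vartheta_0\bigr),
\end{align*}
in full analogy with the scalar formula in Theorem~\ref{T2} and using $t-\tau_\zT\sim vT$. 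In the leading term I would replace $\dot M(\vartheta^*_{\tau_\zT},s)$ by $\dot M(\vartheta_0,s)$, the $L^2$-cost of this substitution being $O\bigl(vT\,T^{-1}\Ex_{\vartheta_0}\|\vartheta^*_{\tau_\zT}-\vartheta_0\|^2\bigr)=O(T^{-\delta})\to0$, and then apply the central limit theorem for the vector stochastic integral
\begin{align*}
\frac{1}{\sigma\sqrt{vT}}\int_{\tau_\zT}^{t}\dot M(\vartheta_0,s)\,{\rm d}\bar W_s\Longrightarrow {\cal N}\bigl(0,{\bf I}(\vartheta_0)\bigr),
\end{align*}
whose asymptotic covariance is $\frac{1}{\sigma^2 vT}\int_{\tau_\zT}^{t}\dot M(\vartheta_0,s)\dot M(\vartheta_0,s)^\top\,{\rm d}s\to{\bf I}(\vartheta_0)$; multiplying by ${\bf I}(\vartheta_0)^{-1}$ yields the announced limit ${\cal N}(0,{\bf I}(\vartheta_0)^{-1})$.

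It remains to show that the second line of the representation is negligible. As in Theorem~\ref{T2} I would expand the bracket, using continuity of ${\bf I}(\cdot)$ and the second derivatives $\ddot M$, as ${\bf I}(\vartheta_0)-(t-\tau_\zT)^{-1}\int_{\tau_\zT}^{t}\sigma^{-2}\dot M(\vartheta_0,s)\dot M(\vartheta_0,s)^\top\,{\rm d}s$ plus remainders of order $\|\vartheta^*_{\tau_\zT}-\vartheta_0\|$. The matrix law of large numbers gives the principal difference an $L^2$-norm of order $(vT)^{-1/2}$, so the whole bracket is of order $(vT)^{-1/2}+\tau_\zT^{-1/2}$ in probability, while $\sqrt{vT}\,\|\vartheta^*_{\tau_\zT}-\vartheta_0\|$ is of order $T^{(1-\delta)/2}$; their product is of order $T^{-\delta/2}+T^{(1-2\delta)/2}\to0$ precisely because $\delta>1/2$, which is exactly where this range of $\delta$ is needed. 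The convergence of the polynomial moments then follows, as in Theorem~\ref{T2}, from \eqref{3-24} together with the boundedness of all polynomial moments of $M(\vartheta,s)$ and its derivatives.

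I expect the main obstacle to be the matrix law of large numbers for the empirical Fisher information, namely establishing $(t-\tau_\zT)^{-1}\int_{\tau_\zT}^{t}\sigma^{-2}\dot M(\vartheta_0,s)\dot M(\vartheta_0,s)^\top\,{\rm d}s\to{\bf I}(\vartheta_0)$ with the required $L^2$-rate, since this forces one to compute the stationary second moments of the coupled Gaussian processes $\dot M_a(\vartheta_0,s)$ and $\dot M_f(\vartheta_0,s)$, and in particular the cross term $\dot M_a\dot M_f$ producing the off-diagonal entry of \eqref{51}. Verifying all entries of \eqref{51} via the stationary representations \eqref{2-13} and confirming the non-degeneracy of the matrix is the computationally heaviest step of the argument, but it is routine in the sense that it uses only the same stationary moment computations already exploited in the proofs of Theorem~\ref{T2} and Proposition~\ref{3-P5}.
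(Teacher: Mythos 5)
Your proposal is correct and is essentially the paper's own argument: the paper's entire proof of Proposition~\ref{P6} is the single sentence that it ``follows the same lines as the proof of Theorem~\ref{T2},'' and what you have written is precisely that vectorization — the three-probability decomposition for uniform consistency, the innovation representation with the stochastic-integral leading term and the empirical-Fisher-information bracket, the substitution $\dot M(\vartheta^*_{\tau_\zT},s)\to\dot M(\vartheta_0,s)$ at $L^2$-cost $O(T^{-\delta})$, and the $T^{(1-2\delta)/2}\to0$ bound on the remainder exploiting $\delta>1/2$. Your closing observation that the real labor lies in computing the stationary cross-moments of $\dot M_a$ and $\dot M_f$ to identify the off-diagonal entry of ${\bf I}(\vartheta_0)$ in \eqref{51} matches the paper, which states that this matrix was obtained from the stationary representations.
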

The proof follows the same lines as the proof of Theorem \ref{T2}.

 \section{Adaptive filter}

Consider once more the same partially observed linear system
\begin{align*}
{\rm d}X_t&=f\left(\vartheta \right)Y_t{\rm d}t+\sigma {\rm d}W_t,\qquad
X_0,\qquad 0\leq t\leq T,\\
{\rm d}Y_t&=-a\left(\vartheta \right)Y_t{\rm d}t+b\left(\vartheta \right){\rm
  d}V_t,\qquad  Y_0,\qquad t\geq 0,
\end{align*}
where $W_t,t\geq 0$ and $V_t,t\geq 0$ are independent Wiener processes. Here
the functions $f\left(\vartheta \right),a\left(\vartheta
\right),b\left(\vartheta \right),\vartheta \in\Theta  $ are known, positive
and smooth. The value of the parameter $\vartheta \in\Theta=\left(\alpha
,\beta \right)$ is unknown.  

The conditional expectation $m\left(\vartheta ,t\right)=\Ex_\vartheta
\left(Y_t|X_s,0\leq s\leq t\right)$ satisfies the Kalman-Bucy equations
\begin{align}
{\rm d}m\left(\vartheta ,t\right)&=-\left[a\left(\vartheta
  \right)+\frac{\gamma \left(\vartheta ,t\right)f\left(\vartheta
    \right)^2}{\sigma ^2}\right]m\left(\vartheta ,t\right) {\rm
  d}t+\frac{\gamma \left(\vartheta ,t\right)f\left(\vartheta \right)}{\sigma
  ^2}{\rm d}X_t,\quad m\left(\vartheta ,0\right),\nonumber\\ \frac{\partial
  \gamma \left(\vartheta ,t\right)}{\partial t}&=-2a\left(\vartheta \right)
\gamma \left(\vartheta ,t\right)-\frac{\gamma \left(\vartheta
  ,t\right)^2f\left(\vartheta \right)^2}{\sigma ^2}+b\left(\vartheta \right)^2
,\qquad \qquad \gamma \left(\vartheta ,0\right),
\label{62}
\end{align}
but to use these equations for the calculation of $m\left(\vartheta
,t\right),t\geq 0$ is impossible because the parameter $\vartheta $ is
unknown. Below we consider the problem of approximation $m\left(\vartheta
,t\right), 0\leq t\leq T$ by observations $X^T=\left(X_t,0\leq t\leq
T\right)$. The main idea is to use the One-step MLE-process on the time
interval $t\in (\tau _\zT, T]$
\begin{align*}
{\rm d}\vartheta _{t,T}^\star=\frac{\vartheta^* _{\tau_\zT}-\vartheta
  _{t,T}^\star }{{t-{\tau_\zT}}}{\rm d}t+\frac{{\rm I}\bigl(\vartheta^*
  _{\tau_\zT} \bigr)^{-1}{\dot M(\vartheta^* _{\tau_\zT} ,t)}{}}{\sigma
  ^2\left({t-{\tau_\zT}}\right)}\left[{\rm d}X_t-M(\vartheta^* _{\tau_\zT}
  ,t){\rm d}t\right].
\end{align*}
where $\vartheta^* _{\tau _\zT} $ is a MME constructed by observations
$X^{\tau _\zT}=\left(X_s,0\leq s\leq \tau _\zT\right)$, $\tau _\zT=T^\delta ,
\delta \in (1/2,1)$, $M\left(\vartheta ,s\right)=f\left(\vartheta \right)
m\left(\vartheta ,s\right) $ and to replace $\vartheta $ by   $
\vartheta_{t,T}^\star$ in the equations of 
filtration.

There are at least three possibilities of such  approximation of the random
process $m\left(\vartheta ,t\right),0\leq t\leq T$.

Consider the random process $\hat m_{t,T},\tau _\zT<t\leq T$ obtained as
solution of the equation
\begin{align*}
{\rm d}\hat m_{t,T}&=-a\left(\vartheta
_{t,T}^\star \right)\hat m_{t,T}{\rm d}t+\frac{\hat \gamma_{t,T}f\left(\vartheta _{t,T}^\star\right)}{\sigma ^2}\left[{\rm d}X_t-f\left(\vartheta
_{t,T}^\star\right)\hat m_{t,T}{\rm d}t\right],\quad \hat m_{\tau _\zT,T},
\end{align*}
where
\begin{align*}
\frac{\partial \hat\gamma_{t,T}}{\partial t}&=-2a\left(\vartheta _{t,T}^\star
\right)\hat \gamma_{t,T}-\frac{\hat \gamma _{t,T}^2f\left(\vartheta
  _{t,T}^\star \right)^2}{\sigma ^2}+b\left(\vartheta _{t,T}^\star \right)^2
,\qquad \qquad \hat \gamma_{ \tau _\zT  ,T}=\gamma \left(\vartheta^* _{\tau
  _\zT},\tau _\zT  \right). 
\end{align*}
Of course, a special attention has to be paid for the initial value $\hat
m_{\tau _\zT,T}$ (see \eqref{53}).

Another possibility is to use the explicit expression for the solution of the
equation \eqref{62}
\begin{align*}
\gamma \left(\vartheta ,t\right)=e^{-2r\left(\vartheta
  \right)t}\left[\frac{1}{\gamma\left(\vartheta ,0\right)-\gamma _*\left(\vartheta
    \right)}+\frac{f\left(\vartheta
    \right)^2}{2r\left(\vartheta
    \right)\sigma ^2}\left(1-e^{-2r\left(\vartheta
    \right)t}\right)\right]^{-1}+\gamma _*\left(\vartheta \right),
\end{align*}
where
\begin{align*}
r\left(\vartheta \right)=\left(a\left(\vartheta
    \right)^2+\frac{f\left(\vartheta
    \right)^2b\left(\vartheta
    \right)^2}{\sigma
  ^2}\right)^{1/2},\qquad\quad  \gamma _*\left(\vartheta \right)=
\frac{\sigma
  ^2\left[r\left(\vartheta
    \right)-a\left(\vartheta
    \right)\right]}{f\left(\vartheta
    \right)^2}.
\end{align*}
The approximation of $m\left(\vartheta ,t\right)$ can be realized by the
random process $\hat m_\zT\left(t\right),\tau _\zT<t\leq T$
\begin{align*}
{\rm d}\hat m_\zT\left(t\right)&=-a\left(\vartheta
_{t,T}^\star \right)\hat m_\zT\left(t\right){\rm d}t+\frac{ \gamma\left(
  \vartheta _{t,T}^\star,t\right)  f\left(\vartheta _{t,T}^\star\right)}{\sigma
  ^2}\left[{\rm d}X_t-f\left(\vartheta 
_{t,T}^\star\right)\hat m_\zT\left(t\right){\rm d}t\right],
\end{align*}
with the initial value  $m_\zT\left(\tau _\zT  \right) $.

The third opportunity is  to use the exponential convergence of the solution
$\gamma \left(\vartheta ,t\right)$ of the Riccati  to the steady
state value  $\gamma _*\left(\vartheta \right)$ and the approximation of
$m\left(\vartheta ,t\right)$ can be done with the help of the solution
$m_{t,T}^\star ,\tau _\zT<t\leq T$ of the equation
\begin{align}
\label{68}
{\rm d}m_{t,T}^\star=-r\left(\vartheta_{t,T}^\star \right)m_{t,T}^\star{\rm
  d}t+ B\left(\vartheta_{t,T}^\star \right)\; {\rm
  d}X_t,\qquad m_{\tau _\zT ,T}^\star
\end{align}
where we denoted  $B\left(\vartheta \right)=\left[r\left(\vartheta
  \right)-a\left(\vartheta \right)\right]f\left(\vartheta \right)^{-1}$.
To define $m_{\tau _\zT ,T}^\star $ we  can use the equality like
\eqref{52}, i.e., we put
\begin{align*}
m_{\tau _\zT ,T}^\star&=m(\vartheta^*_{\tau _\zT},0 )e^{-r(\vartheta^*_{\tau
    _\zT} )\tau _\zT }+B(\vartheta^*_{\tau _\zT} )\left[X_{\tau
    _\zT}-r(\vartheta^*_{\tau _\zT} ) \int_{0}^{\tau
    _\zT}e^{-r(\vartheta^*_{\tau _\zT} )\left(\tau _\zT-s \right) }X_s\,{\rm d}s   \right].
\end{align*}
Introduce the constants
\begin{align*}
K_1\left(\vartheta _0\right)&=-\frac{\left[\dot a\left(\vartheta _0\right)+\dot
    f\left(\vartheta _0\right)B
  \left(\vartheta _0\right)\right]\sigma }{f\left(\vartheta
  _0\right)\sqrt{2a(\vartheta _0){\rm I}\left(\vartheta _0\right)} },\\
K_2\left(\vartheta _0\right)&=\frac{\dot r\left(\vartheta _0\right)\sigma }{f\left(\vartheta
  _0\right)\sqrt{2r (\vartheta _0){\rm I}\left(\vartheta _0\right)} },\qquad \quad 
R_{1,2}\left(\vartheta
_0\right)= \frac{2K_1\left(\vartheta _0\right)K_2\left(\vartheta _0\right)\sqrt{a\left(\vartheta _0\right)r\left(\vartheta _0\right) }
}{a\left(\vartheta _0\right)+r\left(\vartheta _0\right)} ,
\end{align*}
and random processes
\begin{align*}
\xi _\zT^{\left(1\right)}\left(v\right)&=\sqrt{2a\left(\vartheta _0\right)T}\int_{v-\nu
  _\zT}^{v}e^{-a\left(\vartheta _0\right)\left(v-u\right)T}{\rm d}W_T\left(u
\right)\Longrightarrow \xi ^{\left(1\right)}\left(v\right)\sim {\cal
  N}\left(0,1\right)    ,\quad v\in \left[\varepsilon _0,1\right],\\
\xi _\zT^{\left(2\right)}\left(v\right)&=\sqrt{2r \left(\vartheta _0\right)T}\int_{v-\nu
  _\zT}^{v}e^{-r \left(\vartheta _0\right)\left(v-u\right)T}{\rm d}W_T\left(u
\right)\Longrightarrow \xi ^{\left(2\right)}\left(v\right)\sim {\cal
  N}\left(0,1\right)    ,\quad v\in \left[\varepsilon _0,1\right].
\end{align*}
Here $\varepsilon _0\in (0,1)$, $\nu _\zT=T^{-\kappa },\kappa \in \left(0,1\right)$ and note  that 
\begin{align*}
\Ex_{\vartheta _0}\xi ^{(1)}\left(v\right)\xi
^{(2)}\left(v\right)&=\frac{2\sqrt{a\left(\vartheta _0\right)r
    \left(\vartheta _0\right)}}{a\left(\vartheta _0\right)+r
  \left(\vartheta _0\right)},\quad \qquad \Ex_{\vartheta _0}\xi ^{(1)}\left(v_1\right)\xi
^{(2)}\left(v_2\right)=0
\end{align*}
if  $v_1\not=v_2$  because
\begin{align*}
\Ex_{\vartheta _0}\xi^{(1)} _\zT\left(v_1\right)\xi^{(2)}
_\zT\left(v_2\right)=\sqrt{r\left(\vartheta _0\right)a \left(\vartheta
  _0\right)}T \int_{\left(v_1-\nu _\zT\right)\vee \left(v_2-\nu
  _\zT\right)}^{v_1\wedge v_2}e^{-\left[r\left(\vartheta _0\right)+a
    \left(\vartheta _0\right)\right]\left(v-u\right)T}\1_{\AA_1\cap \AA_2}{\rm d}u=0
\end{align*}
for  $\left|v_2-v_1\right|>\nu _\zT\rightarrow 0$ and the sets $\AA_i=\left\{u:v_i-\nu
_\zT\leq u\leq  v_i\right\} $. 

Denote  the $2\times 2$ matrix $ \rho \left(\vartheta _0\right)$ with the
components $\rho _{i,j}\left(\vartheta _0\right)=\Ex_{\vartheta 
  _0}\xi^{(j)}\left(v\right)\xi^{(j)}\left(v\right)$. The Gaussian vector $\xi\left(v\right)
=\left(\xi^{(1)}\left(v\right) ,\xi^{(2)}\left(v\right) \right) \sim {\cal N}\left(0,\rho
\left(\vartheta _0\right)\right)$. 

 Recall that the
random process $\left(\eta _\zT\left(v\right), v\in
\left[\varepsilon _0,1\right]\right) $ converges in distribution to the Wiener
process $\left(W\left(v\right), v\in \left[\varepsilon _0,1\right]\right)
$ for any  $\varepsilon _0\in \left(0,1\right)$ (Theorem \ref{T2}).
The direct calculations allow to verify that 
\begin{align*}
\Ex_{\vartheta _0}\xi^{(i)} _\zT\left(v\right)\eta
_\zT\left(v\right)\longrightarrow \Ex_{\vartheta _0}\xi^{(i)}
\left(v\right)W\left(v\right)=0. 
\end{align*}
Therefore the vector $\xi\left(v\right) $ and $W\left(v\right)$ are
independent and the vectors $\xi^{(1)}  \left(v_1\right) $, $\xi^{(2)}\left(v_2\right) $
for $v_1\not=v_2$ are independent too.

\begin{theorem}
\label{T5} Suppose that the conditions of  Theorem \ref{T2}
hold. Then   for  $ v\in 
\left[\varepsilon _0,1\right]$   we have the convergence in distribution
\begin{align}
\label{69}
\sqrt{T}\left(m_{vT,T}^\star-m\left(\vartheta
_0,vT\right)\right)&\Longrightarrow
\frac{W\left(v\right)}{v}\left[K_1\left(\vartheta _0\right) \xi
^{\left(1\right)}\left(v\right)+K_2\left(\vartheta _0\right) \xi
^{\left(2\right)}\left(v\right)\right]
\end{align}
and the convergence of moments
\begin{align}
\label{70}
T\,\Ex_{\vartheta _0}\left(m_{vT,T}^\star-m\left(\vartheta
_0,vT\right)\right)^2\longrightarrow \frac{1}{v}\left[K_1\left(\vartheta
_0\right)^2+K_2\left(\vartheta _0\right)^2+2     R _{1,2}\left(\vartheta _0\right)\right].
\end{align}
\end{theorem}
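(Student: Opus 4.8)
The plan is to derive and then analyse the stochastic differential equation satisfied by the error $\delta_{t}=m_{t,T}^\star-m(\vartheta_0,t)$. First I would observe that in the stationary regime adopted here the filter $m(\vartheta_0,t)$ itself solves an equation of the same form as \eqref{68}, namely ${\rm d}m(\vartheta_0,t)=-r(\vartheta_0)m(\vartheta_0,t){\rm d}t+B(\vartheta_0){\rm d}X_t$, since $a(\vartheta_0)+\gamma_*(\vartheta_0)f(\vartheta_0)^2\sigma^{-2}=r(\vartheta_0)$ and $\gamma_*(\vartheta_0)f(\vartheta_0)\sigma^{-2}=B(\vartheta_0)$. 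Subtracting this from \eqref{68} and adding and subtracting $r(\vartheta_0)m_{t,T}^\star$ produces the linear equation ${\rm d}\delta_{t}=-r(\vartheta_0)\delta_{t}{\rm d}t-[r(\vartheta_{t,T}^\star)-r(\vartheta_0)]m_{t,T}^\star{\rm d}t+[B(\vartheta_{t,T}^\star)-B(\vartheta_0)]{\rm d}X_t$, into which I substitute the innovation representation ${\rm d}X_t=f(\vartheta_0)m(\vartheta_0,t){\rm d}t+\sigma{\rm d}\bar W_t$.

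Next I would linearise the coefficients about $\vartheta_0$, writing $r(\vartheta_{t,T}^\star)-r(\vartheta_0)=\dot r(\vartheta_0)(\vartheta_{t,T}^\star-\vartheta_0)+O((\vartheta_{t,T}^\star-\vartheta_0)^2)$ and similarly for $B$, and replacing $m_{t,T}^\star$ by $m(\vartheta_0,s)$ in the leading drift. By Theorem \ref{T2} one has $\vartheta_{t,T}^\star-\vartheta_0=O_P(T^{-1/2})$ uniformly, so every discarded term is a product of at least two such factors and is $o_P(T^{-1/2})$ after the final $\sqrt T$ scaling. Solving the linear equation by variation of constants gives, up to a transient $e^{-r(\vartheta_0)(t-\tau_\zT)}\delta_{\tau_\zT}\to0$, that $\delta_{t}$ equals $(\vartheta_{t,T}^\star-\vartheta_0)$ times the convolution of the kernel $e^{-r(\vartheta_0)(t-s)}$ against $-[\dot a(\vartheta_0)+B(\vartheta_0)\dot f(\vartheta_0)]m(\vartheta_0,s){\rm d}s+\dot B(\vartheta_0)\sigma{\rm d}\bar W_s$, where I use the identity $\dot B(\vartheta_0)f(\vartheta_0)-\dot r(\vartheta_0)=-[\dot a(\vartheta_0)+B(\vartheta_0)\dot f(\vartheta_0)]$. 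Since $r(\vartheta_0)$ is a fixed constant, the kernel localises the integral to a real-time window of length $O(1)$ near $s=t$, i.e. length $O(1/T)$ on the $v=s/T$ scale, over which $\vartheta_{s,T}^\star$ is essentially frozen at $\vartheta_{vT,T}^\star$; this freezing I would justify quantitatively from the recurrent representation \eqref{59}.

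The core computation is to identify the two localised stochastic integrals with $\xi_\zT^{(1)}(v)$ and $\xi_\zT^{(2)}(v)$. Using the stationary Ornstein--Uhlenbeck representation $m(\vartheta_0,s)=\sigma B(\vartheta_0)\int_{-\infty}^{s}e^{-a(\vartheta_0)(s-q)}{\rm d}\bar W_q$ and a Fubini interchange, the drift integral $\int_{\tau_\zT}^{t}e^{-r(\vartheta_0)(t-s)}m(\vartheta_0,s){\rm d}s$ reduces to $\frac{\sigma}{f(\vartheta_0)}\int_{\tau_\zT}^{t}[e^{-a(\vartheta_0)(t-q)}-e^{-r(\vartheta_0)(t-q)}]{\rm d}\bar W_q$, whose two pieces, after truncating the exponentially small tail down to the window $[t-\nu_\zT T,t]$, are exactly $(2a(\vartheta_0))^{-1/2}\xi_\zT^{(1)}(v)$ and $(2r(\vartheta_0))^{-1/2}\xi_\zT^{(2)}(v)$; the diffusion integral gives a further $(2r(\vartheta_0))^{-1/2}\xi_\zT^{(2)}(v)$. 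Collecting the coefficients and simplifying with the same identities shows they equal $K_1(\vartheta_0)\sqrt{{\rm I}(\vartheta_0)}$ and $K_2(\vartheta_0)\sqrt{{\rm I}(\vartheta_0)}$. Writing $\sqrt{T}(\vartheta_{vT,T}^\star-\vartheta_0)=\eta_\zT(v)/(v\sqrt{{\rm I}(\vartheta_0)})$ and invoking Theorem \ref{T2} together with the stated asymptotic independence $\Ex_{\vartheta_0}\xi_\zT^{(i)}(v)\eta_\zT(v)\to0$, the factor $\sqrt{{\rm I}(\vartheta_0)}$ cancels and \eqref{69} follows. For \eqref{70} I would establish uniform integrability of $T\delta_{vT}^2$ from the uniform moment bounds of Theorem \ref{T2} and Gaussian moment estimates, then compute the limiting variance as $\Ex[W(v)^2]\,v^{-2}\,\Ex[(K_1\xi^{(1)}+K_2\xi^{(2)})^2]$, which with $\Ex\xi^{(1)}\xi^{(2)}=2\sqrt{a(\vartheta_0)r(\vartheta_0)}/(a(\vartheta_0)+r(\vartheta_0))$ and the definition of $R_{1,2}(\vartheta_0)$ gives the stated right-hand side.

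The hardest part will be the error control underlying steps two and three: justifying rigorously that freezing $\vartheta_{s,T}^\star$ at $\vartheta_{vT,T}^\star$ over the localisation window, dropping the quadratic Taylor remainders, and replacing $m_{t,T}^\star$ by $m(\vartheta_0,s)$ all produce contributions that are $o_P(T^{-1/2})$ and uniformly integrable; and, most delicately, that the \emph{global} fluctuation $\eta_\zT(v)$ of the One-step MLE-process and the \emph{local} fluctuations $\xi_\zT^{(i)}(v)$, though driven by the same innovation $\bar W$, become asymptotically independent because their driving windows $[0,vT]$ and $[vT-\nu_\zT T,vT]$ overlap only on a set of vanishing relative measure $\nu_\zT\to0$.
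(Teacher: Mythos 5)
Your proposal is correct and follows essentially the same route as the paper's proof: the same variation-of-constants error decomposition driven by the kernel $e^{-r\left(\vartheta _0\right)\left(t-s\right)}$ and the innovation representation of ${\rm d}X_t$, the same linearisation in $\vartheta _{t,T}^\star-\vartheta _0$ with quadratic remainders discarded, localisation of the integrals to the window $\left[v-\nu _\zT,v\right]$, freezing of the estimator fluctuation at $v$, and identification of the localised integrals with $K_1\left(\vartheta _0\right)\xi _\zT^{\left(1\right)}\left(v\right)+K_2\left(\vartheta _0\right)\xi _\zT^{\left(2\right)}\left(v\right)$ via the stationary Ornstein--Uhlenbeck representation and Fubini, ending with the same product representation $\frac{\eta _\zT\left(v\right)}{v}\sum_i K_i\xi _\zT^{\left(i\right)}\left(v\right)+o\left(1\right)$. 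The only cosmetic deviations are that you justify the freezing from the recurrent equation \eqref{59} where the paper uses the increment bound \eqref{58}, and you invoke uniform integrability explicitly for \eqref{70}.
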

\begin{proof} Let us denote
\begin{align*}
r_m=\inf_{\vartheta \in\Theta } r\left(\vartheta \right),\qquad \quad
B_M=\sup_{\vartheta \in\Theta } B\left(\vartheta \right) . 
\end{align*}
We need two estimates. To obtain the first  one we write 
\begin{align*}
m_{t,T}^\star&=m_{\tau _\zT ,T}^\star e^{-\int_{\tau _\zT}^{t}r(\vartheta
  _{s,T}^\star){\rm d}s}+ f\left(\vartheta _0\right)\int_{\tau
  _\zT}^{t}e^{-\int_{\tau _\zT}^{t}r(\vartheta 
  _{s,T}^\star){\rm d}s}B\left(\vartheta
  _{s,T}^\star \right)m\left(\vartheta
_0,s\right){\rm d}s \\
&\qquad +\sigma \int_{\tau _\zT}^{t}e^{-\int_{\tau _\zT}^{t}r(\vartheta
  _{s,T}^\star){\rm d}s}B\left(\vartheta
  _{s,T}^\star \right) {\rm d}\bar W_s
\end{align*}
and the estimate is 
\begin{align*}
\Ex_{\vartheta _0} \left|m_{t,T}^\star \right|^2&\leq 3e^{-r_m t}
\Ex_{\vartheta _0} \left|m_{\tau _\zT ,T}^\star \right|^2 +3\sigma
^2B_M^2\int_{\tau _\zT}^{t}e^{-  r_m \left(t-s\right)}{\rm d}s \\
&\; +3f\left(\vartheta
_0\right)^2B_M^2  \int_{\tau _\zT}^{t}e^{-  r_m\left(t-s\right)} \int_{\tau
  _\zT}^{t}e^{-  r_m\left(t-q\right)} \Ex_{\vartheta _0} m\left(\vartheta
_0,s\right)m\left(\vartheta _0,q\right){\rm d}s{\rm d}q \leq C ,
\end{align*}
where the constant $C>0$ does not depend on $t$. 

Similarly we can verify the estimate: for any $p\geq 2$ there exists a constant
$C=C\left(p\right)>0$ such that
\begin{align*}
\sup_{\vartheta _0\in\KK}\sup_{\tau _\zT\leq  t\leq T}\Ex_{\vartheta _0} \left|m_{t,T}^\star \right|^p&\leq C.
\end{align*}
To obtain the second estimate we write for the difference $\Delta
_{t,T}=m_{t,T}^\star-m\left(\vartheta _0,t\right) $ and $\Delta
_{vT,T}$ the representations 
\begin{align*}
\Delta _{t,T}& = \int_{\tau
  _\zT}^{t}e^{-r\left(\vartheta
  _0\right)\left(t-s\right)}\left[r\left(\vartheta _0\right)-r\left(\vartheta
  _{s,T}^\star\right)\right]m_{s,T}^\star{\rm d}s \\ 
&\qquad \quad +\int_{\tau
  _\zT}^{t}e^{-r\left(\vartheta
  _0\right)\left(t-s\right)}\left[B\left(\vartheta
  _{s,T}^\star\right)-B\left(\vartheta _0\right)\right]f\left(\vartheta
_0\right)m\left(\vartheta _0,s\right){\rm d}s\\ 
&\qquad \quad +\sigma \int_{\tau _\zT}^{t}e^{-r\left(\vartheta
  _0\right)\left(t-s\right)}\left[B\left(\vartheta
  _{s,T}^\star\right)-B\left(\vartheta _0\right)\right]{\rm d}\bar W_s\\
& = T\int_{\varepsilon 
  _\zT}^{v}e^{-r\left(\vartheta
  _0\right)\left(v-u\right)T}\left[r\left(\vartheta _0\right)-r\left(\vartheta
  _{uT,T}^\star\right)\right]m_{uT,T}^\star{\rm d}u \\ 
&\qquad \quad +T\int_{\varepsilon 
  _\zT}^{v}e^{-r\left(\vartheta
  _0\right)\left(v-u\right)T}\left[B\left(\vartheta
  _{uT,T}^\star\right)-B\left(\vartheta _0\right)\right]f\left(\vartheta
_0\right)m\left(\vartheta _0,uT\right){\rm d}u\\ 
&\qquad \quad +\sqrt{T}\sigma \int_{\varepsilon  _\zT}^{v}e^{-r\left(\vartheta
  _0\right)\left(v-u\right)T}\left[B\left(\vartheta
  _{uT,T}^\star\right)-B\left(\vartheta _0\right)\right]{\rm d}
W_T\left(u\right)\equiv \Delta _{vT,T},
\end{align*}
where $\varepsilon _\zT=T^{\delta -1}$  and  $W_T\left(u\right)=T^{-1/2}\bar
W_{uT} $ is a Wiener process.  

 Let us put $\nu
_\zT=T^{-\kappa }, \kappa \in (0,1)$. Then we have the estimate
\begin{align*}
&T^{3/2}\int_{\varepsilon _\zT}^{v-\nu 
  _\zT}e^{-r\left(\vartheta
  _0\right)\left(v-u\right)T}\left|\left[r\left(\vartheta _0\right)-r\left(\vartheta
  _{uT,T}^\star\right)\right]m_{uT,T}^\star\right|{\rm d}u\\
&\qquad \qquad \qquad \leq CT^{3/2}e^{-r\left(\vartheta
  _0\right)T^{1-\kappa }}\int_{\varepsilon _\zT}^{v-\nu 
  _\zT}\left|m_{uT,T}^\star\right|{\rm d}u\rightarrow 0.
\end{align*}
We study $\Delta _{vT,T} $ for the values $v\in (\varepsilon _0, 1)$, where
$\varepsilon _0\in (0,1)$.  Therefore it is sufficient to study the following
integrals
\begin{align*}
\sqrt{T}\Delta _{vT,T}& = T^{3/2}\int_{v-\nu_\zT}^{v}e^{-r\left(\vartheta
  _0\right)\left(v-u\right)T}\left[r\left(\vartheta _0\right)-r\left(\vartheta
  _{uT,T}^\star\right)\right]m_{uT,T}^\star{\rm d}u \\ 
&\qquad \quad +T^{3/2}\int_{v-\nu_\zT}^{v}e^{-r\left(\vartheta
  _0\right)\left(v-u\right)T}\left[B\left(\vartheta
  _{uT,T}^\star\right)-B\left(\vartheta _0\right)\right]f\left(\vartheta
_0\right)m\left(\vartheta _0,uT\right){\rm d}u\\ 
&\qquad \quad +\sigma{T}\int _{v-\nu_\zT}^{v}e^{-r\left(\vartheta
  _0\right)\left(v-u\right)T}\left[B\left(\vartheta
  _{uT,T}^\star\right)-B\left(\vartheta _0\right)\right]{\rm d}\bar W_T\left(u\right)+o\left(1\right).
\end{align*}

We have
\begin{align*}
\Ex_{\vartheta _0}\left(\Delta _{t,T}\right)^2& \leq 3 \Ex_{\vartheta _0}\left(\int_{\tau
  _\zT}^{t}e^{-r\left(\vartheta
  _0\right)\left(t-s\right)}\left[r\left(\vartheta _0\right)-r\left(\vartheta
  _{s,T}^\star\right)\right]m_{s,T}^\star{\rm d}s\right)^2 \\ 
&\qquad \quad +3\Ex_{\vartheta _0}\left(\int_{\tau
  _\zT}^{t}e^{-r\left(\vartheta
  _0\right)\left(t-s\right)}\left[B\left(\vartheta
  _{s,T}^\star\right)-B\left(\vartheta _0\right)\right]f\left(\vartheta
_0\right)m\left(\vartheta _0,s\right){\rm d}s\right)^2\\ 
&\qquad \quad +3\sigma^2 \int_{\tau _\zT}^{t}e^{-2r\left(\vartheta
  _0\right)\left(t-s\right)}\Ex_{\vartheta _0} \left[B\left(\vartheta
  _{s,T}^\star\right)-B\left(\vartheta _0\right)\right]^2{\rm d}s.
\end{align*}
Recall that the moments of One-step MLE-process  and  the derivatives  $\dot r\left(\vartheta \right)$ and $\dot
B\left(\vartheta \right)$ are bounded. Hence 
\begin{align*}
&\Ex_{\vartheta _0}\left| r\left(\vartheta _0\right)-r\left(\vartheta
  _{uT,T}^\star \right) \right|^2\leq C\Ex_{\vartheta _0}\left| \vartheta _0 - \vartheta
  _{uT,T}^\star  \right|^2\leq CT^{-1 },\\
&\Ex_{\vartheta _0}\left| B\left(\vartheta _{uT,T}^\star\right)-B\left(\vartheta
  _0 \right) \right|^2\leq C\Ex_{\vartheta _0}\left|  \vartheta
  _{uT,T}^\star-\vartheta _0   \right|^2\leq CT^{-1}
\end{align*}
and using elementary estimates we obtain 
\begin{align*}
\Ex_{\vartheta _0}\left(\Delta _{t,T}\right)^2=o\left(1\right)\qquad {\rm and}
\qquad m_{t,T}^\star=m\left(\vartheta _0,t\right)+O\left(\delta _T\right),
\end{align*}
where we denoted $\delta _T=\vartheta
  _{uT,T}^\star-\vartheta _0 $.

We are interested by the asymptotic ($T\rightarrow \infty $) behavior of the
estimation errors   $\Delta _{t,T} $ and  $T\Ex_{\vartheta _0}\Delta _{t,T}^2
$. That is why we  consider the 
re-parametrization $t=vT$ and study $\Delta _{vT,T} $.      Note that
\begin{align*}
f\left(\vartheta _0\right)\left[B(\vartheta _{uT,T}^\star)-B\left(\vartheta
  _0\right)\right]&=\frac{f\left(\vartheta _0\right)\Gamma (\vartheta
  _{uT,T}^\star)}{f (\vartheta _{uT,T}^\star)}-\Gamma (\vartheta_0)\\
 &=\Gamma (\vartheta _{uT,T}^\star)-\Gamma \left(\vartheta
_0\right)+\frac{(f\left(\vartheta _0\right)-f (\vartheta _{uT,T}^\star)) }{f
  (\vartheta _{uT,T}^\star)} \Gamma (\vartheta _{uT,T}^\star)
\end{align*}
and
\begin{align*}
&\left[r\left(\vartheta _0\right)-r\left(\vartheta
    _{uT,T}^\star\right)\right]m_{uT,T}^\star+f\left(\vartheta
  _0\right)\left[B(\vartheta _{uT,T}^\star)-B\left(\vartheta
    _0\right)\right]m\left(\vartheta _0,uT\right)\\
&\qquad =\left[r\left(\vartheta _0\right)-r\left(\vartheta
    _{uT,T}^\star\right)\right]m\left(\vartheta
  _0,uT\right)\left(1+O\left(\delta _T\right)\right)+\left[\Gamma (\vartheta
  _{uT,T}^\star)-\Gamma \left(\vartheta _0\right)\right]m\left(\vartheta
  _0,uT\right)\\
&\qquad \qquad \qquad +\frac{(f\left(\vartheta _0\right)-f (\vartheta
  _{uT,T}^\star))  }{f (\vartheta _{uT,T}^\star)} \Gamma (\vartheta 
  _{uT,T}^\star)m\left(\vartheta  _0,uT\right)\\
&\qquad =-\left[\dot a\left(\vartheta _0\right)+ \frac{\dot f\left(\vartheta
      _0\right)}{f\left(\vartheta _0\right)}\Gamma 
  \left(\vartheta _0\right) \right]   m\left(\vartheta  _0,uT\right)\delta
  _T+O\left(\delta _T^2\right).
\end{align*}
These relations allow us to write
\begin{align*}
& \sqrt{T}\Delta _{vT,T}= T^{3/2}\int_{v-\nu 
    _\zT}^{v}e^{-r\left(\vartheta
    _0\right)\left(v-u\right)T}\left[r\left(\vartheta
    _0\right)-r\left(\vartheta _{uT,T}^\star\right)\right]m\left(\vartheta
  _0,uT\right) {\rm d}u\left(1+o\left(\delta _T\right)\right) \\ 
& \qquad +T^{3/2}\int_{v-\nu 
    _\zT}^{v}e^{-r\left(\vartheta
    _0\right)\left(v-u\right)T}\left[\frac{f\left(\vartheta _0\right)\Gamma \left(\vartheta
    _{uT,T}^\star\right)}{f(\vartheta
    _{uT,T}^\star)}-\Gamma \left(\vartheta _0\right)\right]m\left(\vartheta
  _0,uT\right){\rm d}u\left(1+o\left(\delta _T\right)\right)\\ 
&\qquad +\sigma{T}\int _{v-\nu 
    _\zT}^{v}e^{-r\left(\vartheta
    _0\right)\left(v-u\right)T}\left[B\left(\vartheta
    _{uT,T}^\star\right)-B\left(\vartheta _0\right)\right]{\rm d}\bar
  W_T\left(u\right)\\
&\quad= -\left[\dot a\left(\vartheta _0\right)+ \frac{\dot f\left(\vartheta
      _0\right)}{f\left(\vartheta _0\right)}\Gamma 
  \left(\vartheta _0\right) \right]  T^{3/2}\int_{v-\nu 
    _\zT}^{v}e^{-r\left(\vartheta
    _0\right)\left(v-u\right)T}\left(\vartheta _{uT,T}^\star-\vartheta
  _0\right)    m\left(\vartheta   _0,uT\right){\rm d}u\\
&\qquad +\dot B\left(\vartheta _0\right)\sigma T\int_{v-\nu 
    _\zT}^{v}e^{-r\left(\vartheta
    _0\right)\left(v-u\right)T}\left(\vartheta _{uT,T}^\star-\vartheta
  _0\right)    {\rm d}\bar W_{T}\left(u\right)+o\left(1\right)\\
&\quad=C\left(\vartheta_0 \right)  T\int_{v-\nu 
    _\zT}^{v}e^{-r\left(\vartheta
    _0\right)\left(v-u\right)T}u^{-1}\eta _\zT\left(u\right)    m\left(\vartheta
  _0,uT\right){\rm d}u\\ 
&\qquad +D\left(\vartheta_0 \right) \sqrt{T}\int_{v-\nu 
    _\zT}^{v}e^{-r\left(\vartheta 
    _0\right)\left(v-u\right)T} u^{-1}  \eta _\zT\left(u\right)   {\rm d}\bar
  W_{T}\left(u\right)+o\left(1\right),
\end{align*}
where 
\begin{align*}
C\left(\vartheta_0 \right)= -\left[\dot a\left(\vartheta _0\right)+ \frac{\dot
    f\left(\vartheta _0\right)}{f\left(\vartheta _0\right)}\Gamma
  \left(\vartheta _0\right) \right]{\rm I}\left(\vartheta _0\right)^{-1/2}
,\qquad D\left(\vartheta_0 \right)=\dot B\left(\vartheta _0\right){\rm
  I}\left(\vartheta _0\right)^{-1/2}\sigma.
\end{align*}
We have the estimate
\begin{align*}
&\Ex_{\vartheta _0}T\int_{v-\nu _\zT}^{v}e^{-r\left(\vartheta
    _0\right)\left(v-u\right)T}\left|u^{-1}\eta _\zT\left(u\right)- v^{-1}\eta
  _\zT\left(v\right) \right| \left| m\left(\vartheta _0,uT\right)\right|{\rm
    d}u \\
 &\qquad \leq T\int_{v-\nu _\zT}^{v}e^{-r\left(\vartheta
    _0\right)\left(v-u\right)T}\left(\Ex_{\vartheta _0}\left|u^{-1}\eta
  _\zT\left(u\right)- v^{-1}\eta _\zT\left(v\right) \right|^2 \Ex_{\vartheta
    _0}\left| m\left(\vartheta _0,uT\right)\right|^2\right)^{1/2}{\rm d}u\\
 &\qquad \leq C T\int_{v-\nu _\zT}^{v}e^{-r\left(\vartheta
    _0\right)\left(v-u\right)T}\left|v-u\right|^{1/2}{\rm d}u\leq C\nu
  _T^{1/2}\longrightarrow 0, 
\end{align*}
where we used the estimate \eqref{58}.

Therefore
\begin{align*}
& T\int_{v-\nu 
    _\zT}^{v}e^{-r\left(\vartheta 
    _0\right)\left(v-u\right)T} u^{-1}  \eta _\zT\left(u\right)
m\left(\vartheta _0,uT\right)  {\rm d}u\nonumber\\ 
&\qquad \qquad = v^{-1}  \eta _\zT\left(v\right)  T\int_{v-\nu 
    _\zT}^{v}e^{-r\left(\vartheta 
    _0\right)\left(v-u\right)T}  m\left(\vartheta _0,uT\right)  {\rm d}u+O( \nu
  _T^{1/2}   ).
\end{align*}
The substitution of 
\begin{align*}
m\left(\vartheta _0,uT\right) =\frac{\Gamma\left(\vartheta _0\right)\sigma }
 { f\left(\vartheta _0\right)}\sqrt{T} \int_{-\infty }^{u}e^{-a\left(\vartheta
   _0\right)\left(u-z\right)T }{\rm d}\bar W_T\left(z\right)
\end{align*}
in the last integral gives us the following expression
\begin{align*}
&T\int_{v-\nu _\zT}^{v}e^{-r\left(\vartheta _0\right)\left(v-u\right)T}
  m\left(\vartheta _0,uT\right) {\rm d}u\\ &\qquad \quad
  =\frac{\Gamma\left(\vartheta _0\right)\sigma } { f\left(\vartheta
    _0\right)}T^{3/2}\int_{v-\nu _\zT}^{v}e^{-r\left(\vartheta
    _0\right)\left(v-u\right)T} \int_{-\infty }^{u}e^{-a\left(\vartheta
    _0\right)\left(u-z\right)T }{\rm d}\bar W_T\left(z\right)\,{\rm
    d}u\\ &\qquad \quad =\frac{\Gamma\left(\vartheta _0\right)\sigma } {
    f\left(\vartheta _0\right)}T^{3/2}e^{-r\left(\vartheta
    _0\right)vT}\int_{-\infty }^{v}e^{a\left(\vartheta
    _0\right)zT }\int_{\left(v-\nu _\zT\right)\vee z }^{v}e^{\Gamma \left(\vartheta
    _0\right)uT}{\rm d}u\;{\rm d}\bar W_T\left(z\right)\\
 &\qquad \quad =\frac{\sigma\sqrt{T} } {f\left(\vartheta
    _0\right)}e^{-r\left(\vartheta 
    _0\right)vT}\int_{-\infty }^{v}e^{a\left(\vartheta
    _0\right)zT }\left[e^{\Gamma \left(\vartheta
    _0\right)vT}-e^{\Gamma \left(\vartheta
    _0\right) \left[\left(v-\nu _\zT\right)\vee z \right]  T}\right]\;{\rm
    d}\bar W_T\left(z\right)\\
 &\qquad \quad =\frac{\sigma\sqrt{T} } {f\left(\vartheta
    _0\right)}\int_{v-\nu _T}^{v}\left[e^{-a\left(\vartheta
    _0\right)\left(v-z\right)T }- e^{-r\left(\vartheta
    _0\right)\left(v-z\right)T }    \right]\;{\rm
    d}\bar W_T\left(z\right)\left(1+o\left(1\right)\right).
\end{align*}
Remark that
\begin{align*}
&\Ex_{\vartheta _0}\left(\sqrt{T} \int_{-\infty  }^{v-\nu _T}e^{-r
  \left(\vartheta _0\right)\left(v-z\right)T }\;{\rm d}\bar W_T\left(z\right)
\right)^2\\
&\qquad \qquad  =T\int_{-\infty  }^{v-\nu _T}e^{-2r
  \left(\vartheta _0\right)\left(v-z\right)T }\;{\rm d}z=\frac{1}{2r
  \left(\vartheta _0\right) }e^{-2r
  \left(\vartheta _0\right) \nu _TT }\longrightarrow 0.
\end{align*}
Therefore we obtained the representation
\begin{align*}
&T\int_{v-\nu _\zT}^{v}e^{-r\left(\vartheta _0\right)\left(v-u\right)T}
  m\left(\vartheta _0,uT\right) {\rm d}u\\ &\qquad \quad
  =\frac{\sigma\sqrt{T} } { f\left(\vartheta
    _0\right)} \int_{v-\nu _\zT }^{v}\left[ e^{-a\left(\vartheta
    _0\right)\left(v-z\right)T }-e^{-r \left(\vartheta
    _0\right)\left(v-z\right)T }   \right]\;{\rm
    d}\bar W_T\left(z\right)\left(1+o\left(1\right)\right) .
\end{align*}

Let us denote
\begin{align*}
 Q_T\left(v\right)& =\sqrt{T}\int_{v-\nu 
    _\zT}^{v}e^{-r\left(\vartheta 
    _0\right)\left(v-u\right)T} u^{-1}  \eta _\zT\left(u\right)   {\rm d}\bar
  W_{T}\left(u\right),\\
U_T\left(v\right)& = \eta _\zT\left(v\right) \sqrt{T}\int_{v-\nu 
    _\zT}^{v}e^{-r\left(\vartheta 
    _0\right)\left(v-u\right)T}   {\rm d}\bar
  W_{T}\left(u\right),\\
g\left(\vartheta _0,zT\right)&=\frac{\dot M\left(\vartheta _0,
zT\right) }{\sigma\sqrt{{\rm I}\left(\vartheta
    _0\right)}},
\end{align*}
and calculate $\Ex_{\vartheta
  _0}\left[Q_T\left(v\right)-v^{-1} U_T\left(v\right)\right]^2$.
Recall that $\Ex_{\vartheta _0}g\left(\vartheta _0,
zT\right)^2=1 $ and 
\begin{align*}
\eta _\zT\left(v\right)=\frac{1 }{\sigma\sqrt{{\rm I}\left(\vartheta
    _0\right)}}\int_{\varepsilon _\zT}^{v}\dot M\left(\vartheta _0,
zT\right){\rm d}\bar W_T\left(z\right)=\int_{\varepsilon _\zT}^{v}g\left(\vartheta _0,
zT\right){\rm d}\bar W_T\left(z\right).
\end{align*}

We have 
\begin{align*}
\Ex_{\vartheta _0}Q_T\left(v\right)^2&=T\int_{v-\nu
  _\zT}^{v}e^{-2r\left(\vartheta _0\right)\left(v-u\right)T} u^{-2}
\Ex_{\vartheta _0} \eta _\zT\left(u\right)^2 {\rm d}u \\
 &=T\int_{v-\nu
  _\zT}^{v}e^{-2r\left(\vartheta _0\right)\left(v-u\right)T} u^{-1} {\rm
  d}u\left(1+O\left(\varepsilon _\zT\right)\right)\\
 &=\frac{1}{2r\left(\vartheta _0\right)
  v}+T\int_{v-\nu _\zT}^{v}e^{-2r\left(\vartheta _0\right)\left(v-u\right)T}
\frac{\left(v-u\right)}{vu} {\rm d}u+O\left(\varepsilon _\zT\right)\\
 &=\frac{1}{2r\left(\vartheta _0\right)  v}+O\left(\nu _\zT\right)+O\left(\varepsilon
_\zT\right) .
\end{align*}
Note that the integrals
\begin{align*}
I_{\varepsilon _\zT}^{v-\nu _\zT}=  \int_{\varepsilon _\zT}^{v-\nu _\zT} g(\vartheta _0,zT){\rm d}\bar
W_T\left(z\right)\quad {\rm and}\quad J_{v-\nu _\zT}^{v}=\sqrt{T}\int_{v-\nu _\zT}^{v}e^{-r\left(\vartheta
  _0\right)\left(v-u\right)T} {\rm d}\bar
W_{T}\left(u\right)
\end{align*}
are independent random variables. This allows us to write
\begin{align*}
\Ex_{\vartheta _0}U_T\left(v\right)^2&=\Ex_{\vartheta
  _0}\left(\int_{\varepsilon _\zT}^{v} g(\vartheta _0,zT){\rm d}\bar
W_T\left(z\right)\right) ^2\left(\sqrt{T}\int_{v-\nu _\zT}^{v}e^{-r\left(\vartheta
  _0\right)\left(v-u\right)T} {\rm d}\bar
W_{T}\left(u\right)\right)^2\\
 &=\Ex_{\vartheta  _0}\left(I_{\varepsilon _\zT}^{v-\nu _\zT}+I_{v-\nu
  _\zT}^{v}   \right)^2\left(J_{v-\nu _\zT}^{v}\right)^2 \\
 &=\Ex_{\vartheta  _0}\left(I_{\varepsilon _\zT}^{v-\nu _\zT}
\right)^2\Ex_{\vartheta  _0}\left(J_{v-\nu _\zT}^{v}\right)^2+2  \Ex_{\vartheta  _0}\left(I_{\varepsilon _\zT}^{v-\nu _\zT}I_{v-\nu
  _\zT}^{v}  
\right)\left(J_{v-\nu _\zT}^{v}\right)^2\\
 & \qquad \qquad +\Ex_{\vartheta  _0}\left(I_{v-\nu
  _\zT}^{v}  
\right)^2\left(J_{v-\nu _\zT}^{v}\right)^2.
\end{align*}
We have the following expressions  for the first term
\begin{align*}
\Ex_{\vartheta  _0}\left(I_{\varepsilon _\zT}^{v-\nu _\zT}
\right)^2\Ex_{\vartheta  _0}\left(J_{v-\nu _\zT}^{v}\right)^2=\left(v-\nu
_\zT-\varepsilon _\zT\right) \frac{1}{2r\left(\vartheta
  _0\right)}\left(1+o\left(1\right)\right) ,
\end{align*}
for the second term
\begin{align*}
\Ex_{\vartheta _0}\left(I_{\varepsilon _\zT}^{v-\nu _\zT}I_{v-\nu _\zT}^{v}
\right)\left(J_{v-\nu _\zT}^{v}\right)^2\leq \left[\Ex_{\vartheta
    _0}\left(I_{v-\nu 
  _\zT}^{v}\right)^2 \Ex_{\vartheta _0}\left(I_{\varepsilon _\zT}^{v-\nu
  _\zT}\right)^2\left(J_{v-\nu _\zT}^{v}\right)^4 \right]^{1/2}\leq C\nu
_\zT^{1/2} ,
\end{align*}
and for the third term
\begin{align*}
\Ex_{\vartheta _0}\left(I_{v-\nu _\zT}^{v} \right)^2\left(J_{v-\nu
  _\zT}^{v}\right)^2&\leq \left[\Ex_{\vartheta _0}\left(I_{v-\nu _\zT}^{v}
\right)^4\Ex_{\vartheta _0}\left(J_{v-\nu _\zT}^{v}\right)^4\right]^{1/2}\\
&\leq
C \left[ \Ex_{\vartheta _0}  \left(\int_{v-\nu _\zT}^{v} g(\vartheta
  _0,zT)^2{\rm d}z\right) ^2  \right]^{1/2}\leq C\nu _\zT. 
\end{align*}

Hence
\begin{align*}
\Ex_{\vartheta _0}\left[Q_T\left(v\right)-v^{-1} U_T\left(v\right)\right]^2=O\left(\varepsilon _\zT\right)+O(\nu _\zT^{1/2})\longrightarrow 0.
\end{align*}
Therefore we obtained the representation for the derivative
\begin{align}
\label{39a}
\dot m\left(\vartheta _0,vT\right)&=-\frac{\sigma \left[\dot a\left(\vartheta
    _0\right)+B\left(\vartheta _0\right)\dot f\left(\vartheta
    _0\right)\right]}{f\left(\vartheta _0\right)\sqrt{2a\left(\vartheta
    _0\right){\rm I}\left(\vartheta _0\right)}}\,\xi _T^{\left(1\right)}\left(v\right) +\frac{\sigma \dot
  r\left(\vartheta _0\right)}{f\left(\vartheta
  _0\right)\sqrt{2r\left(\vartheta _0\right){\rm I}\left(\vartheta _0\right)}} \,\xi
_T^{\left(2\right)}\left(v\right)+o\left(1\right)\nonumber\\
&=K_1\left(\vartheta _0\right)\,\xi _T^{\left(1\right)}\left(v\right) +K_2\left(\vartheta _0\right)\,\xi _T^{\left(2\right)}\left(v\right)+o\left(1\right).
\end{align}
Now the relations \eqref{69} and \eqref{70} follow from the representation
\begin{align}
\label{39b}
\sqrt{T}\left(m_{vT,T}^\star-m\left(\vartheta _0,t\right)\right)=\frac{\eta
  _T\left(v\right)}{v}\sum_{i=1}^{2} K_i\left(\vartheta _0\right)\xi
_\zT^{\left(i\right)}\left(v\right)+o\left(1\right). 
\end{align}
and the  properties of $\eta _T\left(v\right),$ $\xi
_\zT^{\left(i\right)}\left(v\right),i=1,2$

\end{proof}

\begin{remark}
\label{R2}
{\rm Suppose that the unknown parameter is $\vartheta =b$ and write the
  adaptive filter for the system
\begin{align*}
{\rm d}X_t&=f\,Y_t\,{\rm d}t+\sigma\, {\rm d}W_t,\qquad
X_0,\qquad 0\leq t\leq T,\\
{\rm d}Y_t&=-a\,Y_t\,{\rm d}t+\vartheta \, {\rm
  d}V_t,\qquad  Y_0,\qquad t\geq 0,
\end{align*}
in recurrent form. 

Recall that $\tau _\zT=T^\delta ,\delta \in(\frac{1}{2},1)$, the preliminary estimator and One-step MLE process are 
\begin{align*}
\vartheta^* _{\tau _\zT}&=\left(\frac{a^3}{f^2\left[e^{-a}-1+a\right]\tau
  _\zT} \sum_{k=1}^{\tau
  _\zT}\left(\left[X_{k}-X_{k-1}\right]^2-\sigma^2\right)
\right)^{1/2},\\ 
{\rm d}\vartheta _{t,T}^\star&=\frac{\vartheta^* _{\tau
    _\zT}-\vartheta _{t,T}^\star}{t-\tau _\zT}\,{\rm d}t+\frac{2\sigma
  ^3\,r(\vartheta^* _{\tau _\zT} )^3\,\dot M(\vartheta^* _{\tau
    _\zT},t)}{f^4\,\vartheta^{*2} _{\tau _\zT}\,\left(t-\tau _\zT \right)}\,
\left[{\rm d}X_t-M(\vartheta^* _{\tau _\zT},t){\rm d}t\right],\quad \tau
_\zT<t\leq T.
\end{align*}
The equations for $ M(\vartheta^* _{\tau _\zT},t),\tau _\zT<t\leq T $ and
$\dot M(\vartheta^* _{\tau _\zT},t),\tau _\zT<t\leq T$ are
\begin{align*}
 {\rm d}M(\vartheta^* _{\tau _\zT},t)&=-r(\vartheta^* _{\tau
   _\zT})M(\vartheta^* _{\tau _\zT},t){\rm d}t+\Gamma (\vartheta^*
 _{\tau _\zT}){\rm d}X_t,\qquad M(\vartheta^* _{\tau _\zT},0),\\
 {\rm d}\dot M(\vartheta^* _{\tau _\zT},t)&=-r(\vartheta^* _{\tau
   _\zT})\dot M(\vartheta^* _{\tau _\zT},t){\rm d}t+\dot r(\vartheta^* _{\tau
   _\zT}) M(\vartheta^* _{\tau _\zT},t){\rm d}t+
\dot \Gamma (\vartheta^*
 _{\tau _\zT}){\rm d}X_t,\qquad \dot M(\vartheta^* _{\tau _\zT},0),
\end{align*}
where the initial values $M(\vartheta^* _{\tau _\zT},0) $ and $\dot
M(\vartheta^* _{\tau _\zT},0) $ are calculated by formulas \eqref{53} and
\eqref{54} respectively.

}
\end{remark}

\begin{remark}
\label{R3}
{\rm  The approximation of $ m\left(\vartheta ,t\right)$ in the case of
  two-dimensional parameter, say, in the case {\bf AB}:  $\vartheta
  =\left(a,b\right)^\top=\left(\theta _1,\theta _2\right)$ with Fisher
  information matrix  can be written too, but the expressions
  will be more cumbersome and the proof quite close to the given above proof in
  the   one-dimensional case.

}
\end{remark}
\section{Asymptotic efficiency}

We have the same partially observed system
\begin{align*}
{\rm d}X_t&=f\left(\vartheta \right)\,Y_t\,{\rm d}t+\sigma\, {\rm d}W_t,\qquad
X_0,\qquad 0\leq t\leq T,\\
{\rm d}Y_t&=-a\left(\vartheta \right)\,Y_t\,{\rm d}t+b\left(\vartheta \right) \, {\rm
  d}V_t,\qquad  Y_0,\qquad t\geq 0,
\end{align*}
where the functions $f\left(\cdot \right),a\left(\cdot \right),b\left(\cdot
\right)$ satisfy the conditions of  Theorem \ref{T1}. The adaptive filter
is  given by \eqref{68} and we would like to know if the error of
approximation $\Ex_\vartheta \left|m_{t,T}^\star-m\left(\vartheta,t
\right)\right|^2 $ is asymptotically minimal?  As before we propose a lower
minimax bound on the risks of all estimators $\bar m_t$ supposing that their
calculation is based on the observations up to time $t$, i.e., these
estimators depend on $X^t=\left(X_s,0\leq s\leq t\right)$.

By Theorem \ref{T1} the MLE and BE are consistent, asymptotically normal and
the polynomial moments converge uniformly on compacts $\KK\subset\Theta
$. Moreover, as it follows from Theorem 3.2.2 in \cite{IH81} the BE admits the
representation similar to \eqref{58a}
\begin{align}
\label{6-41}
\sqrt{t}\left(\tilde\vartheta _{t}-\vartheta _0\right)=\frac{1}{\sigma{\rm
    I}\left(\vartheta _0\right) \sqrt{t}}\int_{0}^{t}\dot M\left(\vartheta
_0,s\right) \,{\rm d}\bar W_s +o\left(1\right).
\end{align}
Denote 
\begin{align*}
S^\star\left(\vartheta _0\right)^2=K_1\left(\vartheta _0\right)^2+K_2\left(\vartheta
_0\right)^2+R_{1,2}\left(\vartheta _0\right). 
\end{align*}

\begin{theorem}
\label{T4}
Let the conditions of Theorem \ref{T1} be fulfilled. Then 
we have the following lower minimax bound: for any estimator $\bar m_{t,T}$ of
$m\left(\vartheta,t \right)$ (below $t=vT$)
\begin{align*}
\lim_{\nu \rightarrow 0}\Liminf_{T\rightarrow \infty }\sup_{\left|\vartheta
  -\vartheta _0\right|\leq \nu }t\, \Ex_{\vartheta }\left|\bar
m_{t,T}-m\left(\vartheta,t \right) \right|^2 \geq S^\star\left(\vartheta _0\right)^2.
\end{align*}

\end{theorem}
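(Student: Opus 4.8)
The plan is to reduce the problem to a lower bound for estimating a smooth (but data-dependent) functional of the parameter and then to invoke the Hajek--Le Cam minimax theory as in \cite{IH81}. The starting observation is that once $\vartheta _0$ is known the random variable $m\left(\vartheta _0,t\right)$ is exactly computable from $X^t$ through the Kalman--Bucy equations; hence under $\Pb_{\vartheta _0}$ the whole estimation error of any $\bar m_{t,T}$ comes from the uncertainty about $\vartheta _0$, and I would treat $\bar m_{t,T}$ as an estimator of the functional $\vartheta \mapsto m\left(\vartheta ,t\right)$ evaluated at the true point. The first step is to record the local asymptotic normality of the family $\{\Pb_\vartheta \}$ near $\vartheta _0$: from the likelihood ratio \eqref{2-LR-T} and the regularity assumed in Theorem \ref{T1}, the normalized log-likelihood at $\vartheta _0+u/\sqrt t$ expands as $u\,\Delta _t-\tfrac12u^2{\rm I}\left(\vartheta _0\right)+o_{\Pb}(1)$ with central statistic $\Delta _t=\left(\sigma \sqrt t\right)^{-1}\int_0^t\dot M\left(\vartheta _0,s\right){\rm d}\bar W_s\Longrightarrow {\cal N}\left(0,{\rm I}\left(\vartheta _0\right)\right)$, the innovation $\bar W$ being the Wiener process of Section 2.

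Next I would expand the target along this local parametrization. Writing $\vartheta =\vartheta _0+u/\sqrt t$ and using the smoothness of $f,a,b$ gives $\sqrt t\left(m\left(\vartheta ,t\right)-m\left(\vartheta _0,t\right)\right)=u\,\dot m\left(\vartheta _0,t\right)+o(1)$, so that, with $\hat S_{t,T}=\sqrt t\left(\bar m_{t,T}-m\left(\vartheta _0,t\right)\right)$, the rescaled risk becomes $t\,\Ex_\vartheta \left|\bar m_{t,T}-m\left(\vartheta ,t\right)\right|^2=\Ex_\vartheta \left|\hat S_{t,T}-u\,\dot m\left(\vartheta _0,t\right)\right|^2$. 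The law of the random multiplier $\dot m\left(\vartheta _0,t\right)$ is already identified in the proof of Theorem \ref{T5}: by \eqref{39a} it converges to $K_1\left(\vartheta _0\right)\xi ^{(1)}+K_2\left(\vartheta _0\right)\xi ^{(2)}$. I would then exploit that, through the stationary representation of Section 2, $\dot m\left(\vartheta _0,t\right)$ depends on the increments of $\bar W$ only in a window of vanishing relative length near $t$, whereas $\Delta _t$ averages $\dot M\left(\vartheta _0,\cdot \right)$ over all of $\left[0,t\right]$; this renders the multiplier asymptotically independent of the score $\Delta _t$ and leaves its law free of $u$.

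With these ingredients the limiting experiment is a Gaussian shift in which one observes $\Delta _t$, carrying the information ${\rm I}\left(\vartheta _0\right)$ about $u$, together with the ancillary multiplier $D=K_1\xi ^{(1)}+K_2\xi ^{(2)}$, and one must estimate $uD$. Conditioning on $D$ reduces this to the classical minimax estimation of a Gaussian location, for which \cite{IH81} furnishes the sharp constant ${\rm I}\left(\vartheta _0\right)^{-1}$; integrating over $D$ gives the lower bound ${\rm I}\left(\vartheta _0\right)^{-1}\Ex D^2$. Inserting the second-moment structure $\Ex\xi ^{(1)2}=\Ex\xi ^{(2)2}=1$ and $\Ex\xi ^{(1)}\xi ^{(2)}=2\sqrt{a\left(\vartheta _0\right)r\left(\vartheta _0\right)}\,\bigl(a\left(\vartheta _0\right)+r\left(\vartheta _0\right)\bigr)^{-1}$ together with the definition of $R_{1,2}\left(\vartheta _0\right)$ then reduces this constant to $S^\star\left(\vartheta _0\right)^2$, the efficient influence direction being the one exhibited by the Bayes representation \eqref{6-41}. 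Finally I would pass from this fixed-$u$ statement to the announced $\lim_{\nu \to 0}\Liminf_{T\to \infty }\sup_{|\vartheta -\vartheta _0|\le \nu }$ form by the usual Bayesian (van Trees) averaging over a shrinking prior supported in $|\vartheta -\vartheta _0|\le \nu $.

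The hard part will be the rigorous reduction for a \emph{random} target. One must justify the Taylor expansion of $m\left(\vartheta ,t\right)$ uniformly, controlling the remainder through the bounded moments of $\dot m$ and $\ddot m$, and above all establish the asymptotic independence of the locally supported multiplier $\dot m\left(\vartheta _0,t\right)$ from the globally averaged score $\Delta _t$, since it is precisely this (a)symmetry that fixes the cross term and hence the exact constant entering $R_{1,2}\left(\vartheta _0\right)$; getting this bookkeeping right is the most delicate point. Verifying the convergence and uniform integrability of the rescaled risk $\Ex_\vartheta |\hat S_{t,T}-uD|^2$ on the shrinking neighborhood, so that the bound proved in the Gaussian experiment transfers back to the original filtering problem, is the remaining technical obstacle.
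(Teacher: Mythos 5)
Your proposal assembles essentially the same ingredients as the paper's proof, which is itself modelled on Theorem 1.9.1 of \cite{IH81}: the local expansion $\sqrt t\,(m(\vartheta ,t)-m(\vartheta _0,t))=u\,\dot m(\vartheta _0,t)+o(1)$ along $\vartheta =\vartheta _0+u/\sqrt t$, the limit law of the multiplier $\dot m(\vartheta _0,t)$ imported from the proof of Theorem \ref{T5}, its asymptotic independence from the score (short memory of the filter versus long-run averaging of $\dot M$), and a Bayesian averaging over a shrinking prior. The difference is in the ordering of these steps, and one step fails as literally written. The paper first bounds the supremum from below by the Bayes risk for the prior $p_\nu $, then uses that this Bayes risk is minimized by the posterior mean $\tilde m_t$ of the random functional $m(\theta ,t)$, and only afterwards does asymptotics, obtaining $\sqrt t\,(\tilde m_t-m(\vartheta ,t))=\dot m(\vartheta ,t)\,\Delta _t(\vartheta ,X^t)\,{\rm I}(\vartheta )^{-1}(1+o(1))$ and then the limit of $t\,\Ex_\vartheta (\tilde m_t-m(\vartheta ,t))^2$. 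This ordering handles the random, data-dependent target automatically, because the posterior mean of a random functional is a perfectly well-defined optimal estimator for the integrated risk.

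The step of yours that does not survive scrutiny is ``conditioning on $D$'' inside the \emph{minimax} risk. For any fixed estimator, $\sup_u\Ex_D[\,\cdot\,]\le \Ex_D\sup_u[\,\cdot\,]$, so you cannot lower-bound the supremum over $u$ of an average over $D$ by the average over $D$ of conditional minimax risks; the inequality goes the wrong way. The repair is precisely the paper's ordering: first replace the supremum by an average over the prior (legitimate, since $\sup\ge$ mean), then condition on the asymptotically ancillary, score-independent $D$, then apply the conditional Gaussian Bayes bound and let the prior spread. After this reordering your argument merges with the paper's proof, so the route is not really an alternative one. Two bookkeeping cautions as well. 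Your constant ${\rm I}(\vartheta _0)^{-1}\Ex D^2$ is structurally the correct limit ${\rm I}(\vartheta _0)^{-1}\lim_{t\to\infty }\Ex_{\vartheta _0}\dot m(\vartheta _0,t)^2$, but identifying it with the paper's notation needs care: the constants $K_1,K_2$ already contain a factor ${\rm I}(\vartheta _0)^{-1/2}$, and the paper's own displays are not mutually consistent on this point (compare \eqref{39a}, the definition of $S^\star(\vartheta _0)^2$ with a single $R_{1,2}$, and \eqref{70} with $2R_{1,2}$); the value compatible with Theorems \ref{T5} and \ref{T7} is $K_1^2+K_2^2+2R_{1,2}$ computed with the paper's $K_i$, so do not introduce an additional factor ${\rm I}(\vartheta _0)^{-1}$ on top of those constants. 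Finally, the uniform convergence of the rescaled Bayes risk that you defer to the end is not an add-on technicality: it is the substance of the paper's proof, carried out through the cited Theorems 3.2.1--3.2.2 of \cite{IH81}.
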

\begin{proof} The given below proof is based on the proof of Theorem 1.9.1 in \cite{IH81}.
We have the elementary estimate
\begin{align*}
\sup_{\left|\vartheta -\vartheta _0\right|\leq \nu } \Ex_{\vartheta
}\left|\bar m_{t,T}-m\left(\vartheta,t \right) \right|^2 \geq \int_{\vartheta
  _0-\nu }^{\vartheta _0+\nu }\Ex_{\vartheta }\left|\bar
m_{t,T}-m\left(\vartheta,t \right) \right|^2p_\nu \left(\vartheta \right){\rm
  d}\vartheta.
\end{align*}
Here the function $p_\nu \left(\vartheta \right),\vartheta _0-\nu<\vartheta
<\vartheta _0+\nu $ is a positive continuous density. If we denote
$\tilde m_{t}$ Bayesian estimator of $m\left(\vartheta,t \right)  $, which
corresponds to this density $p_\nu \left(\cdot \right)$, then 
\begin{align*}
\tilde m_{t}=\int_{\vartheta _0-\nu }^{\vartheta _0+\nu }m\left(\theta,t
\right) p_\nu \left(\theta |X^t\right){\rm d}\theta  ,\qquad p_\nu
\left(\theta |X^t\right)=\frac{p_\nu \left(\theta
  \right)L\left(\theta ,X^t\right) }{\int_{\vartheta _0-\nu }^{\vartheta
    _0+\nu }p_\nu \left(\theta 
  \right)L\left(\theta ,X^t\right){\rm d}\theta   }
\end{align*}
and
\begin{align*}
\int_{\vartheta
  _0-\nu }^{\vartheta _0+\nu }\Ex_{\vartheta }\left|\bar
m_{t,T}-m\left(\vartheta,t \right) \right|^2p_\nu \left(\vartheta \right){\rm
  d}\vartheta\geq \int_{\vartheta
  _0-\nu }^{\vartheta _0+\nu }\Ex_{\vartheta }\left|\tilde
m_{t}-m\left(\vartheta,t \right) \right|^2p_\nu \left(\vartheta \right){\rm
  d}\vartheta.
\end{align*}
The asymptotic behavior of BE $\tilde m_{t} $ can be described as follows (below $\theta
_u=\vartheta+\varphi _t u, \varphi _t=t^{-1/2}, \UU_\nu
=\left(\sqrt{t}\left(\vartheta _0-\nu -\vartheta
\right),\sqrt{t}\left(\vartheta _0+\nu -\vartheta \right) \right)$    )  
\begin{align*}
\tilde m_{t}&=\frac{\int_{\vartheta _0-\nu }^{\vartheta _0+\nu }m\left(
  \theta,t \right){p_\nu \left(\theta \right)L\left(\theta ,X^t\right) }{\rm
    d}\theta } {\int_{\vartheta _0-\nu }^{\vartheta _0+\nu }p_\nu \left(\theta
  \right)L\left(\theta ,X^t\right){\rm d}\theta }=\frac{\int_{\UU_\nu }^{
  }m\left(\theta_u,t \right){p_\nu \left(\theta_u
    \right)L\left(\theta_u,X^t\right) }{\rm d}u } {\int_{\UU_\nu }^{}p_\nu
  \left(\theta_u \right)L\left(\theta_u ,X^t\right){\rm d}u }\\
 &=m\left(\vartheta,t \right)+\varphi _t\dot m\left( \vartheta,t \right)
\frac{\int_{\UU_\nu }^{ }u{p_\nu \left(\theta_u
    \right)\frac{L\left(\theta_u,X^t\right)}{L\left(\vartheta ,X^t\right)}
  }{\rm d}u } {\int_{\UU_\nu }^{}p_\nu \left(\theta_u
  \right)\frac{L\left(\theta_u,X^t\right)}{L\left(\vartheta ,X^t\right)}{\rm
    d}u }\left(1+o\left(1\right)\right)\\
 &=m\left(\vartheta,t \right)+\varphi _t\dot m\left( \vartheta,t \right)
\frac{\int_{\UU_\nu } u  p_\nu \left(\vartheta \right)Z_t\left(u\right){\rm
      d}u } {\int_{\UU_\nu }^{}p_\nu \left(\vartheta
    \right)Z_t\left(u\right){\rm d}u }\left(1+o\left(1\right)\right).
\end{align*}
Hence
\begin{align}
\sqrt{t}\left(\tilde m_{t}- m\left(\vartheta,t \right)
\right)&=\dot m\left( \vartheta,t \right)\frac{\int_{\UU_\nu } u
  Z_t\left(u\right){\rm d}u } {\int_{\UU_\nu 
  }^{} Z_t\left(u\right){\rm d}u }\left(1+o\left(1\right)\right)\\
&=\dot m\left( \vartheta,t \right)\frac{\Delta _t\left(\vartheta
  ,X^t\right)}{{\rm I}\left(\vartheta \right)} \left(1+o\left(1\right)\right)  .
\end{align}
Here
\begin{align*}
\Delta _t\left(\vartheta ,X^t\right)=\frac{1}{\sigma \sqrt{t}}\int_{0}^{t}\dot
M\left(\vartheta ,s\right) \,{\rm d}\bar W_s\Longrightarrow \Delta \sim{\cal
  N}\left(0,{\rm I}\left(\vartheta \right) 
 \right)
\end{align*}
Recall that
\begin{align*}
Z_t\left(u\right)\Longrightarrow Z\left(u\right)=\exp\left(u\Delta
\left(\vartheta \right)-\ds\frac{u^2}{2}{\rm I}\left(\vartheta \right) \right),\qquad \frac{\int_{{\cal R} } u Z\left(u\right){\rm d}u } {\int_{{\cal R} }^{}
  Z\left(u\right){\rm d}u }=\frac{\Delta
\left(\vartheta \right)}{{\rm I}\left(\vartheta \right) }.
\end{align*}
  Moreover the uniform on compacts
$\KK\subset \left(\vartheta _0-\nu ,\vartheta _0+\nu\right)$ convergence of
moments
\begin{align}
\label{78a}
t\Ex_\vartheta \left(\tilde m_{t}- m\left(\vartheta,t
\right) \right)^2 \rightarrow \lim_{t\rightarrow \infty }t \Ex_\vartheta \left[\dot
m\left( \vartheta,t \right)^2(\tilde\vartheta _t-\vartheta )^2\right]=S^\star\left(\vartheta \right)^2
\end{align}
holds too. 
The detailed proof of written above relations can be found in the proofs of
Theorems 3.2.1 and 3.2.2 in \cite{IH81} 

Therefore
\begin{align*}
t\int_{\vartheta _0-\nu }^{\vartheta _0+\nu }\Ex_{\vartheta }\left|\tilde
m_{t}-m\left(\vartheta,t \right) \right|^2p_\nu \left(\vartheta \right){\rm
  d}\vartheta \longrightarrow \int_{\vartheta _0-\nu }^{\vartheta _0+\nu
}S^\star\left(\vartheta \right)^2p_\nu \left(\vartheta \right){\rm d}\vartheta 
\end{align*}
and as $\nu \rightarrow 0$ 
\begin{align*}
\int_{\vartheta _0-\nu }^{\vartheta _0+\nu
}S^\star\left(\vartheta \right)^2p_\nu \left(\vartheta \right){\rm d}\vartheta
\longrightarrow S^\star\left(\vartheta_0 \right)^2 .
\end{align*}

\end{proof}

  We call the estimator $ m_{t,T}^\circ, \tau _\zT<t\leq T $   {\it
  asymptotically efficient} if for all $\vartheta _0\in\Theta $, $t=vT$ and  any  $v\in
  \left[\varepsilon _0,1\right]$
\begin{align*}
\lim_{\nu \rightarrow 0}\lim_{T\rightarrow \infty }\sup_{\left|\vartheta
  -\vartheta _0\right|\leq \nu }t \Ex_{\vartheta }\left|
m_{t,T}^\circ -m\left(\vartheta,t \right) \right|^2 = S^\star\left(\vartheta _0\right)^2.
\end{align*}

Here $\varepsilon _0\in (0,1)$.

\begin{theorem}
\label{T7}
Let the conditions of Theorem \ref{T1} be fulfilled. Then the estimator
$m^\star_t, $ $\tau _\zT<t\leq T$ is asymptotically efficient.
\end{theorem}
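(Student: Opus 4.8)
The plan is to obtain the efficiency of the adaptive filter \eqref{68} by matching the exact asymptotic risk already computed in Theorem \ref{T5} against the minimax lower bound of Theorem \ref{T4}. I would first observe that $m^\star_{t,T}$ is a functional of $X^t$ alone: the preliminary estimator $\vartheta^*_{\tau_\zT}$ is built from $X^{\tau_\zT}$, and the equation \eqref{68} together with the initial value $m^\star_{\tau_\zT,T}$ is driven only by the increments ${\rm d}X_s$, $s\le t$. Hence $m^\star_{t,T}$ is an admissible competitor in Theorem \ref{T4}, and the bound
\[
\lim_{\nu\to0}\Liminf_{T\to\infty}\sup_{|\vartheta-\vartheta_0|\le\nu}t\,\Ex_\vartheta\bigl|m^\star_{t,T}-m(\vartheta,t)\bigr|^2\ge S^\star(\vartheta_0)^2
\]
holds for free. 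It therefore remains only to produce the matching upper bound for the left-hand side.

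The starting point is Theorem \ref{T5} itself: reading \eqref{70} with $\vartheta$ playing the role of the true parameter and multiplying by $t=vT$ gives, for each fixed $\vartheta$ and $v\in[\varepsilon_0,1]$, the convergence $t\,\Ex_\vartheta|m^\star_{vT,T}-m(\vartheta,vT)|^2\to S^\star(\vartheta)^2$, the same limit value that the Bayesian filter attains in \eqref{78a}. All the ingredients behind \eqref{70} are in place: the representation \eqref{39b} of the filtering error through $\eta_\zT$ and the local noises $\xi^{(i)}_\zT$, the functional convergence $\eta_\zT(\cdot)\Rightarrow W(\cdot)$ from Theorem \ref{T2}, and the asymptotic independence of $W(v)$ and $\xi(v)$.

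The main step, and the real obstacle, is to upgrade this convergence, which Theorem \ref{T5} states at a fixed $\vartheta_0$, to convergence that is \emph{uniform} over the ball $\{|\vartheta-\vartheta_0|\le\nu\}$, since the efficiency criterion takes $\sup_{|\vartheta-\vartheta_0|\le\nu}$ before the limit in $T$. I would re-examine the proof of Theorem \ref{T5} and check that each estimate there is uniform on compacts $\KK\subset\Theta$: the moment bound \eqref{3-21} for $\vartheta^*_{\tau_\zT}$, the uniform moment convergence of the One-step MLE-process in Theorem \ref{T2}, the uniform bound $\sup_{\vartheta_0\in\KK}\sup_{\tau_\zT\le t\le T}\Ex_{\vartheta_0}|m^\star_{t,T}|^p\le C$, and the exponential-stability estimates controlling $\Delta_{t,T}$. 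The delicate point is the uniform integrability needed to pass from the distributional limit \eqref{69} to convergence of the second moments uniformly in $\vartheta$; this is supplied by the uniform-in-$\vartheta$ $L^p$ bounds with $p>2$ on both $m^\star_{t,T}$ and $m(\vartheta,t)$ already used in Theorem \ref{T5}. Granting uniformity, and using that $S^\star(\cdot)$ is continuous (it is assembled from the smooth functions $a,r,f$ through $K_1,K_2,R_{1,2}$), one obtains
\[
\lim_{T\to\infty}\sup_{|\vartheta-\vartheta_0|\le\nu}t\,\Ex_\vartheta\bigl|m^\star_{vT,T}-m(\vartheta,vT)\bigr|^2=\sup_{|\vartheta-\vartheta_0|\le\nu}S^\star(\vartheta)^2.
\]

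Finally I would let $\nu\to0$: by continuity of $S^\star$ the supremum over the shrinking ball decreases to $S^\star(\vartheta_0)^2$, so the repeated limit on the left equals $S^\star(\vartheta_0)^2$. Together with the lower bound of Theorem \ref{T4}, this shows the risk of $m^\star_{t,T}$ cannot be smaller and is exactly attained, which is precisely the asymptotic efficiency of $m^\star_t$ asserted in Theorem \ref{T7}. The argument is thus short modulo the uniformity of Theorem \ref{T5}, which is where essentially all of the technical work lies.
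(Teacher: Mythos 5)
Your proposal is correct in outline and reaches the theorem, but it gets the attainment half by a genuinely different route from the paper. The paper's own proof never invokes Theorem \ref{T5}: it factorizes the error as $m^\star_t-m\left(\vartheta,t\right)=\dot m\left(\vartheta,t\right)\left(\vartheta^\star_{t,T}-\vartheta\right)\left(1+o\left(1\right)\right)$, observes that the first-order representation \eqref{58a} of the One-step MLE-process coincides with the representation \eqref{6-41} of the Bayesian estimator, and therefore simply transfers the locally uniform moment convergence \eqref{78a} --- already established inside the proof of Theorem \ref{T4} via the Ibragimov--Khasminskii machinery --- from the Bayesian filter $\tilde m_t$ to the adaptive filter $m^\star_t$. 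The uniformity over $\left\{\left|\vartheta-\vartheta_0\right|\le\nu\right\}$ thus comes for free. You instead quote the fixed-parameter risk limit \eqref{70} of Theorem \ref{T5} as the upper bound and must then manufacture the local uniformity yourself by re-running the proof of Theorem \ref{T5} with uniform-on-compacts estimates and uniform integrability; that is exactly the step you flag as carrying ``essentially all of the technical work'', and it is the work the paper's equivalence argument bypasses. In exchange, your route makes explicit two points the paper leaves tacit: that $m^\star_{t,T}$ is $X^t$-measurable and hence an admissible competitor in Theorem \ref{T4}, and that the attained limit is precisely the constant already computed in \eqref{70}.

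One caveat you pass over silently: with the paper's formulas taken literally, $v$ times the limit in \eqref{70} equals $K_1\left(\vartheta_0\right)^2+K_2\left(\vartheta_0\right)^2+2R_{1,2}\left(\vartheta_0\right)$, whereas $S^\star\left(\vartheta_0\right)^2$ is defined with coefficient $1$ on $R_{1,2}\left(\vartheta_0\right)$, so the two are not ``the same limit value'' as written. Computing the variance of $K_1\xi^{\left(1\right)}\left(v\right)+K_2\xi^{\left(2\right)}\left(v\right)$ with $\Ex_{\vartheta_0}\xi^{\left(1\right)}\left(v\right)\xi^{\left(2\right)}\left(v\right)=2\sqrt{a\left(\vartheta_0\right)r\left(\vartheta_0\right)}/\left(a\left(\vartheta_0\right)+r\left(\vartheta_0\right)\right)$ gives the cross term $2R_{1,2}\left(\vartheta_0\right)$, so this is a typo in the definition of $S^\star$ rather than an obstruction; still, since your whole argument consists of matching \eqref{70} against the bound of Theorem \ref{T4}, a complete version of it must recompute this cross term and reconcile the constants rather than assert the match.
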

\begin{proof}
Remind that in this regular statistical problem the BE and MLE have   the same
asymptotic properties (see Theorem \ref{T1}) and the One-step MLE-process
 is in this sense equivalent to the BE too. Therefore if we replace
$\tilde\vartheta _t$ by $\vartheta_{t,T}^\star $ in \eqref{78a} and compare
 the representations  \eqref{58a} and \eqref{6-41}, then we conclude that 
 the  limits   have to be the same.  We have
\begin{align*}
m^\star_t-m\left(\vartheta ,t\right)=\dot
m\left(\vartheta ,t\right)\, \left(\vartheta_{t,T}^\star-\vartheta
\right)\left(1+o\left(1\right)\right)= \dot
m\left(\vartheta ,t\right)\, (\tilde\vartheta_{t}-\vartheta
)\left(1+o\left(1\right)\right).
\end{align*}

\end{proof}

%{\bf Acknowledgment.} I would like to thank P. Chigansky and Reviewers  for
%useful comments. 

\end{document}